\renewcommand{\leq}{\leqslant}
\renewcommand{\geq}{\geqslant}
\newcommand{\colonequal}{\mathrel{\mathop:}=}
\newcommand{\N}{\mathbbm N}
\newcommand{\Z}{\mathbbm Z}
\newcommand{\Q}{\mathbbm Q}
\newtheorem{thm}{Theorem} 
\newtheorem{prop}[thm]{Proposition}
\newtheorem{lem}[thm]{Lemma} 
\newtheorem{cor}[thm]{Corollary}
\newtheorem{conj}[thm]{Conjecture}
\newtheorem{defn}[thm]{Definition}
\numberwithin{equation}{section} 
\begin{document}


\title{A Curious Family of Binomial Determinants That Count Rhombus Tilings of a Holey Hexagon}

\author{Christoph~Koutschan\thanks{Supported by the Austrian Science Fund (FWF): P29467-N32 and F5011-N15.}
        \\
        Johann Radon Institute for Computational and\\
        Applied Mathematics (RICAM), Austrian Academy of Sciences\\
        Altenberger Stra\ss e 69, A-4040 Linz, Austria
        \and
        Thotsaporn Thanatipanonda\\
        Science Division, Mahidol University International College,\\
        Nakhonpathom, Thailand 73170
        }
\date{}

\maketitle

\begin{abstract}
We evaluate a curious determinant, first mentioned by George Andrews in 1980
in the context of descending plane partitions. Our strategy is to combine the
famous Desnanot-Jacobi-Dodgson identity with automated proof techniques. More
precisely, we follow the holonomic ansatz that was proposed by Doron
Zeilberger in 2007. We derive a compact and nice formula for Andrews's
determinant, and use it to solve a challenge problem that we posed in a
previous paper. By noting that Andrews's determinant is a special case of a
two-parameter family of determinants, we find closed forms for several
one-parameter subfamilies. The interest in these determinants arises because
they count cyclically symmetric rhombus tilings of a hexagon with several
triangular holes inside.
\end{abstract}


\section{Introduction}\label{sec.intro}

Plane partitions were a hot topic back in the 1970's and 1980's (as
beautifully described in \cite{Bressoud99}), and they still keep
combinatorialists busy. For example, the $q$-enumeration formula of totally
symmetric plane partitions, conjectured independently by David Robbins and
George Andrews in 1983, remained open for almost 30 years and was finally
proved in 2011~\cite{KoutschanKauersZeilberger11} using massive computer algebra
calculations. The problem that we treat in this paper originates around the same
time, when combinatorialists started to employ determinants to reformulate the
counting problem of plane partitions.

The following determinant counts descending plane partitions, and it was
famously evaluated by George Andrews~\cite{Andrews79} in~1979:
\begin{equation}\label{eq.AndDet}
  \det_{1\leq i,j\leq n} \left(\delta_{i,j} + \binom{\mu+i+j-2}{j-1}\right),
\end{equation}
where $\delta_{i,j}$ denotes the Kronecker delta, i.e., $\delta_{i,j}=1$ if
$i=j$ and $\delta_{i,j}=0$ otherwise. The same determinant is also mentioned
in Krattenthaler's classic treatise on
determinants~\cite[Thm.~32]{Krattenthaler99} (where $\mu$ is replaced by
$2\mu$).  One year later, in 1980, Andrews~\cite[page 105]{Andrews80a} came up
with a curious determinant which is a slight variation of the above:
\[
  D(n) := \det_{1 \leq i,j \leq n}\left( \delta_{i,j}+{ \binom{\mu+i+j-2}{j}}\right).
\]
He conjectured a closed-form formula for the quotient $D(2n)/D(2n-1)$.
It was mentioned again (and popularized) as Problem 34 in Krattenthaler's
complement \cite{Krattenthaler05}, and it was proven, for the first
time, by the authors of the present paper in
2013~\cite{KoutschanThanatipanonda13}.

However this proves only ``half'' of the formula for~$D(n)$. The quotient
$D(2n+1)/D(2n)$ remained mysterious, due to an increasingly large ``ugly''
(i.e., irreducible) polynomial factor that is always shared between two
consecutive determinants. Thus the determinant~$D(n)$ does not completely
factor into linear polynomials, while many similar determinants do.  Not fully
satisfied with this situation, the first-named author made a
\textit{monstrous} conjecture~\cite[Conj.~6]{KoutschanThanatipanonda13} of the
full formula for~$D(n)$.  In this paper, we derive and prove a nicer formula
for~$D(n)$ (Section~\ref{sec.nice}) and also show that it is equivalent to our
previous conjecture (Section~\ref{sec.monst}). In order to obtain the nice
formula for~$D(n)$, we have to evaluate some related determinants
(Section~\ref{sec.lemmas}), which we then combine via the
Desnanot-Jacobi-Dodgson identity. In Section~\ref{sec.gen}, we identify these
determinants as special cases of some more general (infinite) families of
determinants and present several theorems and conjectures for their closed
forms.  All of them have a combinatorial meaning, as will be explained in
Section~\ref{sec.comb}. We first introduce the main object of study of
this article, the generalized determinant with shifted corner:
\begin{defn}\label{def.Dst}
For $n,s,t\in\Z$, $n\geq1$, and $\mu$ an indeterminate, we define $D_{s,t}(n)$
to be the following $(n\times n)$-determinant:
\[
  D_{s,t}(n) \colonequal \det_{\genfrac{}{}{0pt}{}{s \leq i < s+n}{t \leq j < t+n}}
  \left( \delta_{ij}+{ \binom{\mu+i+j-2}{j}}\right),  \qquad  n \geq 1.
\]
\end{defn}
Note that Andrews's determinant is a special case of it, namely
$D(n) = D_{1,1}(n)$, and that \eqref{eq.AndDet} equals $D_{0,0}(n)$ after
replacing $\mu$ by $\mu+2$.

\paragraph{Notation.}
We employ the usual notation $(x)_k$ for the Pochhammer symbol (also known as
rising factorial), that is defined as follows:
\[
  (x)_k \colonequal \begin{cases}
    x(x+1)\cdots(x+k-1), & k>0, \\
    1, & k=0, \\
    \frac{1}{(x+k)_{-k}}, & k<0.
  \end{cases}
\]
The short-hand notation $(x)_k^2$ is to be interpreted as
$\bigl((x)_k\bigr)^2$. The double factorial is defined, as usual, as
\[
  n!! \colonequal \begin{cases}
    2\cdot 4\cdots (n-2)\cdot n, & \text{if $n$ is even,} \\
    1\cdot 3\cdots (n-2)\cdot n, & \text{if $n$ is odd.}
  \end{cases}
\]

\section{Combinatorial Background}\label{sec.comb}

Before we go into details about the evaluations of the mentioned determinant
$D_{1,1}(n)$, and more generally $D_{s,t}(n)$, we want to give a combinatorial
interpretation of these determinants, namely we exhibit certain combinatorial
objects (rhombus tilings) that are counted by them.

The determinant~$D_{0,0}(n)$, which is given in~\eqref{eq.AndDet}, was
evaluated by George Andrews~\cite{Andrews79}, because it counts descending
plane partitions.  Christian Krattenthaler~\cite{Krattenthaler06} observed
that it equivalently counts cyclically symmetric rhombus tilings of a hexagon
with a triangular hole, where the size of the hole is related to the
parameter~$\mu$ \cite[Thm.~6]{CiucuEisenkoeblKrattenthalerZare01}.
From this, we deduce that our generalized version can count similar objects.
Throughout this section, we use the transformed parameter $\lambda:=\mu-2$,
which turns out to be more natural in this context (compare also with Andrews'
paper~\cite{Andrews79}).

The first observation is that $D_{s,t}(n)$ can be written as a sum of minors.
For this purpose, we rewrite it by performing index shifts on $i$ and~$j$:
\[
  D_{s,t}(n) = \det_{\genfrac{}{}{0pt}{}{s \leq i < s+n}{t \leq j < t+n}}
  \left( \delta_{ij}+{ \binom{\lambda+i+j}{j}}\right) =
  \det_{\genfrac{}{}{0pt}{}{1 \leq i \leq n}{1 \leq j \leq n}}
  \left( \delta_{i+s-t,j}+{ \binom{\lambda+i+j+s+t-2}{j+t-1}}\right).
\]
For the sake of readability, we abbreviate the latter binomial
coefficient by $b_{i,j}$, and do not denote the dependency on $s$ and~$t$.
Let $i\in\{1,\dots,n\}$ be such that $1\leq i+s-t\leq n$, i.e.\ the $i$-th
row contains one entry where the Kronecker delta evaluates to~$1$, then 
by Laplace expansion with respect to the $i$-th row one obtains
\[
  D_{s,t}(n) =
    \det_{\genfrac{}{}{0pt}{}{1 \leq i \leq n}{1 \leq j \leq n}}\bigl(\delta_{i+s-t,j}+b_{i,j}\bigr) =
    \sum_{j=1}^n (-1)^{j+1} \bigl(\delta_{i+s-t,j}+b_{i,j}\bigr) M^i_j =
    (-1)^{s-t} M^i_{i+s-t} + \sum_{j=1}^n (-1)^{i+j} b_{i,j} M^i_j,
\]
where $M^i_j$ denotes the $(i,j)$-minor of the corresponding matrix.
More generally, for any matrix~$A$, we can write
$\det(A)=\det(A^-)+(-1)^{i+j}M^i_j$, where $A^-$ denotes the matrix $A$ after
subtracting~$1$ from its $(i,j)$-entry. Applying this formula recursively
to the determinant~$D_{s,t}(n)$, until all $1$'s coming from the Kronecker deltas
are eliminated, yields the following identity
\begin{equation}\label{eq.SumOfMinors}
  D_{s,t}(n) =
  \begin{cases}
    \displaystyle\sum_{I\subseteq\{1,\dots,n-s+t\}} (-1)^{(s-t)\cdot|I|} \cdot \det\bigl(B^I_{I+s-t}\bigr),
      & \text{if } s\geq t,\\
    \displaystyle\sum_{I\subseteq\{1,\dots,n-t+s\}} (-1)^{(s-t)\cdot|I|} \cdot \det\bigl(B^{I+t-s}_I\bigr),
      & \text{if } s\leq t,
    \rule{0pt}{20pt}
  \end{cases}
\end{equation}
where $I+x=\{i+x \mid i\in I\}$ and where $B^I_J$ denotes the matrix that is
obtained by deleting all rows with indices in~$I$ and all columns with indices
in~$J$ from the matrix $B_{s,t}(n)=(b_{i,j})_{1\leq i,j\leq n}$. In other
words, we are summing over all subsets of positions where the Kronecker delta
evaluates to~$1$, and for each such subset we add or subtract the corresponding minor
$\det\bigl(B^I_J\bigr)$.

The second observation is that, by the Lindstr\"om--Gessel--Viennot
lemma~\cite{Lindstroem73,GesselViennot85}, $\det\bigl(B_{s,t}(n)\bigr)$ counts
$n$-tuples of non-intersecting paths in the integer lattice~$\N^2$: the start
points are $(\lambda+s,0)$, \mbox{$(\lambda+s+1,0)$}, \dots,
$(\lambda+s+n-1,0)$, the end points are $(0,t),(0,t+1),\dots,(0,t+n-1)$, and
the allowed steps are $(0,1)$ and $(-1,0)$; see Figure~\ref{fig.paths} (left)
for an example. The number of paths starting at $(\lambda+s+i-1,0)$ and ending
at $(0,t+j-1)$ is given by $\binom{\lambda+i+j+s+t-2}{j+t-1}$, which is
precisely the $(i,j)$-entry of~$B_{s,t}(n)$. Note that this counting is only
correct if $\lambda+s\geq0$; in the following we will assume that this
condition is satisfied. We do not know of a combinatorial interpretation when
$\lambda+s<0$.

\begin{figure}
  \begin{center}
    \includegraphics[width=0.5\textwidth]{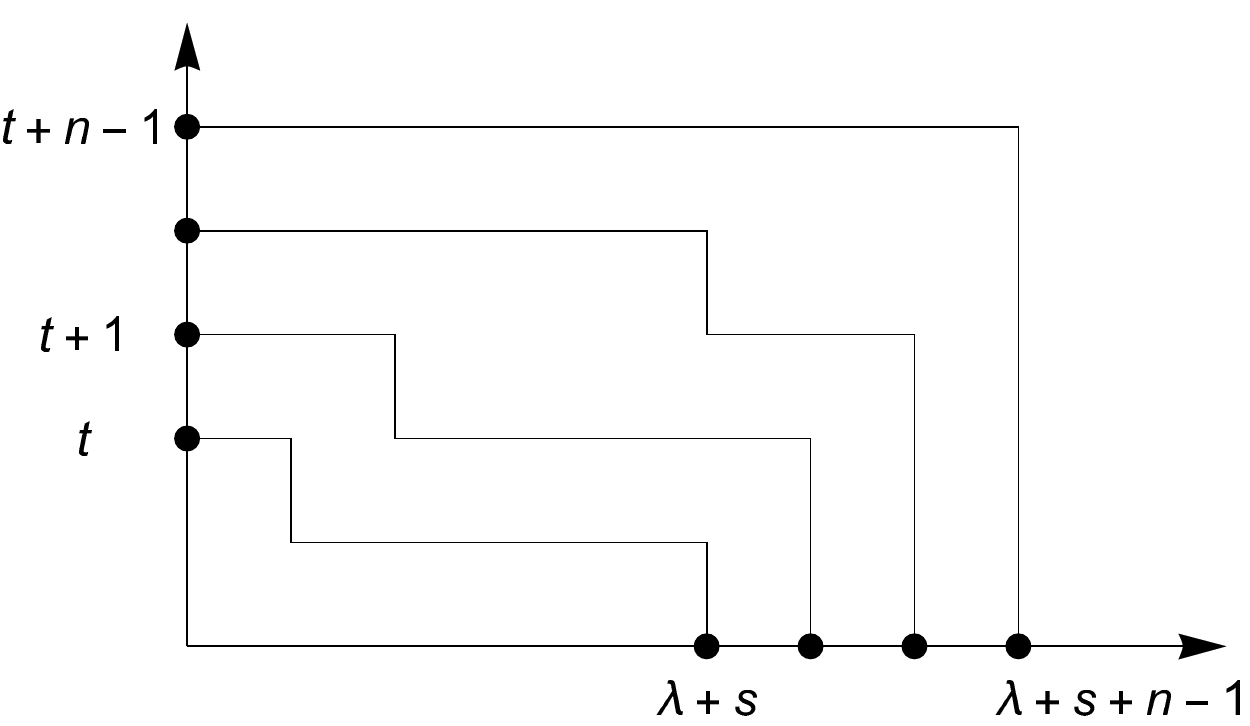}
    \qquad\qquad
    \includegraphics[width=0.3\textwidth]{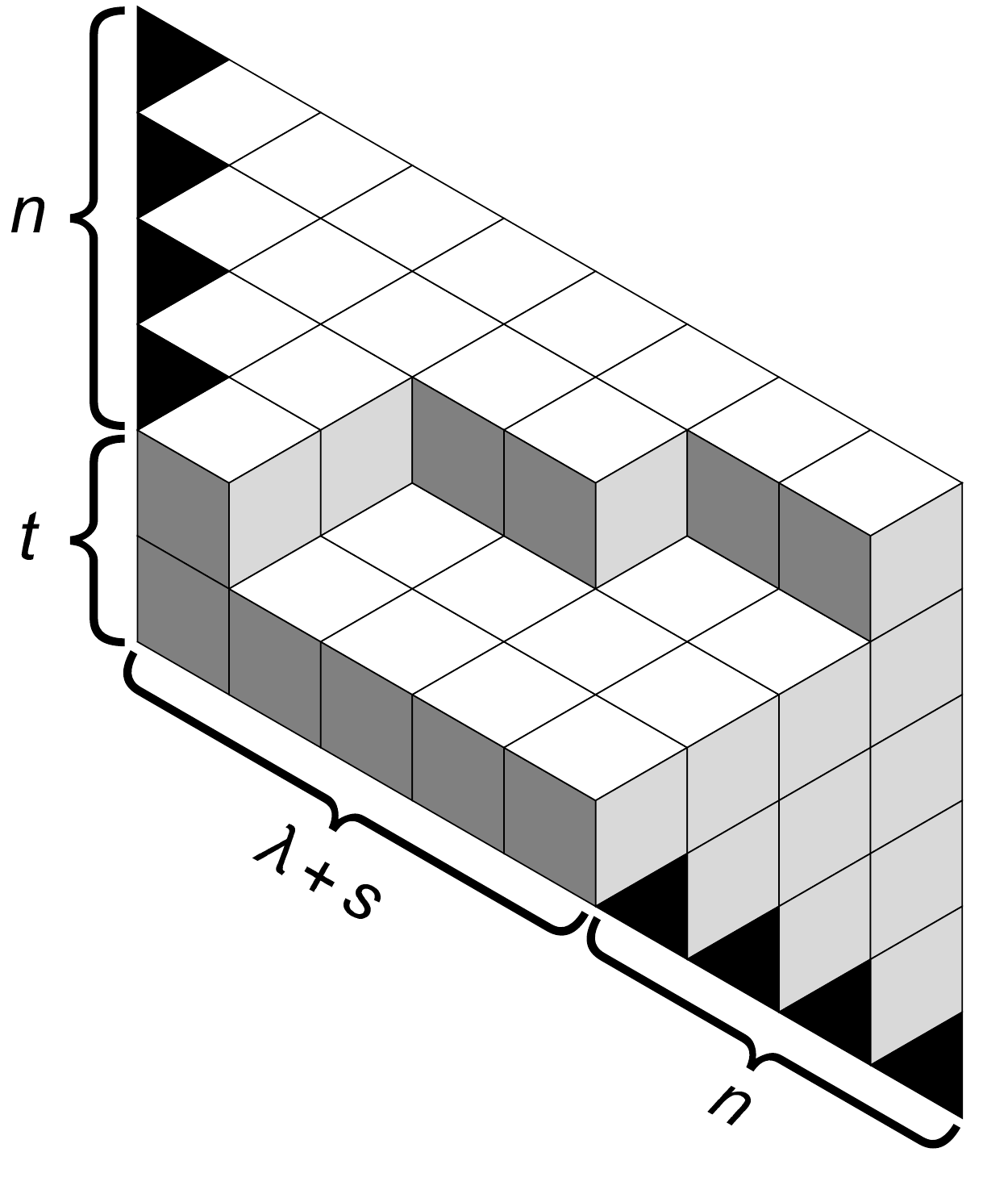}
  \end{center}
  \caption{A tuple of non-intersecting lattice paths (for $n=4$, $t=2$, and
    $\lambda+s=5$), and the corresponding rhombus tiling of a lozenge with some
    missing triangles (black): the white rhombi correspond to left-steps and
    the light-gray rhombi correspond to up-steps.}
  \label{fig.paths}
\end{figure}

If $|I|=|J|$ then $\det\bigl(B^I_J\bigr)$ counts the $(n-|I|)$-tuples of
non-intersecting paths where the start points with indices~$I$ and the end
points with indices~$J$ are omitted. In the case $s=t$, the expression
$\sum_{I\subseteq\{1,\dots,n\}}\det\bigl(B^I_I\bigr)$ counts all tuples of
non-intersecting paths for all subsets of start points (and the same subset
of end points). If $s>t$ then we use $\det\bigl(B^I_{I+s-t}\bigr)$ with
$I\subseteq\{1,\dots,n-s+t\}$. This means that we never omit the last $s-t$
start points on the horizontal axis and we never omit the first $s-t$ end
points on the vertical axis (counted from bottom to top). Moreover, the
omitted start and end points follow the same pattern, shifted by $s-t$.  If
$t>s$ then we never omit the first $t-s$ start points and the last $t-s$
end points.

The third and final observation is that the previously described
non-intersecting lattice paths are in bijection with rhombus tilings of a
lozenge-shaped region, where certain triangles on the border are cut out. They
correspond to the start and end points; see the right part of
Figure~\ref{fig.paths} where these triangles are colored black.  The two
types of steps (left and up) correspond to two orientations of the rhombi
(colored white and light-gray), while rhombi of the third possible
orientation (colored dark-gray) fill the areas which are not covered by
paths. From Figure~\ref{fig.paths} it is apparent that the lozenge has width
$\lambda+n+s$ and height $n+t$, and that $n$ black triangles are placed at the
right end of its lower side and another $n$ black triangles at the top of its
left vertical side.  From the bijection with lattice paths we see that the
number of rhombus tilings of such a lozenge is given by the determinant
$\det\bigl(B_{s,t}(n)\bigr)$.

In order to give a combinatorial interpretation to the determinant
$D_{s,t}(n)$, we have to sum up the counts of many similar tiling problems,
according to the sum of minors~\eqref{eq.SumOfMinors}. More precisely,
label the black triangles on the lower side of the lozenge with numbers from
$1$ to $n$ (from left to right), and similarly those on the vertical side
(from bottom to top). Then $\det\bigl(B^I_J\bigr)$ counts rhombus tilings of
the lozenge where all black triangles on the lower side with labels in~$I$ are
removed, and similarly, all black triangles on the vertical side with labels
in~$J$. Instead of adding up the results of many counting problems, we can
elegantly obtain the same result from a single counting problem, by
introducing cyclically symmetric rhombus tilings of hexagonal regions.

For this purpose, we rotate the lozenge by $120^\circ$ and by $240^\circ$ and
put the three copies together such that corresponding triangles share an
edge. We illustrate this procedure in Figure~\ref{fig.lozenges}: on the left
we show the three copies of the lozenge from Figure~\ref{fig.paths} with
parameters $s=4$, $t=2$, $n=4$, and $\lambda=2$. Since $s-t=2$ we never omit the
last two start points and the first two end points. Therefore, the
corresponding triangles are colored black. The fact that the remaining
start and end points may be omitted, is indicated by lighter colors. The
relation between $I$ and $J=I+s-t$ is visualized by matching colors: for two
triangles of the same color we have that either both are present or both are
omitted. The three copies of the lozenge are glued together such that
triangles of the same color share an edge. Note that this implies that none of
the black triangles will have a partner.

\begin{figure}
  \begin{center}
    \raisebox{20pt}{\includegraphics[width=0.33\textwidth]{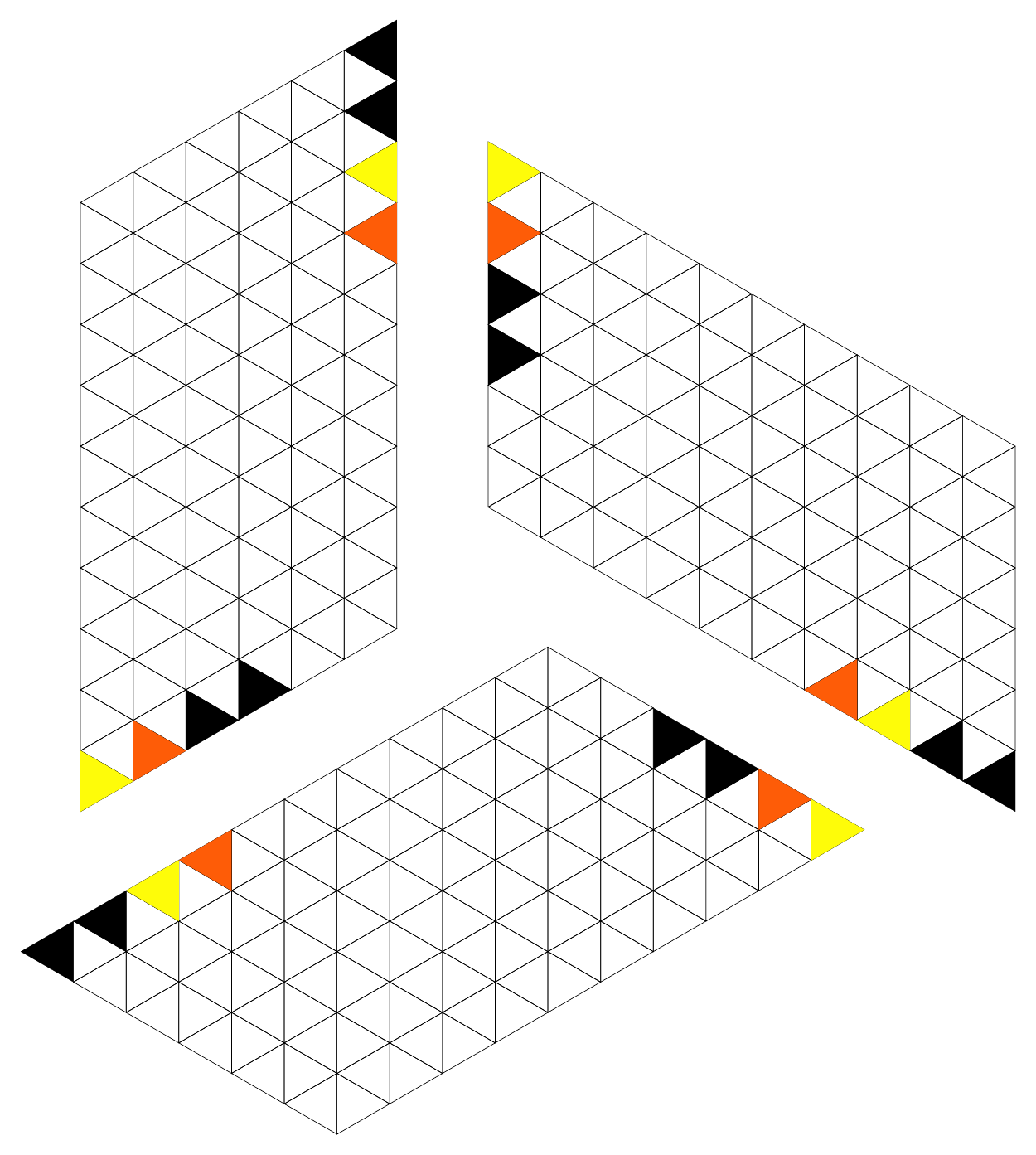}}
    \qquad\qquad
    \includegraphics[width=0.4\textwidth]{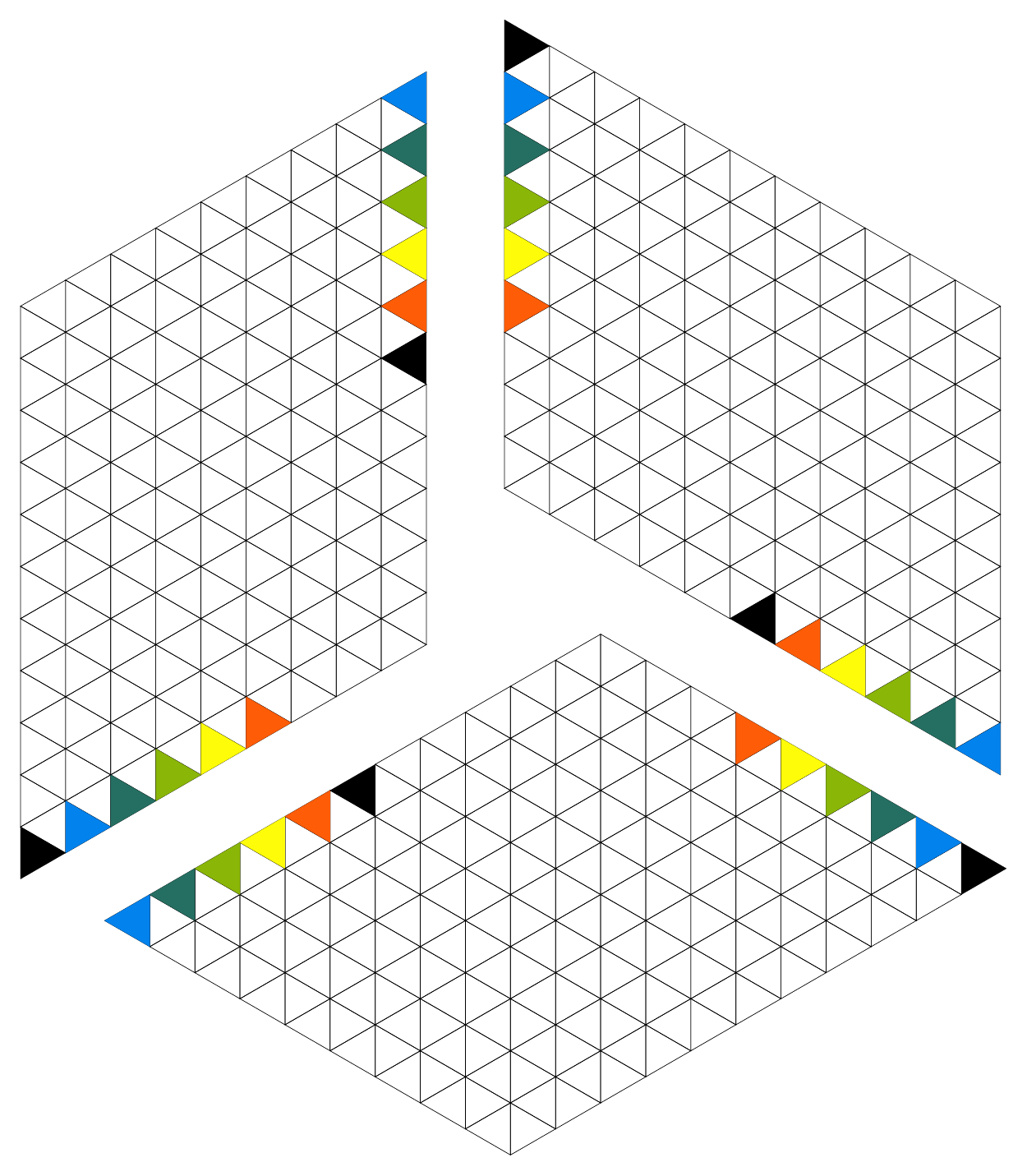}
  \end{center}
  \caption{Gluing together three copies of a lozenge; the left figure
    corresponds to the parameters $s=4$, $t=2$, $n=4$, $\lambda=2$, while the
    right figure has $s=2$, $t=3$, $n=6$, $\lambda=3$}
  \label{fig.lozenges}
\end{figure}

Now we obtain a region that is either a hexagon (if $s=t$) or that otherwise
has the shape of a pinwheel; see Figure~\ref{fig.pinwheel}.  In both cases,
there remains a ``hole'' in the center, except when $\lambda=0$. If
$\lambda\neq0$ then this hole has the shape of an equilateral triangle of side
length $|\lambda|$, pointing to the right if $\lambda>0$ and pointing to the
left if $\lambda<0$. We have to ensure that no rhombus crosses the border of
the original lozenge except for those positions that correspond to the start
and end points of the paths. For this reason, we place a ``border line'' of
length $\min(s,t)$ at each corner of the triangular hole and prohibit any
rhombus to lie across this border. Note that in the case $\lambda<0$ the
vertical border actually starts at the lower vertex of the (left pointing)
triangular hole, so that $\min(s,t,-\lambda)$ unit segments of the border
coincide with the right side of the triangular hole (and similarly for the
other two border lines). Each of these border lines is continued by $|s-t|$
unit triangular holes that point either in clockwise direction (if $s>t$) or
in counter-clockwise direction (if $s<t$). The same number of triangles
appears at the ``wings'' of the pinwheel, at a distance of $n-|s-t|$ from the
end of the border line; these triangles point in the opposite direction.

Since we have now three copies of the original domain, we have to avoid
overcounting: this is done by restricting the count to rhombus tilings that
are cyclically symmetric. At the same time this restriction automatically
ensures that the relation between start and end points is satisfied, namely
that they are distributed in the same manner, only shifted by $|s-t|$,
as described before.

By construction, we have obtained a region whose cyclically symmetric rhombus
tilings are counted by the determinant~$D_{s,t}(n)$, provided that $s-t$ is
even.  If $s-t$ is odd, the count is weighted by $+1$ and $-1$: the sign is
determined by the parity of the number of rhombi crossing the original
vertical side of the lozenge. Recall that the sign comes from
$(-1)^{(s-t)\cdot|I|}$ in~\eqref{eq.SumOfMinors}. The cardinality~$|I|$
corresponds to the number of vertical line segments between the two vertical
strips of black triangles that are ``visible'', i.e., that are not covered by a
horizontal rhombus. In other words: if there is an even number of line
segments that are not crossed by a horizontal rhombus then the count is
weighted with $+1$, otherwise with $-1$. By a ``horizontal rhombus'' we mean
one that is built of two triangles sharing a vertical edge.

\begin{figure}
  \begin{center}
    \includegraphics[width=0.4\textwidth]{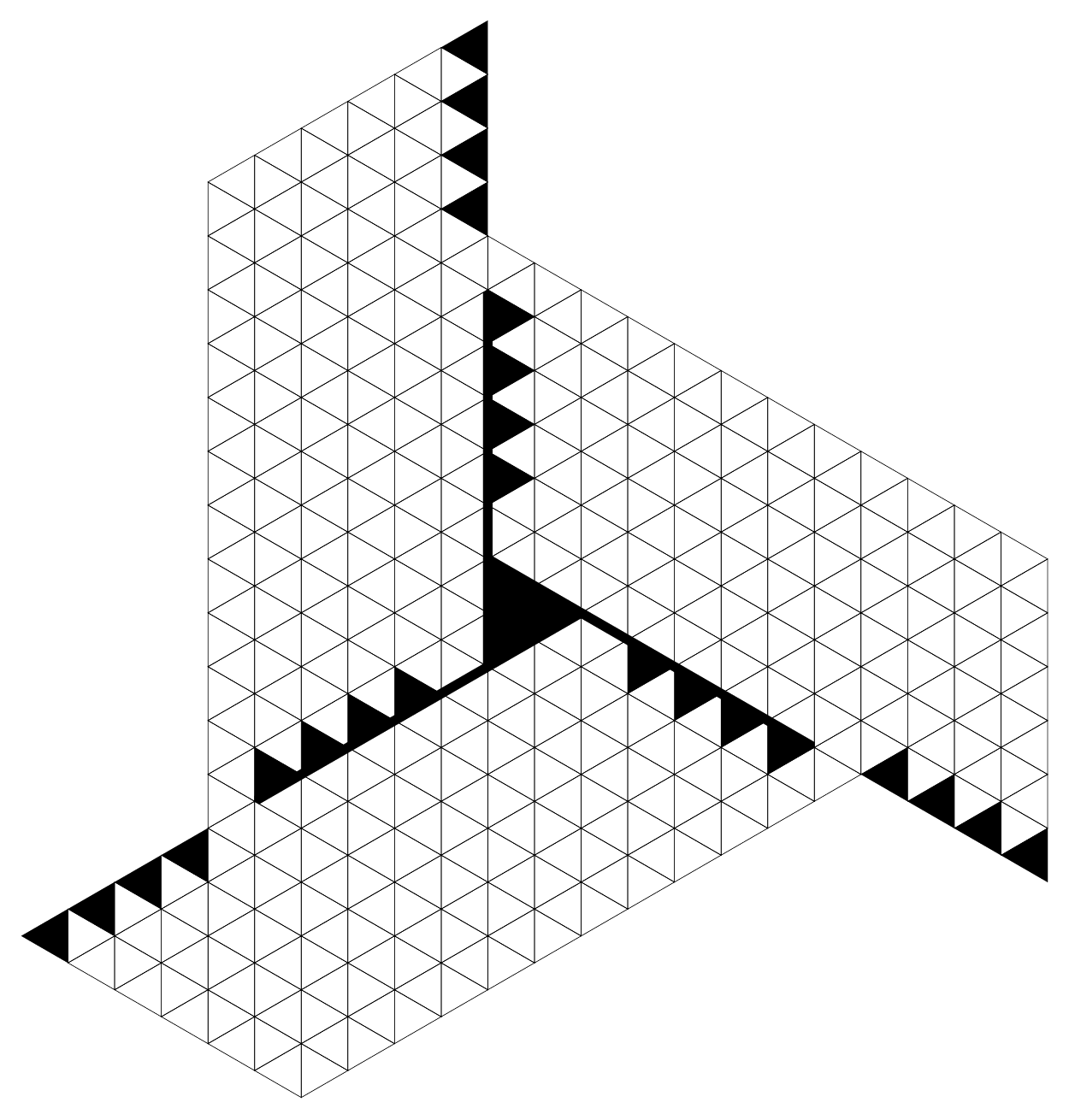}
    \qquad\qquad
    \includegraphics[width=0.4\textwidth]{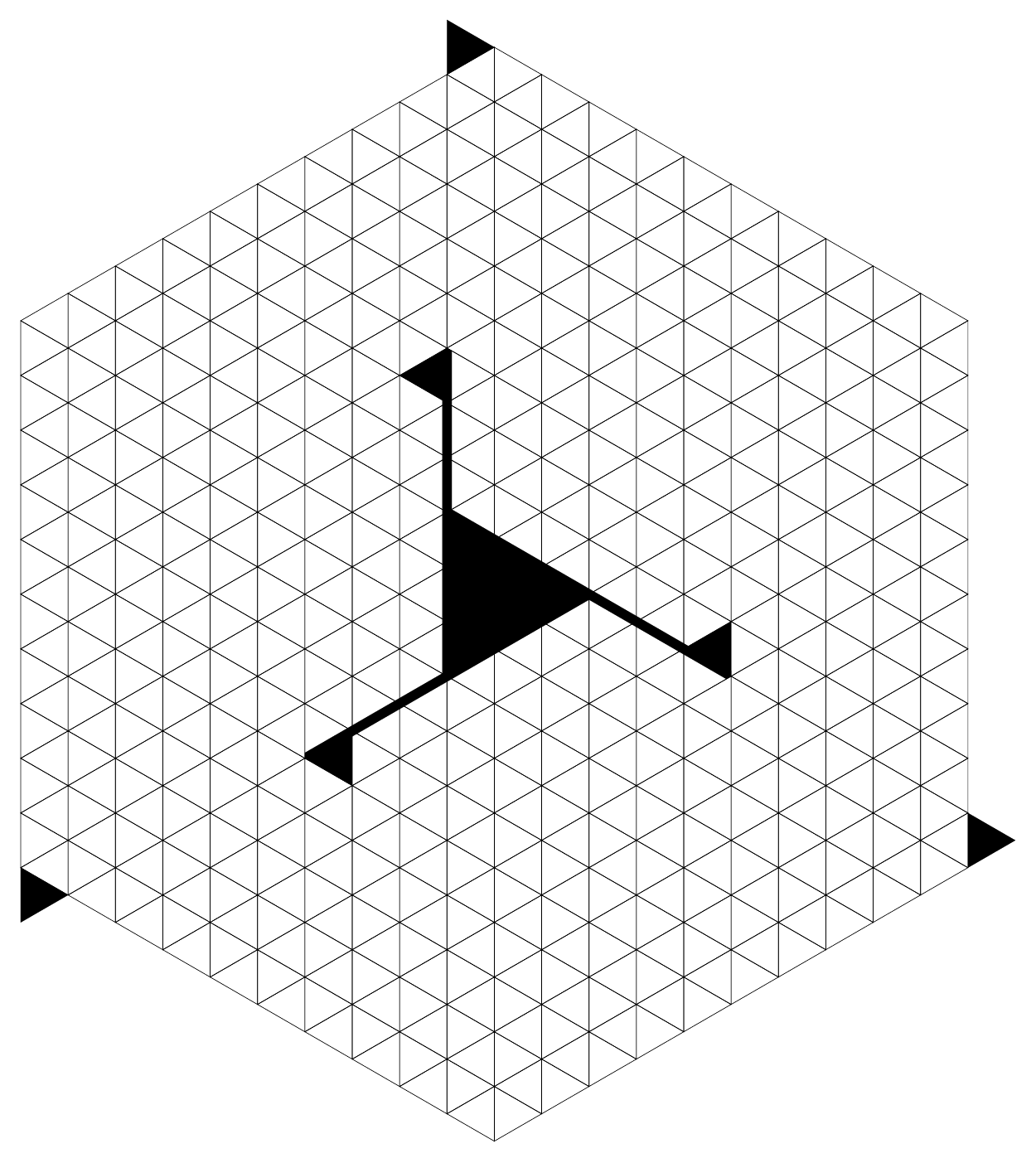}
  \end{center}
  \caption{Pinwheel-shaped regions with holes: the left figure corresponds to
    the parameters $s=5$, $t=1$, $n=5$, $\lambda=2$, the right figure corresponds
    to $s=2$, $t=3$, $n=6$, $\lambda=3$ (same as in Figure~\ref{fig.lozenges}).}
  \label{fig.pinwheel}
\end{figure}

The construction can be simplified by noting that a row of small triangular
holes induces a unique rhombus tiling when completing it to a big equilateral
triangle. Hence the pinwheel-shaped region can be replaced by a hexagon, by
cutting off three equilateral triangles of size $|s-t|$, without changing the
number of rhombus tilings. Similarly, the holes inside the region can be
re-interpreted as four triangular holes, of size $|\lambda|$ resp. $|s-t|$, that
are connected by boundary lines. We give an illustration of these regions in
Figure~\ref{fig.hexagon}.

\begin{figure}
  \begin{center}
    \includegraphics[width=0.4\textwidth]{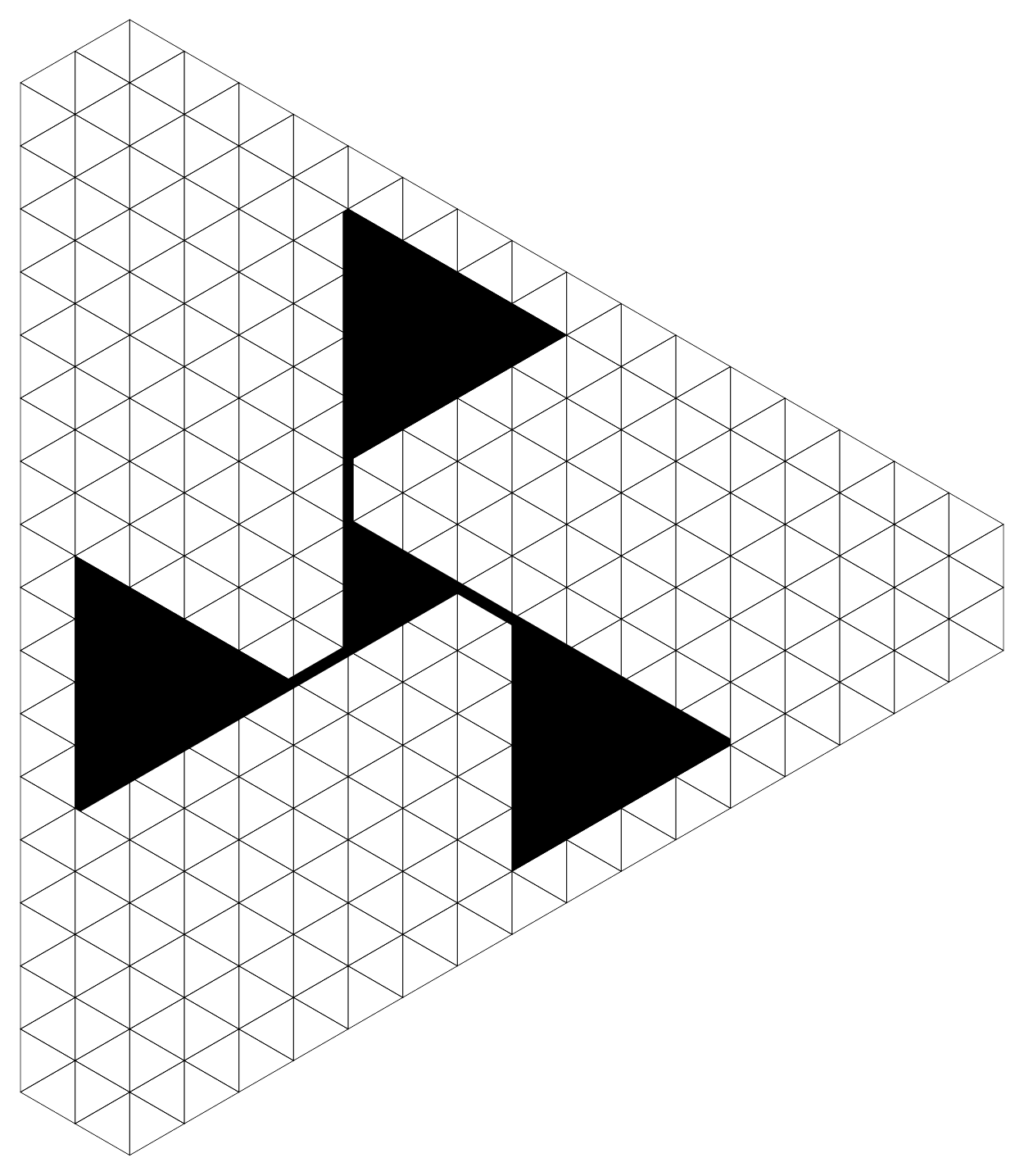}
    \qquad\qquad
    \includegraphics[width=0.4\textwidth]{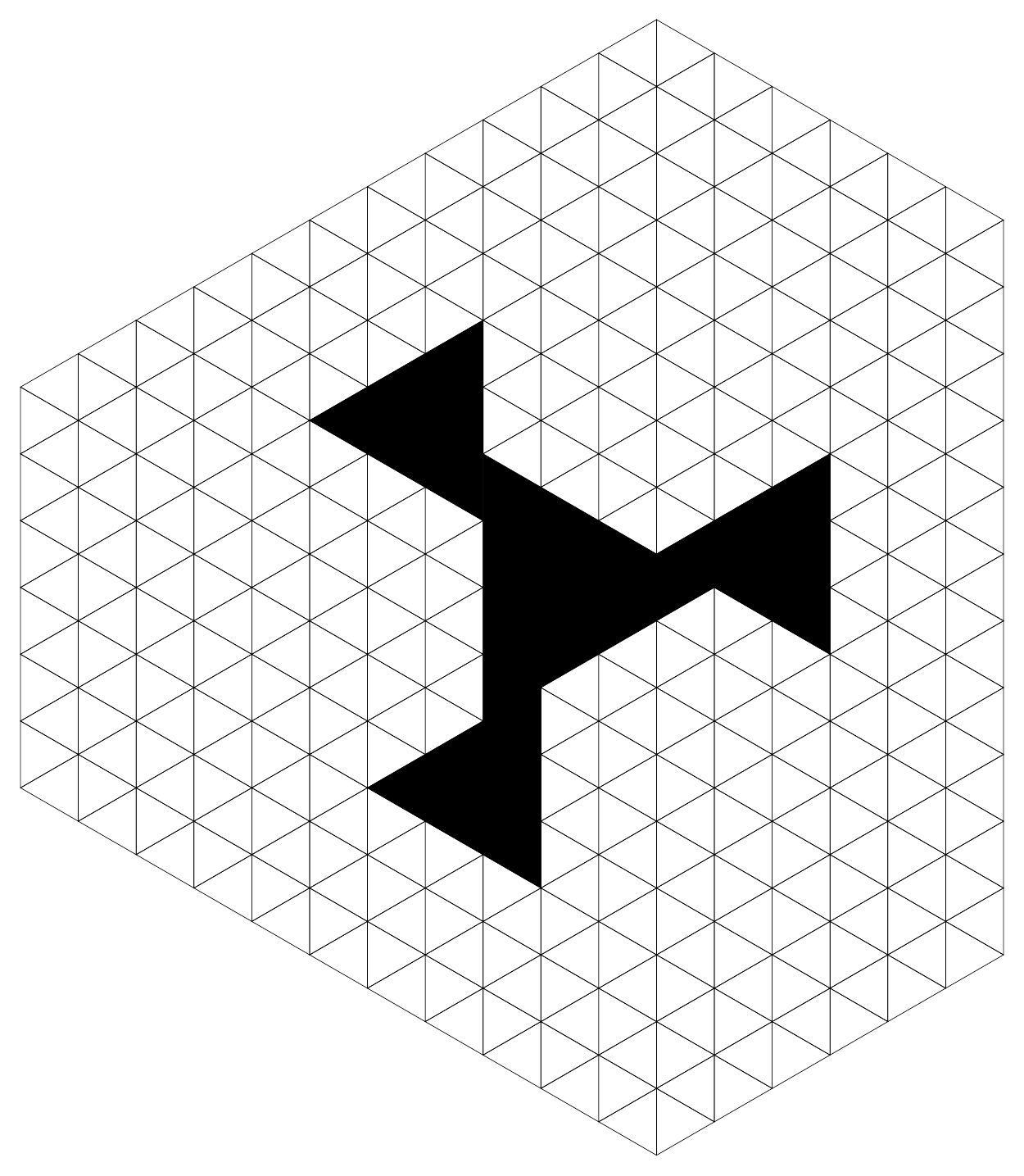}
  \end{center}
  \caption{Hexagonal regions with big triangular holes and border lines: the
    left figure corresponds to the same parameters as in
    Figure~\ref{fig.pinwheel} ($s=5$, $t=1$, $n=5$, $\lambda=2$), the right figure
    corresponds to $s=-1$, $t=2$, $n=6$, $\lambda=4$.}
  \label{fig.hexagon}
\end{figure}

As an example, we have worked out all cyclically symmetric rhombus tilings of
the hexagon that corresponds to $D_{1,1}(2)$ with $\lambda=1$; see
Figure~\ref{fig.tilings}. In this case, one can easily calculate
\[
  D_{1,1}(2)\big|_{\lambda\to1}
  = \begin{vmatrix} 4 & 6 \\ 4 & 11 \end{vmatrix}
  = 20.
\]

Another example that illustrates our combinatorial construction is the
identity
\[
  D_{s,t}(n) = D_{t+\lambda,s+\lambda}(n)\big|_{\lambda\to-\lambda}
\]
that follows directly by the mirror symmetry of the underlying tiling
regions. Assuming $\lambda\geq0$, the determinant $D_{s,t}(n)$ counts cyclically
symmetric rhombus tilings of a hexagon that has a triangular hole of size
$\lambda$ pointing to the right, with border lines of length $\min(s,t)$, to
each of which another triangular hole of size $|s-t|$ is attached, pointing
in clockwise direction if $s>t$. When we consider the transformed parameters
$s'=t+\lambda$, $t'=s+\lambda$, and $\lambda'=-\lambda$, we obtain a hexagonal region
with a hole of size $|\lambda'|=\lambda$ pointing to the left, with
border lines of length $\min(s',t')=\min(s,t)+\lambda$, each of which shares
a segment of length $\lambda$ with the hole (so only $\min(s,t)$ units are visible),
and with three other triangular holes of size $|s'-t'|=|s-t|$ each, pointing
in counterclockwise direction if $t'>s'$ (${\iff}$ $s>t$). Thus these two regions
are symmetric w.r.t.\ to a vertical axis and therefore possess the same
number of rhombus tilings.

\begin{figure}
  \begin{center}
    \includegraphics[width=0.8\textwidth]{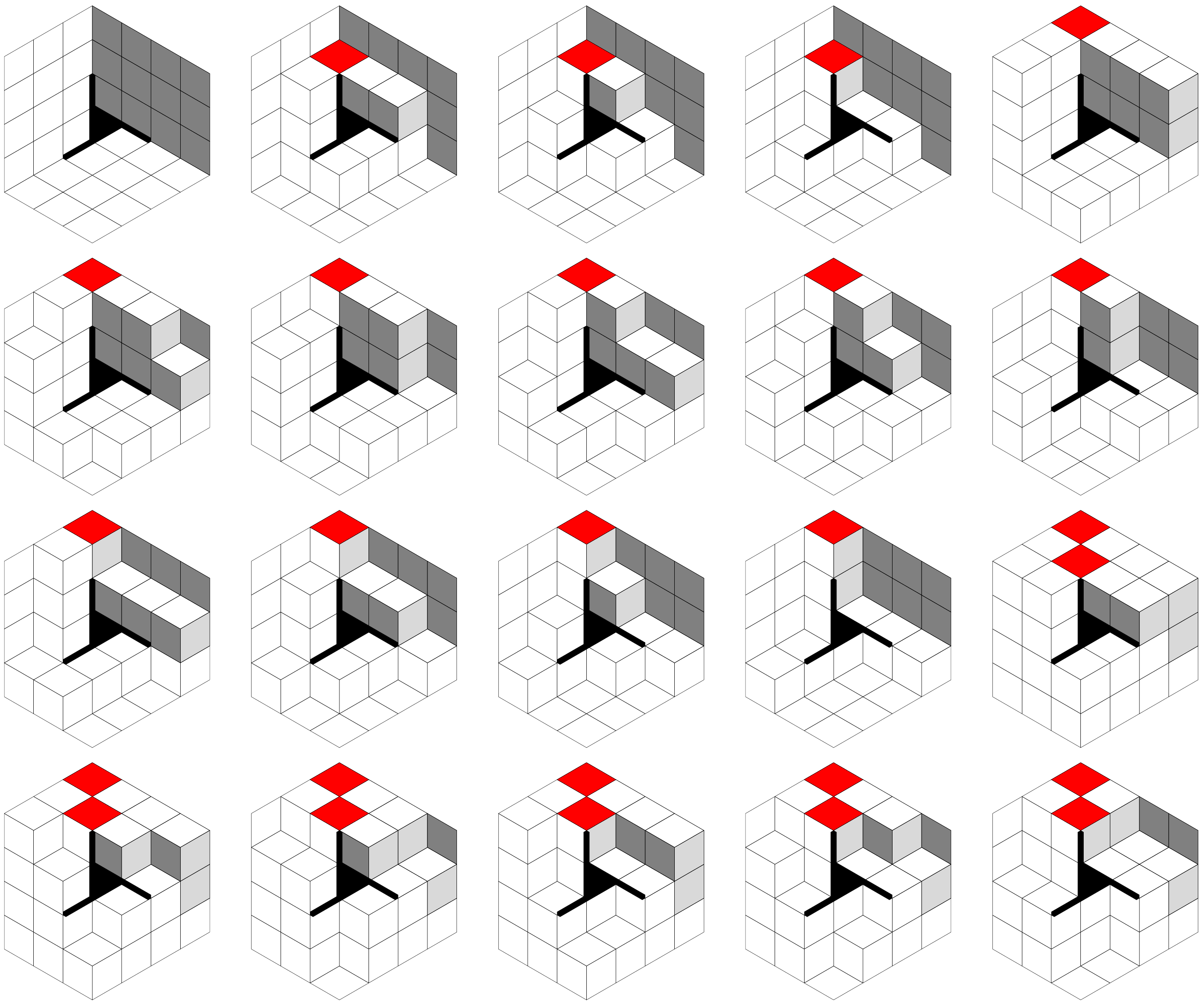}
  \end{center}
  \caption{All 20 cyclically symmetric rhombus tilings for the parameters
    $s=t=1$, $n=2$, and $\lambda=1$. The original lozenge is highlighted by
    shaded rhombi, the horizontal rhombi marking the end points of the
    lattice paths are colored red.}
  \label{fig.tilings}
\end{figure}

\section{Related Determinants}\label{sec.lemmas}

In this section, we prove a few easier results about particular instances of
the determinant~$D_{s,t}(n)$, with specific shifted corners, by using computer
proofs. Later, we put all these results together and obtain from it a
``closed-form'' formula for~$D(n)\quad (=D_{1,1}(n))$, via the celebrated
Desnanot-Jacobi-Dodgson identity: let $\bigl(m_{i,j}\bigr)_{i,j\in\Z}$ be a doubly
infinite sequence and denote by $M_{s,t}(n)$ the determinant of the $(n\times
n)$-matrix whose upper left entry is at $m_{s,t}$, more precisely the matrix
$\bigl(m_{i,j}\bigr)_{s\leq i<s+n,t\leq j<t+n}$. Then:
\begin{equation} \label{eq.DJD}
  M_{s,t}(n)M_{s+1,t+1}(n-2) = M_{s,t}(n-1)M_{s+1,t+1}(n-1)- M_{s+1,t}(n-1)M_{s,t+1}(n-1). \tag{DJD}
\end{equation}
For an excellent overview of this topic see~\cite{AmdeberhanZeilberger01}.

The following result was conjectured in~\cite{Andrews80a}, and in 2013 it was
proven by the authors of the present paper~\cite[Thm.~1]{KoutschanThanatipanonda13}:
\begin{thm} \label{thm.KT}
Let the determinant $D_{1,1}(n)$ be as in Definition~\ref{def.Dst}.
Then the following equation holds:
\begin{align*}
  \frac{D_{1,1}(2n)}{D_{1,1}(2n-1)} &=
   (-1)^{(n-1)(n-2)/2} \, 2^n \, \frac{
     \left(\frac{\mu}{2}+2n+\frac{1}{2}\right)_{\!n-1} \left(\frac{\mu}{2}+n\right)_{\!\lfloor (n+1)/2\rfloor}}{
     \bigl(n\bigr)_{\!n} \left(-\frac{\mu}{2}-2n+\frac{3}{2}\right)_{\!\lfloor (n-1)/2\rfloor}} \\
  &= 2^n \, \frac{
     \left(\frac{\mu}{2}+2n+\frac{1}{2}\right)_{\!n-1} \left(\frac{\mu}{2}+n\right)_{\!\lfloor (n+1)/2\rfloor}}{
     \bigl(n\bigr)_{\!n} \left(\frac{\mu}{2}+\bigl\lfloor\frac{3n}{2}\bigr\rfloor+\frac{1}{2}\right)_{\!\lfloor (n-1)/2\rfloor}} \\
  &= \frac{\bigl(\mu+2n\bigr)_{\!n} \left(\frac{\mu}{2}+2n+\frac{1}{2}\right)_{\!n-1}}{
           \bigl(n\bigr)_{\!n} \left(\frac{\mu}{2}+n+\frac{1}{2}\right)_{\!n-1}}.
\end{align*}
\end{thm}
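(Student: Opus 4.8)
The plan is to prove Theorem~\ref{thm.KT} via Zeilberger's \emph{holonomic ansatz} for determinant evaluation, treating the ratio $D_{1,1}(n)/D_{1,1}(n-1)$ as the primary object and only afterwards specializing $n\mapsto 2n$. Writing $m_{i,j}=\delta_{ij}+\binom{\mu+i+j-2}{j}$ for the matrix entries of $M_n=(m_{i,j})_{1\le i,j\le n}$, the starting point is the classical cofactor identity: if $c_{n,j}$ denotes the $(n,j)$-cofactor normalized by $c_{n,n}=1$, then $c_{n,\cdot}$ is characterized by the $n-1$ orthogonality relations $\sum_{j=1}^{n} m_{i,j}\,c_{n,j}=0$ for $1\le i\le n-1$ together with $c_{n,n}=1$, and one has $D_{1,1}(n)/D_{1,1}(n-1)=\sum_{j=1}^{n} m_{n,j}\,c_{n,j}$. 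Thus the whole evaluation reduces to (i) finding $c_{n,j}$ explicitly and (ii) summing the last relation.

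First I would compute $c_{n,j}$ for many pairs $(n,j)$ by solving the defining linear system (or by forming cofactors directly), and then guess a holonomic description for it, that is, a system of recurrences in $n$ and $j$ with polynomial coefficients in $n$, $j$, and $\mu$. Because $D_{1,1}(n)$ does \emph{not} split into linear factors, I expect $c_{n,j}$ to be not a single hypergeometric term but a genuinely holonomic sequence whose recurrences depend on the parity of $n$; consequently the guessing stage must be run separately for even and odd $n$, and the resulting operators will carry the same irreducible ``ugly'' content flagged in the introduction.

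Next I would certify the guess. By the uniqueness of the normalized cofactor row, it suffices to verify that the \emph{guessed} $c_{n,j}$ satisfies $c_{n,n}=1$ and the orthogonality relations $\sum_{j=1}^n m_{i,j}c_{n,j}=0$ for $1\le i\le n-1$. Each such sum is handled by creative telescoping: I would produce, via Zeilberger's algorithm applied to the holonomic system for the summand $m_{i,j}c_{n,j}$, a recurrence together with a certificate, and then check that the homogeneous sum vanishes for all admissible $i$ after accounting for the boundary contributions at $j=1$ and $j=n$. Running the same machinery on the diagonal sum $\sum_{j=1}^n m_{n,j}c_{n,j}$ yields a closed form for the ratio.

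Finally, setting $n\mapsto 2n$ selects the even-indexed ratio $D_{1,1}(2n)/D_{1,1}(2n-1)$, and the three displayed expressions follow from one another by routine Pochhammer and double-factorial manipulations (duplication of $(\mu+2n)_n$, a reflection to absorb the $(-1)^{(n-1)(n-2)/2}\,2^n$ prefactor, and rewriting the floor-index Pochhammers). I expect the genuine obstacle to lie in steps (i) and (ii): because the shared irreducible factor prevents $c_{n,j}$ from being hypergeometric, both the guessing of its parity-split holonomic recurrences and the creative-telescoping certification of the orthogonality relations are delicate and computationally heavy, whereas the concluding symbolic simplification is comparatively mechanical.
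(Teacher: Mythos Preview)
This theorem is not proved in the present paper; it is quoted from the authors' earlier work~\cite{KoutschanThanatipanonda13}, where it was established by exactly the holonomic ansatz you describe (the same method is spelled out in detail here in the proof of Lemma~\ref{lem.R20}). Your proposal is therefore essentially the original proof.

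One minor framing issue: your opening sentence suggests setting up the ansatz for the general ratio $D_{1,1}(n)/D_{1,1}(n-1)$ and specializing $n\mapsto 2n$ at the end, but the odd-index quotient $D_{1,1}(2n+1)/D_{1,1}(2n)$ carries the ``ugly'' irreducible factor and has no closed form, so there is no uniform formula to specialize. In practice the ansatz is applied directly to the $2n\times 2n$ matrix, with $c_{n,j}$ indexed by $n\ge 1$ and $1\le j\le 2n$ (as in Lemma~\ref{lem.R20}); you do acknowledge the parity split later, so this is a matter of presentation rather than a genuine gap.
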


In the following we state five lemmas with computer proofs, concerning
special cases of the general determinant~$D_{s,t}(n)$. They are employed
afterwards to obtain closed-form formulas for the determinants $D_{0,0}(n)$,
$D_{1,0}(n)$ and $D_{0,1}(n)$; see Propositions \ref{prop.D00},
\ref{prop.D10}, and \ref{prop.D01}, respectively. These in turn will be used
in the main formula for~$D_{1,1}(n)$ in Section~\ref{sec.nice}.

\begin{lem} \label{lem.E10}
$D_{1,0}(2n)=0$ for all integers $n\geq1$.
\end{lem}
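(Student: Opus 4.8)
The plan is to show that the $2n$ rows of the underlying matrix are linearly dependent, so that the determinant vanishes identically as a polynomial in~$\mu$. Writing $m_{i,j}=\delta_{i,j}+\binom{\mu+i+j-2}{j}$ for $1\le i\le 2n$ and $0\le j\le 2n-1$, I would look for a nonzero left null vector $(r_1,\dots,r_{2n})$, i.e.\ coefficients~$r_i$ that are rational in~$\mu$, with $\sum_{i=1}^{2n} r_i\,m_{i,j}=0$ for every column~$j$. After separating the Kronecker deltas this amounts to verifying the family of binomial-sum identities $\sum_{i=1}^{2n} r_i\binom{\mu+i+j-2}{j}=-r_j$ for $1\le j\le 2n-1$, together with $\sum_{i=1}^{2n} r_i\binom{\mu+i+j-2}{j}=0$ for $j=0$. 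The coefficients $r_i$ would be guessed from small cases (for $n=1$ the two rows are already equal) and then proved in closed form; note that a $\mu$-independent null vector does not exist already for $n=2$, so the~$r_i$ genuinely depend on~$\mu$.

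The engine driving the argument is the Pascal contiguity relation $\binom{\mu+i+j-2}{j}-\binom{\mu+i+j-3}{j}=\binom{\mu+i+j-3}{j-1}$, which in matrix terms says that subtracting consecutive rows of the pure binomial matrix shifts each row one column to the right. I would therefore first perform the determinant-preserving row operations $R_i\mapsto R_i-R_{i-1}$ for $i=2n,\dots,2$. Since the first column is constantly~$1$, this empties it except for its top entry, and a Laplace expansion along it reduces the problem to a $(2n-1)\times(2n-1)$ determinant whose binomial part is again of the same type (with $\mu$ replaced by $\mu+1$), but now carrying a modified pattern of $\pm1$ corrections coming from the deltas, namely $\delta_{i,j}-\delta_{i-1,j}$. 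Iterating, the size drops while the correction terms accumulate; the whole effect is expected to collapse because of Pochhammer cancellations of the shape $2\binom{\mu+2}{2}=(\mu+2)(\mu+1)$ that one already meets when finishing the $n=2$ case by hand.

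Because the deltas break pure hypergeometricity, I expect the clean, fully general proof to be the automated one: feed the guessed closed form of $(r_i)$ and the resulting parametrized sum into creative telescoping (Zeilberger's algorithm) to certify the column identities, treating the finitely many columns where a delta fires—together with the boundary column $j=0$ and the missing index $j=2n$—as explicit corrections to the telescoped relation. The main obstacle is precisely this bookkeeping of the Kronecker deltas: one must show that a single certificate validates all $2n$ columns simultaneously and that the boundary terms close up. It is here that the evenness of~$2n$ becomes essential, in that the correction terms cancel only for even size, consistent with the odd-sized determinant $D_{1,0}(2n-1)$ being nonzero (Proposition~\ref{prop.D10}). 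As a conceptual cross-check I would also observe that, since $s-t=1$ is odd, \eqref{eq.SumOfMinors} expresses $D_{1,0}(2n)$ as a \emph{signed} sum of minors; hence a sign-reversing involution on the associated tuples of non-intersecting lattice paths would yield the same vanishing, although making such an involution rigorous for indeterminate~$\mu$ is less direct than the algebraic route.
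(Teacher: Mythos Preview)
Your plan is essentially the paper's own proof: exhibit a nontrivial null vector of the matrix (the paper works with columns rather than rows, but that is immaterial), guess a description of its entries from data for small~$n$, and then certify the resulting binomial-sum identity automatically. The one practical caveat is that the paper does \emph{not} obtain a closed form for the null-vector entries; it guesses bivariate recurrences (a holonomic description) for the normalized coefficients $c_{n,j}$ and then uses the \texttt{HolonomicFunctions} package to prove the identity $\sum_{j=1}^{2n}\binom{\mu+i+j-3}{j-1}c_{n,j}=-c_{n,i+1}$, so your expectation of an explicit closed form for the~$r_i$ may have to be relaxed to exactly this kind of recurrence-based certificate.
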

\begin{proof}
In order to prove that the determinant vanishes, we exhibit a concrete nontrivial
linear combination of the columns of the matrix:
\newcommand{\vph}{\vphantom{\binom{\mu+2n-2}{2n-1}}}
\[
  c_{n,1} \cdot \begin{pmatrix} \binom{\mu-1}{0}\vph \\[1ex] \binom{\mu}{0}\vph \\
    \vdots \\[1ex] \binom{\mu+2n-3}{0}\vph \\[1ex] \binom{\mu+2n-2}{0}\vph \end{pmatrix} +
  c_{n,2} \cdot \begin{pmatrix} \binom{\mu}{1}+1\vph \\[1ex] \binom{\mu+1}{1}\vph \\
    \vdots \\[1ex] \binom{\mu+2n-2}{1}\vph \\[1ex] \binom{\mu+2n-1}{1}\vph \end{pmatrix} +
  \dots +
  c_{n,2n} \cdot \begin{pmatrix} \binom{\mu+2n-2}{2n-1} \\[1ex] \binom{\mu+2n-1}{2n-1} \\
    \vdots \\[1ex] \binom{\mu+4n-4}{2n-1}+1 \\[1ex] \binom{\mu+4n-3}{2n-1} \end{pmatrix}
  = \begin{pmatrix} 0\vph \\[1ex] 0\vph \\
    \vdots \\[1ex] 0\vph \\[1ex] 0\vph \end{pmatrix},
\]
where the coefficients $c_{n,j}$ are rational functions in $\Q(\mu)$.  For all
$n\leq30$ the nullspace of $D_{1,0}(2n)$ has dimension~$1$, and it seems
likely that this is the case for all~$n$. However, we need not care whether
this is true or not, the important fact is that the coefficients $c_{n,j}$ for
$n\leq30$ and $1\leq j\leq 2n$ are determined uniquely if we impose
$c_{n,2n}=1$. Hence they are easily computed by linear algebra, and we can use
these explicitly computed values to construct recurrence equations satisfied
by them (colloquially called ``guessing''). Now we consider the infinite
sequence $\bigl(c_{n,j}\bigr){}_{n,j\in\N}$ that is defined by these
recurrence equations, subject to initial conditions that agree with the
explicitly computed $c_{n,j}$. We want to show that for all~$n$ the vector
$\bigl(c_{n,j}\bigr){}_{1\leq j\leq 2n}$ lies in the kernel of~$D_{1,0}(2n)$
(so far we only know this for $n$ up to~$30$). This reduces to proving the
holonomic function identity
\[
  \sum_{j=1}^{2n} \binom{\mu+i+j-3}{j-1} c_{n,j} = -c_{n,i+1} \qquad (1\leq i\leq 2n).
\]
Using the computer algebra package HolonomicFunctions~\cite{Koutschan10b},
developed by the first-named author, it can be proven without much effort.
The details of the computer calculations can be found in~\cite{EM}.
\end{proof}

\begin{lem} \label{lem.E01}
$D_{0,1}(2n)=0$ for all integers $n\geq1$.
\end{lem}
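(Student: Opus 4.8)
The plan is to mimic the proof of Lemma~\ref{lem.E10} exactly, since $D_{0,1}(2n)$ and $D_{1,0}(2n)$ are near-mirror images of one another. First I would write out the matrix underlying $D_{0,1}(2n)$ explicitly: by Definition~\ref{def.Dst} its $(i,j)$-entry (for $0\leq i<2n$, $1\leq j<2n+1$) is $\delta_{ij}+\binom{\mu+i+j-2}{j}$, so after shifting indices the Kronecker delta now sits on a shifted diagonal (the entry $\delta_{i,j}$ with $s=0$, $t=1$ contributes $1$ when row index $i$ equals column index $j$, which for the shifted ranges means the $1$'s appear on the sub/super-diagonal opposite to the $D_{1,0}$ case). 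The goal is again to exhibit an explicit nonzero vector in the kernel, this time a row-combination or column-combination annihilating the matrix.

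The key steps, in order, are as follows. First I would compute the determinant and its nullspace numerically for $n$ up to some bound (say $30$), confirming that the kernel is one-dimensional and normalizing by setting the last coefficient to~$1$, so that the coefficients $c'_{n,j}\in\Q(\mu)$ are uniquely determined. Second, I would guess bivariate recurrence equations in $n$ and $j$ satisfied by the computed data, using the guessing functionality of HolonomicFunctions~\cite{Koutschan10b}. Third, I would define the infinite sequence $\bigl(c'_{n,j}\bigr)$ by these recurrences together with matching initial conditions, and reduce the claim ``this vector lies in the kernel of $D_{0,1}(2n)$ for all $n$'' to a single holonomic summation identity of the shape $\sum_{j} b_{i,j}\, c'_{n,j} = -c'_{n,i}$ (with the appropriate binomial $b_{i,j}=\binom{\mu+i+j-2}{j}$ and the shifted diagonal term on the right), valid for all admissible~$i$. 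Fourth, I would verify this identity automatically with the package, which reduces to a routine creative-telescoping / closure-properties computation once the recurrences are known.

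The main obstacle, as in the previous lemma, is not the verification but the \emph{discovery} of the correct kernel vector and its defining recurrences: the guessing step requires enough data and a good choice of normalization so that the $c'_{n,j}$ are genuinely rational in $\mu$ and satisfy recurrences of manageable order. Once the recurrences are guessed, the remaining work is a mechanical holonomic certification that the telescoped sum collapses to the required right-hand side. In fact, because $D_{0,1}(n)$ is the transpose-type companion of $D_{1,0}(n)$ (indeed, the combinatorial mirror symmetry $D_{s,t}(n)=D_{t+\lambda,s+\lambda}(n)\big|_{\lambda\to-\lambda}$ noted in Section~\ref{sec.comb} relates the two), I expect one can shortcut the whole computation by deducing the vanishing of $D_{0,1}(2n)$ directly from Lemma~\ref{lem.E10} via that symmetry, rather than re-running the guessing machinery; I would attempt that reduction first and fall back on the independent computer proof only if the symmetry does not transport the zero cleanly.
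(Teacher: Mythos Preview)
Your proposal is correct and follows essentially the same route as the paper: the paper's proof of this lemma is literally ``analogous to the one of Lemma~\ref{lem.E10}'', i.e., guess a kernel vector from explicit data, pin it down by bivariate recurrences, and certify the resulting holonomic summation identity with the \texttt{HolonomicFunctions} package. Your side remark about deducing the vanishing from Lemma~\ref{lem.E10} via the mirror symmetry $D_{s,t}(n)=D_{t+\lambda,s+\lambda}(n)\big|_{\lambda\to-\lambda}$ is a nice idea, but note that this identity as stated in Section~\ref{sec.comb} is established combinatorially for integer~$\lambda$ (so the shifted indices $t+\lambda,s+\lambda$ are integers), and turning it into a polynomial-in-$\mu$ statement that directly transports the zero would itself require justification; the paper does not take this shortcut.
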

\begin{proof}
The proof is analogous to the one of Lemma~\ref{lem.E10}. The detailed
computations can be found in the electronic material~\cite{EM}.
\end{proof}

\begin{lem} \label{lem.R00}
\[
  \frac{D_{0,0}(2n)}{D_{0,0}(2n-1)} =
  \frac{
    \bigl(\mu+2n-2\bigr)_{\!n-1}
    \left(\frac{\mu}{2}+2n-\frac{1}{2}\right)_{\!n}
  }{
    \bigl(n\bigr)_{\!n}
    \left(\frac{\mu}{2}+n-\frac{1}{2}\right)_{\!n-1}
  }.
\]
\end{lem}
\begin{proof}
Note that $D_{0,0}(n)$ is basically the same determinant as~\eqref{eq.AndDet}
(upon replacing $\mu$ by $\mu+2$). Its evaluation was first achieved by George
Andrews~\cite{Andrews79}. The above statement is a corollary of his result, so
there is nothing to prove. Just for completeness, and to show that all
statements presented here can be treated with the same uniform approach, we
give also a computer algebra proof in~\cite{EM}.
\end{proof}

\begin{lem} \label{lem.R20}
\[
  \frac{D_{2,0}(2n)}{D_{2,0}(2n-1)} =
  \frac{
    \bigl(\mu+2n+1\bigr)_{\!n-1}
    \left(\frac{\mu}{2}+2n+\frac{1}{2}\right)_{\!n-1}
  }{
    \bigl(n\bigr)_{\!n-1}
    \left(\frac{\mu}{2}+n+\frac{1}{2}\right)_{\!n-1}
  }.
\]
\end{lem}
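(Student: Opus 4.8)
The plan is to follow the same \emph{holonomic ansatz} (due to Zeilberger) that underlies the other computer proofs in this section, but in its ratio form, since here the determinant is nonzero. After the index shift used in Section~\ref{sec.comb}, write $D_{2,0}(m)=\det(b_{i,j})_{1\le i,j\le m}$ with $b_{i,j}=\delta_{i+2,j}+\binom{\mu+i+j-2}{j-1}$. In contrast to Lemmas~\ref{lem.E10} and~\ref{lem.E01}, where one exhibits a vector in the kernel of the entire (singular) matrix, I would introduce normalized cofactors $c_{m,1},\dots,c_{m,m}$ lying in the kernel of the top $m-1$ rows only, characterized by $c_{m,m}=1$ together with
\[
  \sum_{j=1}^{m} \binom{\mu+i+j-2}{j-1}\,c_{m,j} = -c_{m,i+2} \qquad (1\le i\le m-1),
\]
with the convention $c_{m,k}=0$ for $k>m$. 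Once these relations hold and $D_{2,0}(m-1)\neq 0$, the standard cofactor argument yields $D_{2,0}(m)/D_{2,0}(m-1)=\sum_{j=1}^{m}\binom{\mu+m+j-2}{j-1}\,c_{m,j}$, and it then remains to evaluate this top-row sum and specialize $m=2n$.

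Concretely, I would first compute the cofactors $c_{m,j}$ for many small $m$ by linear algebra over $\Q(\mu)$ (exactly as in the proof of Lemma~\ref{lem.E10}) and \emph{guess} a system of recurrences in $m$ and $j$ satisfied by them, using the HolonomicFunctions package~\cite{Koutschan10b}. With a holonomic description of $c_{m,j}$ at hand, each kernel relation becomes a parametrized (holonomic) summation identity in the free parameter $i$, which I would discharge by creative telescoping: construct a recurrence for the summand $\binom{\mu+i+j-2}{j-1}c_{m,j}$ together with a telescoping certificate, and check that the resulting boundary terms match the right-hand side $-c_{m,i+2}$. The normalization $c_{m,m}=1$ and the non-vanishing of $D_{2,0}(m-1)$ (which follows a posteriori from the nonzero closed form) are verified separately. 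Finally I would evaluate the top-row sum by the same machinery and, putting $m=2n$, simplify to the claimed quotient; as with Lemmas~\ref{lem.E10} and~\ref{lem.R00}, I expect the even case to collapse to a clean product of Pochhammer symbols while the odd case carries an extra irreducible factor.

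The hard part will be the guessing-and-proving of the cofactors. A priori the orders of the recurrences governing $c_{m,j}$ are unknown, so enough data must be generated to pin down a correct holonomic ansatz, and the ensuing creative-telescoping certificate for the kernel identity can be sizeable; handling the parity split cleanly, so that the $m=2n$ specialization produces exactly the stated Pochhammer quotient, is the other delicate point. As an independent check, and as a possible alternative route, I would expand $D_{2,0}(m)$ by the sum-of-minors formula~\eqref{eq.SumOfMinors}, which for $s-t=2$ reads $D_{2,0}(m)=\sum_{I\subseteq\{1,\dots,m-2\}}\det(B^I_{I+2})$; each minor is a pure binomial determinant that factors by the Lindstr\"om--Gessel--Viennot lemma, and evaluating the resulting subset sum in closed form would confirm the formula and its parity dependence from a more combinatorial angle.
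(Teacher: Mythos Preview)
Your proposal is correct and follows essentially the same route as the paper: Zeilberger's holonomic ansatz with guessed normalized cofactors, verification of the normalization $c_{\cdot,\text{last}}=1$ and of the kernel identity via HolonomicFunctions, and then evaluation of the remaining-row sum in closed form. The only practical difference is that the paper parametrizes the cofactors directly by $n$ with matrix size $2n$ (rather than by a general size~$m$), so the parity split you anticipate never enters the computation; note also that what you call the ``top-row sum'' is actually the last-row ($i=m$) sum, as your own formula $\sum_j\binom{\mu+m+j-2}{j-1}c_{m,j}$ already shows.
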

\begin{proof}
We employ computer algebra methods to prove the statement, following
Zeilberger's holonomic ansatz~\cite{Zeilberger07}. The overall proof strategy
is similar to the one in Lemma~\ref{lem.E10}: using an ansatz with
undetermined coefficients (``guessing'') we find the holonomic description of
an auxiliary bivariate sequence~$\bigl(c_{n,j}\bigr){}_{n,j\in\N}$ that
certifies the correctness of the statement. In contrast to
Lemma~\ref{lem.E10}, the statement we want to prove implies that the
determinant $D_{2,0}(2n)$ is nonzero, and hence we shall not succeed in
finding a nonzero vector in the nullspace of the corresponding
matrix. Instead, we delete its last row and consider the nullspace of the
obtained $(2n-1)\times(2n)$-matrix, and proceed as in the proof of
Lemma~\ref{lem.E10}: for concrete small~$n$ compute a vector of length~$2n$
that spans this (one-dimensional) nullspace, normalize it such that its last
component equals~$1$, and construct bivariate recurrence equations satisfied
by this data. This holonomic description (recurrences plus finitely many
initial values) uniquely defines an infinite
sequence~$\bigl(c_{n,j}\bigr){}_{n,j\in\N}$.  We use the HolonomicFunctions
package~\cite{Koutschan10b} to prove some general properties and identities of
this sequence.

First, we show that $c_{n,2n}=1$ holds for all $n\in\N$, by constructing a
linear combination of our recurrences (and possibly their shifted versions) in
which only terms of the form $c_{n,j},c_{n+1,j+2},c_{n+2,j+4},\dots$
occur. Substituting $j=2n$ yields a recurrence for the univariate sequence
$g_n:=c_{n,2n}$ and we can verify that the constant $1$ sequence is among its
solutions.

Second, we prove the following summation identity, where we denote by
$a_{i,j}$ the $(i,j)$-entry of $D_{2,0}(2n)$:
\[
  \sum_{j=1}^{2n} a_{i,j} \, c_{n,j} = 0, \qquad \text{for all } n\in\N \text{ and } 1\leq i\leq 2n-1.
\]
It follows by linear algebra that $c_{n,j}$ is closely related to the
$(2n,j)$-minor $M_{2n,j}$ of the matrix of $D_{2,0}(2n)$:
\[
  c_{n,j} = (-1)^{2n+j} \frac{M_{2n,j}}{M_{2n,2n}} = (-1)^j \frac{M_{2n,j}}{D_{2,0}(2n-1)}.
\]

Third, one observes that the $c_{n,j}$ with $1\leq j\leq 2n$ are the cofactors
of the Laplace expansion of $D_{2,0}(2n)$ with respect to the last row,
divided by $D_{2,0}(2n-1)$, which implies that
\[
  \sum_{j=1}^{2n} a_{2n,j} c_{n,j} = \frac{D_{2,0}(2n)}{D_{2,0}(2n-1)}.
\]
Hence, the proof is concluded by proving that this sum equals the asserted
quotient of Pochhammer symbols. The proofs of the summation identities are
carried out with HolonomicFunctions, and the details of these computations are
contained in the electronic material~\cite{EM}.
\end{proof}

\begin{lem} \label{lem.R02}
\[
  \frac{D_{0,2}(2n)}{D_{0,2}(2n-1)} =
  \frac{
    (2n-1) \, \bigl(\mu+2n-2\bigr)_{\!n+2}
    \left(\frac{\mu}{2}+2n+\frac{1}{2}\right)_{\!n-1}
  }{
    (\mu+2n) \, \bigl(n\bigr)_{\!n+2}
    \left(\frac{\mu}{2}+n+\frac{1}{2}\right)_{\!n-1}
  }.
\]
\end{lem}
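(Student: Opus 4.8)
The plan is to follow exactly the same computational strategy as in the proof of Lemma~\ref{lem.R20}, namely Zeilberger's holonomic ansatz~\cite{Zeilberger07}, since the statement concerns a quotient of two consecutive nonzero determinants $D_{0,2}(2n)/D_{0,2}(2n-1)$ and has the identical structural form. First I would set up the $(2n\times 2n)$-matrix whose $(i,j)$-entry is $a_{i,j}=\delta_{i-2,j}+\binom{\mu+i+j-2}{j+1}$ coming from Definition~\ref{def.Dst} with $s=0$, $t=2$ (after the index shift that places the upper-left corner at $(0,2)$). Deleting its last row yields a $(2n-1)\times 2n$-matrix, and I would compute, for small $n$, a vector $\bigl(c_{n,j}\bigr)_{1\leq j\leq 2n}$ spanning the (generically one-dimensional) nullspace, normalized so that $c_{n,2n}=1$. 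By the same cofactor argument as in Lemma~\ref{lem.R20}, these $c_{n,j}$ are the Laplace cofactors of the last-row expansion of $D_{0,2}(2n)$ divided by $D_{0,2}(2n-1)$.

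Next I would use the HolonomicFunctions package~\cite{Koutschan10b} to guess a holonomic (bivariate recurrence) description of the sequence $\bigl(c_{n,j}\bigr){}_{n,j\in\N}$ from the explicitly computed data, and then certify it by proving three identities, in direct parallel to Lemma~\ref{lem.R20}. The three verifications are: (i) that $g_n:=c_{n,2n}=1$ for all $n$, obtained by combining the recurrences so that only the diagonal terms $c_{n,j},c_{n+1,j+2},\dots$ survive and then substituting $j=2n$ to get a univariate recurrence for $g_n$ of which the constant sequence is a solution; (ii) the summation identity
\[
  \sum_{j=1}^{2n} a_{i,j}\,c_{n,j}=0 \qquad (1\leq i\leq 2n-1),
\]
which expresses that $\bigl(c_{n,j}\bigr)_j$ lies in the nullspace of the truncated matrix; and (iii) the closing identity
\[
  \sum_{j=1}^{2n} a_{2n,j}\,c_{n,j}=\frac{D_{0,2}(2n)}{D_{0,2}(2n-1)},
\]
where the right-hand side must be shown to equal the asserted quotient of Pochhammer symbols. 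Each of these is a holonomic-function identity that can be discharged mechanically once the correct recurrences for $c_{n,j}$ are in hand.

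The main obstacle, as in Lemma~\ref{lem.R20}, will be the guessing step: finding recurrences of sufficiently low order and degree to describe the $c_{n,j}$ reliably, which in turn requires computing enough nullspace data for moderately large~$n$. I anticipate that $D_{0,2}(n)$ is slightly more delicate than $D_{2,0}(n)$ because of the extra factor $(2n-1)/(\mu+2n)$ appearing in the closed form, which signals that the quotient does not factor purely into Pochhammer symbols; this rational prefactor must emerge correctly from the last-row evaluation in step~(iii). Once the holonomic certificate is verified, telescoping the recurrences through HolonomicFunctions completes the proof, and I would record the explicit recurrences and certificates in the electronic material~\cite{EM}.
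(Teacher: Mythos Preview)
Your proposal is correct and follows essentially the same approach as the paper, which explicitly states that the proof of Lemma~\ref{lem.R02} is analogous to that of Lemma~\ref{lem.R20} via the holonomic ansatz. The one practical subtlety the paper flags---and which you should anticipate---is that the guessed recurrences for $c_{n,j}$ fail at $n=1$ because for $D_{0,2}(2)$ the Kronecker delta does not appear in the matrix at all, so you must omit the $n=1$ data when guessing (otherwise no consistent recurrences will be found).
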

\begin{proof}
The proof is analogous to that of Lemma~\ref{lem.R20}; details can be found
in~\cite{EM}.  However, we want to point out one issue that we encountered in
the computations: In the guessing step we had to omit some of the data, as it
was inconsistent with the rest of the data. More concretely, the recurrences
we found were not valid for $c_{n,j}$ at $n=1$.  For the rest of the proof,
this is irrelevant, but being unaware of this issue, one could get the
impression that no recurrences exist at all. This phenomenon is explained by
the fact that for $n=1$ the Kronecker delta does not appear in the matrix, and
hence this case is somehow special. (For the same reason, we have the
condition $n\geq r$ in Corollaries~\ref{cor.famE} and~\ref{cor.famF}, for
example.)
\end{proof}

\begin{prop} \label{prop.D00}
We have $D_{0,0}(n) = 2\displaystyle\prod_{i=1}^{n-1} R_{0,0}(i)$, in other
words $R_{0,0}(n) = D_{0,0}(n+1) / D_{0,0}(n)$, where
\begin{align*}
  R_{0,0}(2n) &=
  \frac{
    \bigl(\mu+2n\bigr)_{\!n}
    \left(\frac{\mu}{2}+2n+\frac{1}{2}\right)_{\!n-1}
  }{
    \bigl(n\bigr)_{\!n}
    \left(\frac{\mu}{2}+n+\frac{1}{2}\right)_{\!n-1}
  }, \\[1ex]
  R_{0,0}(2n-1) &=
  \frac{
    \bigl(\mu+2n-2\bigr)_{\!n-1}
    \left(\frac{\mu}{2}+2n-\frac{1}{2}\right)_{\!n}
  }{
    \bigl(n\bigr)_{\!n}
    \left(\frac{\mu}{2}+n-\frac{1}{2}\right)_{\!n-1}
  }.
\end{align*}
\end{prop}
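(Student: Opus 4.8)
The plan is to recognize that the product formula $D_{0,0}(n) = 2\prod_{i=1}^{n-1} R_{0,0}(i)$ is nothing but the telescoping identity $D_{0,0}(n) = D_{0,0}(1)\prod_{i=1}^{n-1}\bigl(D_{0,0}(i+1)/D_{0,0}(i)\bigr)$ combined with the base case $D_{0,0}(1)=\delta_{0,0}+\binom{\mu-2}{0}=2$, which I would check by evaluating the $1\times1$ determinant of Definition~\ref{def.Dst} directly. Working over the field $\Q(\mu)$, so that every determinant is a nonzero rational function and all ratios are well defined, it therefore suffices to establish the two asserted closed forms for $R_{0,0}(m)=D_{0,0}(m+1)/D_{0,0}(m)$, split according to the parity of~$m$. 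The odd case $m=2n-1$ is immediate: the stated formula for $R_{0,0}(2n-1)$ is literally the quotient $D_{0,0}(2n)/D_{0,0}(2n-1)$ that was evaluated in Lemma~\ref{lem.R00}, so nothing remains to be done there.

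For the even case $m=2n$ I would invoke the Desnanot--Jacobi--Dodgson identity~\eqref{eq.DJD} for the doubly infinite array $m_{i,j}=\delta_{ij}+\binom{\mu+i+j-2}{j}$, for which $M_{s,t}(n)=D_{s,t}(n)$ by Definition~\ref{def.Dst}. Taking $s=t=0$ and size $2n+1$ yields
\[
  D_{0,0}(2n+1)\,D_{1,1}(2n-1) = D_{0,0}(2n)\,D_{1,1}(2n) - D_{1,0}(2n)\,D_{0,1}(2n).
\]
The crucial point is that both factors of the subtracted term carry \emph{even} arguments, so Lemmas~\ref{lem.E10} and~\ref{lem.E01} force $D_{1,0}(2n)=D_{0,1}(2n)=0$ and that term drops out. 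Dividing by $D_{0,0}(2n)\,D_{1,1}(2n-1)$ then gives
\[
  R_{0,0}(2n) = \frac{D_{0,0}(2n+1)}{D_{0,0}(2n)} = \frac{D_{1,1}(2n)}{D_{1,1}(2n-1)},
\]
and the right-hand side is exactly the quantity computed in Theorem~\ref{thm.KT}. Its third displayed form coincides termwise with the asserted expression for $R_{0,0}(2n)$, so the even case follows with no further computation.

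The real content here is not a calculation but the parity bookkeeping: the vanishing lemmas apply only at even arguments, and it is precisely the even-indexed ratio $R_{0,0}(2n)$ whose Dodgson reduction of the odd-sized determinant $D_{0,0}(2n+1)$ produces the off-diagonal factors $D_{1,0}(2n)$ and $D_{0,1}(2n)$ at those even arguments. The hard part will therefore be to make sure the parities line up as claimed; in particular I would verify the smallest instance $n=1$ by hand, since there the minor $D_{1,1}(2n-1)=D_{1,1}(1)$ degenerates to a $1\times1$ determinant. The only other point needing attention is to justify the divisions: because $\mu$ is an indeterminate, $D_{0,0}(2n)$ and $D_{1,1}(2n-1)$ are nonzero elements of $\Q(\mu)$ (the latter being implicit already in the statement of Theorem~\ref{thm.KT}), so the cancellations are legitimate. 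I do not expect a genuine obstacle, as all the analytic difficulty has already been absorbed into the five lemmas and Theorem~\ref{thm.KT}.
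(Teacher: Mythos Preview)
Your proposal is correct and follows essentially the same route as the paper: the odd ratio is read off directly from Lemma~\ref{lem.R00}, the even ratio is obtained from the Desnanot--Jacobi--Dodgson identity after the off-diagonal terms $D_{1,0}(2n)$ and $D_{0,1}(2n)$ vanish by Lemmas~\ref{lem.E10} and~\ref{lem.E01} and the remaining quotient is identified via Theorem~\ref{thm.KT}, and the product formula then follows from $D_{0,0}(1)=2$. Your additional remarks about working in $\Q(\mu)$ to justify the divisions are a welcome bit of care that the paper leaves implicit.
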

\begin{proof}
Recall that this determinant is due to George Andrews~\cite{Andrews79}. In order
to put it into our context, we give an alternative proof.
If $n$ is even, we apply the Desnanot-Jacobi-Dodgson identity~\eqref{eq.DJD} to get 
\begin{align*}
  D_{0,0}(n+1)D_{1,1}(n-1) &= D_{0,0}(n)D_{1,1}(n) -\cancelto{0}{D_{0,1}(n)}\cancelto{0}{D_{1,0}(n)}, \\
  \frac{D_{0,0}(n+1)}{D_{0,0}(n)} &= \frac{D_{1,1}(n)}{D_{1,1}(n-1)},
\end{align*}
from which the claimed formula follows by using Theorem~\ref{thm.KT}.
The claims, $D_{0,1}(n) =0$ and $D_{1,0}(n) =0$, were stated in Lemma
\ref{lem.E10} and Lemma \ref{lem.E01}.
If $n$ is odd, the result is a direct consequence of Lemma~\ref{lem.R00}.
For the product formula, note that $D_{0,0}(1)=2$.
\end{proof}

\begin{prop} \label{prop.D10}
We have $D_{1,0}(2n+1) / D_{1,0}(2n-1) = R_{1,0}(n)$ where
\[
  R_{1,0}(n) \colonequal
  -\frac{
    \bigl(\mu+2n\bigr)_{\!n} \,
    \bigl(\mu+2n+1\bigr)_{\!n-1}
    \left(\frac{\mu}{2}+2n+\frac{1}{2}\right)^{\!2}_{\!n-1}
  }{
    \bigl(n\bigr)_{\!n} \,
    \bigl(n\bigr)_{\!n-1}
    \left(\frac{\mu}{2}+n+\frac{1}{2}\right)^{\!2}_{\!n-1}
  }.
\]
Moreover,
\[
  D_{1,0}(n) = \begin{cases}
    0, & \text{if $n$ is even}, \\
    \prod_{i=1}^{(n-1)/2} R_{1,0}(i), & \text{if $n$ is odd}. 
  \end{cases}
\]
\end{prop}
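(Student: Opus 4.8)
The plan is to extract the ratio $D_{1,0}(2n+1)/D_{1,0}(2n-1)$ from two instances of the Desnanot--Jacobi--Dodgson identity~\eqref{eq.DJD}, both taken at the corner $(s,t)=(1,0)$ of the doubly infinite matrix $\bigl(\delta_{ij}+\binom{\mu+i+j-2}{j}\bigr)_{i,j\in\Z}$ (so that $M_{s,t}(n)=D_{s,t}(n)$), and in both cases to kill a term using the vanishing $D_{1,0}(2n)=0$ of Lemma~\ref{lem.E10}. Taking~\eqref{eq.DJD} with corner $(1,0)$ and size $2n+1$, the summand $M_{1,0}(2n)=D_{1,0}(2n)$ on the right-hand side drops out, leaving
\[
  D_{1,0}(2n+1)\,D_{2,1}(2n-1) = -\,D_{2,0}(2n)\,D_{1,1}(2n).
\]
Taking~\eqref{eq.DJD} again with the same corner $(1,0)$ but with the even size $2n$, the leading factor $M_{1,0}(2n)=D_{1,0}(2n)$ on the left-hand side now vanishes, so the identity reduces to
\[
  D_{1,0}(2n-1)\,D_{2,1}(2n-1) = D_{2,0}(2n-1)\,D_{1,1}(2n-1).
\]

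The point of choosing this particular corner is that the otherwise unknown determinant $D_{2,1}(2n-1)$ appears as a factor in both relations. Dividing the first identity by the second therefore cancels it and gives
\[
  \frac{D_{1,0}(2n+1)}{D_{1,0}(2n-1)} = -\,\frac{D_{2,0}(2n)}{D_{2,0}(2n-1)}\cdot\frac{D_{1,1}(2n)}{D_{1,1}(2n-1)}.
\]
Both factors on the right are already available in closed form: the first is Lemma~\ref{lem.R20} and the second is the third expression of Theorem~\ref{thm.KT}. Substituting them, the two copies of $\bigl(\tfrac{\mu}{2}+2n+\tfrac12\bigr)_{n-1}$ in the numerators and of $\bigl(\tfrac{\mu}{2}+n+\tfrac12\bigr)_{n-1}$ in the denominators pair up into the squared Pochhammer factors, the products $(\mu+2n)_n(\mu+2n+1)_{n-1}$ and $(n)_n(n)_{n-1}$ assemble from the remaining factors, and the single minus sign is inherited from the first identity; the result is precisely the claimed $R_{1,0}(n)$, with essentially no further simplification required.

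The ``moreover'' part then follows by telescoping. For even $n$ the vanishing $D_{1,0}(n)=0$ is exactly Lemma~\ref{lem.E10}. For odd $n=2m+1$ one writes $D_{1,0}(2m+1)=D_{1,0}(1)\prod_{i=1}^{m}R_{1,0}(i)$ and uses the base value $D_{1,0}(1)=1$, which is immediate from Definition~\ref{def.Dst} because the sole matrix entry is $\binom{\mu-1}{0}=1$; since $(n-1)/2=m$, this is the asserted product.

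I do not anticipate a serious obstacle in the execution: once the two Desnanot--Jacobi--Dodgson instances are set up at the corner $(1,0)$, the cancellation of $D_{2,1}(2n-1)$ is automatic and the Pochhammer arithmetic is short. The genuinely delicate step is the choice itself---realizing that pairing the sizes $2n+1$ and $2n$ at this corner makes the unknown $D_{2,1}$ drop out and leaves only quantities governed by Lemma~\ref{lem.R20} and Theorem~\ref{thm.KT}. The only formal point to check is that the division is valid over $\Q(\mu)$, i.e.\ that $D_{2,1}(2n-1)$ and $D_{1,0}(2n-1)$ are not the zero polynomial; the latter follows inductively from $D_{1,0}(1)=1\neq0$ together with $R_{1,0}(n)\neq0$.
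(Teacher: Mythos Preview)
Your proposal is correct and follows essentially the same route as the paper: the same two instances of~\eqref{eq.DJD} at the corner $(1,0)$ with sizes $2n+1$ and $2n$, the same cancellation of $D_{2,1}(2n-1)$ after invoking Lemma~\ref{lem.E10}, and the same appeal to Theorem~\ref{thm.KT} and Lemma~\ref{lem.R20}. Your added remark on the nonvanishing of $D_{1,0}(2n-1)$ and $D_{2,1}(2n-1)$ over $\Q(\mu)$ is a welcome point of rigor that the paper leaves implicit.
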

\begin{proof}
By applying~\eqref{eq.DJD} twice we obtain
\begin{align*}
  D_{1,0}(2n+1)D_{2,1}(2n-1) &= \cancelto{0}{D_{1,0}(2n)}D_{2,1}(2n) -D_{1,1}(2n)D_{2,0}(2n), \\
  \cancelto{0}{D_{1,0}(2n)}{D_{2,1}(2n-2)} &= D_{1,0}(2n-1)D_{2,1}(2n-1) -D_{1,1}(2n-1)D_{2,0}(2n-1).
\end{align*}
We then combine these two equations to get
\[
  \frac{D_{1,0}(2n+1)}{D_{1,0}(2n-1)}
  = -\frac{D_{1,1}(2n)}{D_{1,1}(2n-1)} \cdot \frac{D_{2,0}(2n)}{D_{2,0}(2n-1)},
\]
from which the formula for $R_{1,0}(n)$ follows, by invoking
Theorem~\ref{thm.KT} and Lemma~\ref{lem.R20}. The fact $D_{1,0}(2n)=0$ was
already stated in Lemma~\ref{lem.E10}.
\end{proof}

\begin{prop} \label{prop.D01}
We have $D_{0,1}(2n+1) / D_{0,1}(2n-1) = R_{0,1}(n)$ where
\[
  R_{0,1}(n) \colonequal
  -\frac{
    \bigl(\mu+2n-2\bigr)_{\!n+2} \,
    \bigl(\mu+2n+1\bigr)_{\!n-1}
    \left(\frac{\mu}{2}+2n+\frac{1}{2}\right)^{\!2}_{\!n-1}
  }{
    \bigl(n\bigr)_{\!n+2} \,
    \bigl(n\bigr)_{\!n-1}
    \left(\frac{\mu}{2}+n+\frac{1}{2}\right)^{\!2}_{\!n-1}
  }.
\]
Moreover,
\[
  D_{0,1}(n) = \begin{cases}
    0, & \text{if $n$ is even}, \\
    (\mu-1)\cdot\prod_{i=1}^{(n-1)/2} R_{0,1}(i), & \text{if $n$ is odd}. 
  \end{cases}
\]
\end{prop}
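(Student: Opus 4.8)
The plan is to mimic the proof of Proposition~\ref{prop.D10} closely, exploiting the fact that $D_{0,1}$ is the mirror image of $D_{1,0}$; the role played there by $D_{2,0}$ will here be played by $D_{0,2}$. First I would invoke the Desnanot-Jacobi-Dodgson identity~\eqref{eq.DJD} with upper-left corner at $(s,t)=(0,1)$, which reads
\[
  D_{0,1}(m)\,D_{1,2}(m-2) = D_{0,1}(m-1)\,D_{1,2}(m-1) - D_{1,1}(m-1)\,D_{0,2}(m-1).
\]
Specializing once to $m=2n+1$ and once to $m=2n$, and using the vanishing $D_{0,1}(2n)=0$ from Lemma~\ref{lem.E01} in both instances, annihilates the entire left-hand side of the $m=2n$ equation and the first summand on the right-hand side of the $m=2n+1$ equation. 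This leaves the pair
\[
  D_{0,1}(2n+1)\,D_{1,2}(2n-1) = -D_{1,1}(2n)\,D_{0,2}(2n), \qquad D_{0,1}(2n-1)\,D_{1,2}(2n-1) = D_{1,1}(2n-1)\,D_{0,2}(2n-1).
\]

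Next I would divide the first equation by the second. The unknown auxiliary determinant $D_{1,2}(2n-1)$ cancels, yielding
\[
  \frac{D_{0,1}(2n+1)}{D_{0,1}(2n-1)} = -\frac{D_{1,1}(2n)}{D_{1,1}(2n-1)}\cdot\frac{D_{0,2}(2n)}{D_{0,2}(2n-1)}.
\]
Substituting the closed form for $D_{1,1}(2n)/D_{1,1}(2n-1)$ from Theorem~\ref{thm.KT} and the one for $D_{0,2}(2n)/D_{0,2}(2n-1)$ from Lemma~\ref{lem.R02} produces an explicit product of Pochhammer symbols. The remaining work is to simplify this product to the claimed $R_{0,1}(n)$; the decisive cancellations are $(\mu+2n)_n/(\mu+2n) = (\mu+2n+1)_{n-1}$ and $(2n-1)/(n)_n = 1/(n)_{n-1}$, which together turn the irregular factor $(2n-1)(\mu+2n)_n/\bigl((n)_n(\mu+2n)\bigr)$ into $(\mu+2n+1)_{n-1}/(n)_{n-1}$, exactly matching the extra factors appearing in $R_{0,1}(n)$.

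Finally, for the ``moreover'' part the case of even $n$ is immediate from Lemma~\ref{lem.E01}, while for odd $n$ the recurrence just derived telescopes to $D_{0,1}(2m+1) = D_{0,1}(1)\prod_{i=1}^{m}R_{0,1}(i)$. The base case is a one-by-one determinant, $D_{0,1}(1) = \binom{\mu-1}{1} = \mu-1$, which supplies the prefactor in the stated formula. I do not anticipate a genuine obstacle, since every ingredient is already in place; the only nonroutine step is the Pochhammer bookkeeping. The single point to watch is that Lemma~\ref{lem.R02} carries the irregular factors $(2n-1)$ and $(\mu+2n)$ (reflecting the $n=1$ anomaly noted in its proof), so one must verify that these combine cleanly with the Theorem~\ref{thm.KT} factors rather than leaving behind a spurious rational factor.
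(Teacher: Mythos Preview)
Your proposal is correct and follows essentially the same route as the paper: apply \eqref{eq.DJD} at $(s,t)=(0,1)$ for sizes $2n+1$ and $2n$, use Lemma~\ref{lem.E01} to kill the $D_{0,1}(2n)$ terms, divide to cancel $D_{1,2}(2n-1)$, and then invoke Theorem~\ref{thm.KT} and Lemma~\ref{lem.R02}. The paper omits the Pochhammer bookkeeping that you spell out, but the argument is otherwise identical.
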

\begin{proof}
By applying~\eqref{eq.DJD} twice we obtain
\begin{align*}
  D_{0,1}(2n+1)D_{1,2}(2n-1) &= \cancelto{0}{D_{0,1}(2n)}D_{1,2}(2n) -D_{1,1}(2n)D_{0,2}(2n), \\
  \cancelto{0}{D_{0,1}(2n)}{D_{1,2}(2n-2)} &= D_{0,1}(2n-1)D_{1,2}(2n-1) -D_{1,1}(2n-1)D_{0,2}(2n-1).
\end{align*}
As before, we combine these two equations to get
\[
  \frac{D_{0,1}(2n+1)}{D_{0,1}(2n-1)}
  =-\frac{D_{1,1}(2n)}{D_{1,1}(2n-1)} \cdot \frac{D_{0,2}(2n)}{D_{0,2}(2n-1)},
\]
from which the formula for $R_{0,1}(n)$ follows, by invoking
Theorem~\ref{thm.KT} and Lemma~\ref{lem.R02}. The fact $D_{0,1}(2n)=0$ was
already stated in Lemma~\ref{lem.E01}. The product formula is obtained by
observing that $D_{0,1}(1)=\mu-1$.
\end{proof}

As an aside, we want to mention that our original plan was to use the quotient
of the two consecutive determinants $D_{-1,1}(2n+1)$ and $D_{-1,1}(2n)$, which
also factors nicely. However, we did not succeed in applying the holonomic
ansatz to solve this problem. More precisely, we were not able to guess a
holonomic description for the corresponding~$c_{n,j}$. Nevertheless, using our
other results, we can now state:
\begin{cor}\label{cor.Dm11}
\[
  \frac{D_{-1,1}(2n+1)}{D_{-1,1}(2n)} =
  \frac{
    (2n-1) \, \bigl(\mu+2n-2\bigr)_{\!n+2}
    \left(\frac{\mu}{2}+2n+\frac{1}{2}\right)_{\!n-1}
  }{
    (\mu+2n) \, \bigl(n\bigr)_{\!n+2}
    \left(\frac{\mu}{2}+n+\frac{1}{2}\right)_{\!n-1}
  }.
\]
\end{cor}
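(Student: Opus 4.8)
The plan is to sidestep the holonomic ansatz entirely---which, as just noted, resists a direct guess for the relevant $c_{n,j}$---and instead extract the quotient purely from the Desnanot-Jacobi-Dodgson identity~\eqref{eq.DJD} together with the vanishing result of Lemma~\ref{lem.E01} and the closed form of Lemma~\ref{lem.R02}. The key observation is that $D_{-1,1}$ sits at the top-left corner of a DJD triple whose ``inner'' determinant is precisely $D_{0,2}$: taking $s=-1$ and $t=1$ in~\eqref{eq.DJD} gives $M_{s,t}=D_{-1,1}$, $M_{s+1,t+1}=D_{0,2}$, $M_{s+1,t}=D_{0,1}$, and $M_{s,t+1}=D_{-1,2}$. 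Since one of the two cross terms is a $D_{0,1}$ determinant, and we already know that $D_{0,1}$ of even size vanishes, there is every reason to hope that a suitable choice of the outer size collapses the identity to a clean two-term relation.

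Concretely, I would instantiate~\eqref{eq.DJD} at the \emph{odd} outer size $2n+1$, which yields
\[
  D_{-1,1}(2n+1)\,D_{0,2}(2n-1) = D_{-1,1}(2n)\,D_{0,2}(2n) - D_{0,1}(2n)\,D_{-1,2}(2n).
\]
The crucial point is that the subtracted term carries the factor $D_{0,1}(2n)$, which is zero by Lemma~\ref{lem.E01}. Hence the entire second summand drops out, and (viewing everything as an identity of polynomials in $\mu$, then passing to rational functions) dividing by $D_{-1,1}(2n)\,D_{0,2}(2n-1)$ leaves
\[
  \frac{D_{-1,1}(2n+1)}{D_{-1,1}(2n)} = \frac{D_{0,2}(2n)}{D_{0,2}(2n-1)}.
\]
The right-hand side is exactly the quotient evaluated in Lemma~\ref{lem.R02}, so substituting that closed form finishes the proof. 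Reassuringly, the formula asserted in the corollary coincides term for term with the expression of Lemma~\ref{lem.R02}, which confirms this is the correct route; note in particular that the spurious $D_{-1,2}$ determinant---for which we have no evaluation---never needs to be touched.

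The only genuine decision in this argument is selecting the right instantiation of~\eqref{eq.DJD}: one must recognize that placing the corner at $(s,t)=(-1,1)$ and choosing an odd outer size makes the unwanted cross term inherit an even-size $D_{0,1}$ factor, forcing it to vanish. Everything after that is a one-line division, so I anticipate no computational obstacle and no new computer-algebra certificate, since all the heavy lifting has already been carried out in Lemmas~\ref{lem.E01} and~\ref{lem.R02}. The one bookkeeping detail I would verify is that every determinant appearing has an admissible size---in particular $D_{0,2}(2n-1)$ with $2n-1\geq1$---and that the factors divided out are nonzero as polynomials in $\mu$, so that the quotient identity is legitimate for all $n\geq1$.
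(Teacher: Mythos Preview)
Your proposal is correct and essentially identical to the paper's own proof: both instantiate \eqref{eq.DJD} at $(s,t)=(-1,1)$ with outer size $2n+1$, kill the cross term via $D_{0,1}(2n)=0$, and read off the quotient from Lemma~\ref{lem.R02}. The only cosmetic difference is that the paper cites Proposition~\ref{prop.D01} (which in turn invokes Lemma~\ref{lem.E01}) for the vanishing, whereas you cite Lemma~\ref{lem.E01} directly.
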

\begin{proof}
The assertion follows from
\[
  D_{-1,1}(2n+1)D_{0,2}(2n-1) = {D_{-1,1}(2n)}D_{0,2}(2n) -\cancelto{0}{D_{0,1}(2n)}D_{-1,2}(2n)
\]
by applying Proposition~\ref{prop.D01} and Lemma~\ref{lem.R02}.
\end{proof}

\section{Nice Closed Form for \texorpdfstring{$D_{1,1}(n)$}{D11(n)}}\label{sec.nice}

From Propositions~\ref{prop.D00}, \ref{prop.D10}, \ref{prop.D01} we have now
the values of $D_{0,0}(n), D_{1,0}(n)$, and $D_{0,1}(n)$ at our disposal, and
we will use them to derive, for the first time, a kind of a closed-form for
the mysterious determinant $D_{1,1}(n)$. In Figure~\ref{fig.tilings} it is
shown what kind of rhombus tilings are counted by~$D_{1,1}(n)$.  Once again,
we will use the Desnanot-Jacobi-Dodgson identity~\eqref{eq.DJD} (see
p.~\pageref{eq.DJD}) to glue the previous results together. By doing so, we
obtain a recurrence equation for~$D_{1,1}(n)$:
\[
  D_{0,0}(n)D_{1,1}(n-2) = D_{0,0}(n-1)D_{1,1}(n-1)-D_{1,0}(n-1)D_{0,1}(n-1).
\]
We replace $n$ with $n+1$, divide by $D_{0,0}(n)$, and apply Proposition~\ref{prop.D00}:
\[
  D_{1,1}(n) = R_{0,0}(n) D_{1,1}(n-1) + \frac{D_{1,0}(n)D_{0,1}(n)}{D_{0,0}(n)}.
\]
Since by Lemmas~\ref{lem.E01} and~\ref{lem.E10} $D_{0,1}(n) = D_{1,0}(n) = 0$
for even~$n$, the recurrence in this case simplifies:
\[
  D_{1,1}(n) = R_{0,0}(n) D_{1,1}(n-1) \qquad (n \text{ even}).
\]
For odd~$n$, using the Propositions~\ref{prop.D00}, \ref{prop.D10}, and \ref{prop.D01}, we obtain:
\begin{align*}
  D_{1,1}(n) &= R_{0,0}(n) D_{1,1}(n-1) + (\mu-1) \frac{\left(\prod_{j=1}^{(n-1)/2} R_{1,0}(j)\right)
    \left(\prod_{j=1}^{(n-1)/2} R_{0,1}(j)\right)}{2\prod_{j=1}^{n-1} R_{0,0}(j)} \\
  &= R_{0,0}(n) D_{1,1}(n-1) + \frac{(\mu-1)}{2} \prod_{j=1}^{(n-1)/2} \frac{R_{1,0}(j) R_{0,1}(j)}{R_{0,0}(2j-1)R_{0,0}(2j)}
  \qquad (n \text{ odd}).
\end{align*}
Splitting $R_{0,0}(i)$ into even and odd is reasonable, since it is anyway
defined differently for these cases.  Now, by unrolling the recurrence, we get
a ``closed form'', namely an explicit single sum expression, for $D_{1,1}(n)$:
\begin{equation}\label{eq.cf1}
  D_{1,1}(n) = \prod_{j=1}^n R_{0,0}(j) + \frac{(\mu-1)}{2} \sum_{k=1}^{\lfloor(n+1)/2 \rfloor}
  \left(\prod_{j=2k}^n R_{0,0}(j)\right) \left(\prod_{j=1}^{k-1} \frac{R_{1,0}(j) R_{0,1}(j)}{R_{0,0}(2j-1)R_{0,0}(2j)} \right)
\end{equation}

\begin{lem} \label{lem.PR1001}
\[
  \prod_{j=1}^{k-1} \frac{R_{1,0}(j) R_{0,1}(j)}{R_{0,0}(2j-1)R_{0,0}(2j)} =
  \frac{\bigl(\mu\bigr)_{\!3k-3}}{(2k-1)! \, \left(\frac{\mu}{2}+k-\frac{1}{2}\right)_{\!k-1}}
  \left(\prod_{j=1}^{k-1}
    \frac{\bigl(\mu+2j+1\bigr)_{\!j-1} \left(\frac{\mu}{2}+2j+\frac{1}{2}\right)_{\!j-1}}
         {\bigl(j\bigr)_{\!j-1} \left(\frac{\mu}{2}+j+\frac{1}{2}\right)_{\!j-1}}
  \right)^{\!\!2}.
\]
\end{lem}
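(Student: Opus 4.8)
The plan is to prove this by direct substitution and simplification, since after Propositions~\ref{prop.D00}, \ref{prop.D10}, and~\ref{prop.D01} every quantity on the left-hand side is an explicit ratio of Pochhammer symbols; no determinant remains, so the statement is a purely elementary (if bookkeeping-heavy) identity between hypergeometric products. First I would substitute the four closed forms $R_{1,0}(j)$, $R_{0,1}(j)$, $R_{0,0}(2j)$, and $R_{0,0}(2j-1)$ into the single factor
\[
  \frac{R_{1,0}(j)\,R_{0,1}(j)}{R_{0,0}(2j-1)\,R_{0,0}(2j)}
\]
indexed by $j$, and simplify it before taking the product over $j=1,\dots,k-1$. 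The two leading minus signs of $R_{1,0}$ and $R_{0,1}$ multiply to $+1$, so the whole expression is sign-free, and the factor $(\mu+2j)_j$ occurs in both numerator and denominator and cancels at once.

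Next I would sort the surviving symbols into three groups and reduce each with the elementary quotient rule $(x)_a/(x)_b=(x+b)_{a-b}$. In the ``$\mu$-group'' the quotient $(\mu+2j-2)_{j+2}/(\mu+2j-2)_{j-1}$ collapses to the three linear factors $(\mu+3j-3)(\mu+3j-2)(\mu+3j-1)$, while a clean square $(\mu+2j+1)_{j-1}^{2}$ survives. In the ``integer group'' the ordinary Pochhammers combine to $(j)_{j-1}^{-2}\cdot\bigl((2j)(2j+1)\bigr)^{-1}$, using $(j)_j/(j)_{j+2}=1/\bigl((2j)(2j+1)\bigr)$. In the ``half-integer group'' one is left with $(\tfrac{\mu}{2}+2j+\tfrac12)_{j-1}^{2}\,(\tfrac{\mu}{2}+j+\tfrac12)_{j-1}^{-2}$ times the leftover ratio $(\tfrac{\mu}{2}+2j+\tfrac12)_{j-1}(\tfrac{\mu}{2}+j-\tfrac12)_{j-1}\big/\bigl((\tfrac{\mu}{2}+j+\tfrac12)_{j-1}(\tfrac{\mu}{2}+2j-\tfrac12)_{j}\bigr)$. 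At this point the per-$j$ factor splits into exactly the square appearing on the right-hand side, namely $\bigl((\mu+2j+1)_{j-1}(\tfrac{\mu}{2}+2j+\tfrac12)_{j-1}\big/((j)_{j-1}(\tfrac{\mu}{2}+j+\tfrac12)_{j-1})\bigr)^{2}$, multiplied by three ``leftover'' pieces that telescope.

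Finally I would evaluate the three leftover products over $j=1,\dots,k-1$. The blocks $\{3j-3,3j-2,3j-1\}$ tile $\{0,1,\dots,3k-4\}$, so $\prod_j (\mu+3j-3)(\mu+3j-2)(\mu+3j-1)=(\mu)_{3k-3}$; the blocks $\{2j,2j+1\}$ tile $\{2,\dots,2k-1\}$, so $\prod_j (2j)(2j+1)=(2k-1)!$; and the half-integer leftover, after using $(\tfrac{\mu}{2}+j-\tfrac12)_{j-1}/(\tfrac{\mu}{2}+j+\tfrac12)_{j-1}=(\tfrac{\mu}{2}+j-\tfrac12)/(\tfrac{\mu}{2}+2j-\tfrac32)$ and $(\tfrac{\mu}{2}+2j+\tfrac12)_{j-1}/(\tfrac{\mu}{2}+2j-\tfrac12)_{j}=1/(\tfrac{\mu}{2}+2j-\tfrac12)$, collapses to $(\tfrac{\mu}{2}+\tfrac12)_{k-1}/(\tfrac{\mu}{2}+\tfrac12)_{2k-2}=1/(\tfrac{\mu}{2}+k-\tfrac12)_{k-1}$. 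Assembling these three evaluations reproduces the prefactor $(\mu)_{3k-3}\big/\bigl((2k-1)!\,(\tfrac{\mu}{2}+k-\tfrac12)_{k-1}\bigr)$ and completes the proof. I expect the only real obstacle to be organizational: keeping the two interleaved half-integer progressions straight so that the leftover denominator really collapses to $(\tfrac{\mu}{2}+\tfrac12)_{2k-2}$. A safe alternative that sidesteps the telescoping is induction on $k$: the base case $k=1$ is trivial (both sides equal $1$), and the induction step reduces to checking that the ratio of consecutive right-hand sides equals the single factor $R_{1,0}(k)R_{0,1}(k)\big/\bigl(R_{0,0}(2k-1)R_{0,0}(2k)\bigr)$, which is a routine rational-function identity in $k$ and $\mu$.
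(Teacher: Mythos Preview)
Your proposal is correct and follows essentially the same route as the paper: substitute the explicit Pochhammer formulas for $R_{1,0}(j)$, $R_{0,1}(j)$, $R_{0,0}(2j-1)$, $R_{0,0}(2j)$, simplify the single factor indexed by~$j$ so that the square $\bigl((\mu+2j+1)_{j-1}(\tfrac{\mu}{2}+2j+\tfrac12)_{j-1}/((j)_{j-1}(\tfrac{\mu}{2}+j+\tfrac12)_{j-1})\bigr)^2$ separates off, and then telescope the leftover linear factors. The paper records the simplified per-$j$ factor and then simply says ``by taking the product of this last expression, we get the asserted formula''; your write-up spells out the three telescoping products $(\mu)_{3k-3}$, $(2k-1)!$, and $1/(\tfrac{\mu}{2}+k-\tfrac12)_{k-1}$ that the paper suppresses, but the underlying argument is identical.
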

\begin{proof}
First, we investigate the factor inside the product:
\begin{align*}
  & \kern-20pt \frac{R_{1,0}(j) R_{0,1}(j)}{R_{0,0}(2j-1)R_{0,0}(2j)} = {}\\
  &= \frac{
    \bigl(j\bigr)_{\!j} \,
    \bigl(\mu+2j-2\bigr)_{\!j+2} \,
    \bigl(\mu+2j+1\bigr)^{\!2}_{\!j-1}
    \left(\frac{\mu}{2}+j-\frac{1}{2}\right)_{\!j-1}
    \left(\frac{\mu}{2}+2j+\frac{1}{2}\right)^{\!3}_{\!j-1}
  }{
    \bigl(j\bigr)^{\!2}_{\!j-1} \,
    \bigl(j\bigr)_{\!j+2} \,
    \bigl(\mu+2j-2\bigr)_{\!j-1}
    \left(\frac{\mu}{2}+j+\frac{1}{2}\right)^{\!3}_{\!j-1}
    \left(\frac{\mu }{2}+2j-\frac{1}{2}\right)_{\!j}
  }
  \\
  &= \frac{
    (\mu+2j-1) (\mu+3j-3) (\mu+3j-2) (\mu+3j-1) \,
    \bigl(\mu+2j+1\bigr)^{\!2}_{\!j-1}
    \left(\frac{\mu}{2}+2j+\frac{1}{2}\right)^{\!2}_{\!j-1}
  }{
    j\, (2j+1) (\mu+4j-3) (\mu+4j-1) \,
    \bigl(j\bigr)^{\!2}_{\!j-1}
    \left(\frac{\mu}{2}+j+\frac{1}{2}\right)^{\!2}_{\!j-1}
  }
\end{align*}
By taking the product of this last expression, we get the asserted formula.
\end{proof}

\begin{thm} \label{thm.D11}
Let $\mu$ be an indeterminate and let $D_{1,1}$ be defined as in
Definition~\ref{def.Dst}. Let $\rho_k$ be defined as $\rho_0(a,b)=a$ and
$\rho_k(a,b)=b$ for $k>0$. If $n$ is an odd positive integer then
\begin{align*}
  D_{1,1}(n) &=
  \sum_{k=0}^{(n+1)/2} \!\!\! \rho_k{\left(4(\mu-2),\frac{1}{(2k-1)!}\right)}
    \frac{\bigl(\mu-1\bigr)_{\!3k-2}}{2 \left(\frac{\mu}{2}+k-\frac{1}{2}\right)_{\!k-1}}
  \left(\prod_{j=1}^{k-1}
    \frac{\bigl(\mu+2j+1\bigr)_{\!j-1} \left(\frac{\mu}{2}+2j+\frac{1}{2}\right)_{\!j-1}}
         {\bigl(j\bigr)_{\!j-1} \left(\frac{\mu}{2}+j+\frac{1}{2}\right)_{\!j-1}}
  \right)^{\!\!2}
  \\ & \qquad\qquad\times
  \left(\prod_{j=k}^{(n-1)/2}
    \frac{
      \bigl(\mu+2j\bigr)^{\!2}_{\!j}
      \left(\frac{\mu}{2}+2j-\frac{1}{2}\right)_{\!j}
      \left(\frac{\mu}{2}+2j+\frac{3}{2}\right)_{\!j+1}
    }{
      \bigl(j\bigr)_{\!j} \, \bigl(j+1\bigr)_{\!j+1}
      \left(\frac{\mu}{2}+j+\frac{1}{2}\right)^{\!2}_{\!j}
    }\right).
\end{align*}
If $n$ is an even positive integer then
\begin{align*}
  D_{1,1}(n) &=
  \sum_{k=0}^{n/2} \rho_k{\left(4(\mu-2),\frac{1}{(2k-1)!}\right)}
    \frac{\bigl(\mu-1\bigr)_{\!3k-2}}{2 \left(\frac{\mu}{2}+k-\frac{1}{2}\right)_{\!k-1}}
  \left(\prod_{j=1}^{k-1}
    \frac{\bigl(\mu+2j+1\bigr)_{\!j-1} \left(\frac{\mu}{2}+2j+\frac{1}{2}\right)_{\!j-1}}
         {\bigl(j\bigr)_{\!j-1} \left(\frac{\mu}{2}+j+\frac{1}{2}\right)_{\!j-1}}
  \right)^{\!\!2}
  \\ & \qquad\quad \times
  \left(\prod_{j=k}^{n/2}
    \frac{
      \bigl(\mu+2j\bigr)_{\!j}
      \left(\frac{\mu}{2}+2j+\frac{1}{2}\right)_{\!j-1}
    }{
      \bigl(j\bigr)_{\!j} \,
      \left(\frac{\mu}{2}+j+\frac{1}{2}\right)_{\!j-1}
    }\right)
  \left(\prod_{j=k}^{n/2-1}
    \frac{
      \bigl(\mu+2j\bigr)_{\!j}
      \left(\frac{\mu}{2}+2j+\frac{3}{2}\right)_{\!j+1}
    }{
      \bigl(j+1\bigr)_{\!j+1}
      \left(\frac{\mu}{2}+j+\frac{1}{2}\right)_{\!j}
    }\right).
\end{align*}
\end{thm}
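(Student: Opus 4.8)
The plan is to start from the single-sum expression~\eqref{eq.cf1} and massage it into the two claimed closed forms by inserting the product evaluations already established and then carefully splitting every product into its even- and odd-indexed factors. First I would substitute the evaluation of the inner product from Lemma~\ref{lem.PR1001} into the sum in~\eqref{eq.cf1}. The prefactor $\frac{\mu-1}{2}$ combines with the Pochhammer symbol $(\mu)_{3k-3}$ appearing there by means of the elementary identity $(\mu-1)\,(\mu)_{3k-3}=(\mu-1)_{3k-2}$, producing exactly the factor $\frac{(\mu-1)_{3k-2}}{2\,(\frac{\mu}{2}+k-\frac12)_{k-1}}$ together with the squared product that appears in Theorem~\ref{thm.D11}. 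At this point the coefficient $\frac{1}{(2k-1)!}$ is visible, which matches $\rho_k\bigl(4(\mu-2),\frac{1}{(2k-1)!}\bigr)$ for every $k\geq1$; the term $k=0$ in the theorem will have to be reconciled separately with the leading product $\prod_{j=1}^n R_{0,0}(j)$ of~\eqref{eq.cf1}.

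The heart of the computation is the tail product $\prod_{j=2k}^n R_{0,0}(j)$. Since $R_{0,0}$ is defined by two different formulas according to the parity of its argument (Proposition~\ref{prop.D00}), I would separate this product into its even- and odd-indexed factors, writing $\prod_i R_{0,0}(2i)$ and $\prod_i R_{0,0}(2i+1)$ and substituting the explicit formulas. For \emph{even} $n=2m$ the range $2k\leq j\leq 2m$ contains $m-k+1$ even indices and $m-k$ odd indices, and the two resulting subproducts are precisely the two products $\prod_{j=k}^{n/2}(\cdots)$ and $\prod_{j=k}^{n/2-1}(\cdots)$ displayed in the even case. For \emph{odd} $n=2m+1$ the range contains equally many even and odd indices, so the factors pair up as $R_{0,0}(2i)\,R_{0,0}(2i+1)$ for $k\leq i\leq m$; multiplying one even and one odd factor and simplifying collapses them into the single combined product of the odd case. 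The only nontrivial simplification needed here is the Pochhammer shift $(\frac{\mu}{2}+j+\frac12)_j=(\frac{\mu}{2}+2j-\frac12)\,(\frac{\mu}{2}+j+\frac12)_{j-1}$, which turns the product of the two half-integer Pochhammer symbols into the numerator factor $(\frac{\mu}{2}+2j-\frac12)_j$ and the denominator factor $(\frac{\mu}{2}+j+\frac12)_j^2$ seen in the odd case.

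It remains to treat the boundary term $k=0$. Here I would evaluate the degenerate (negative-index) Pochhammer symbols directly from the definition: $(\mu-1)_{-2}=\bigl((\mu-3)(\mu-2)\bigr)^{-1}$ and $(\frac{\mu}{2}-\frac12)_{-1}=\frac{2}{\mu-3}$, so that the coefficient $\rho_0\bigl(4(\mu-2),\cdot\bigr)=4(\mu-2)$, multiplied by $\frac{(\mu-1)_{-2}}{2(\frac{\mu}{2}-\frac12)_{-1}}$, collapses to $1$, the empty squared product being $1$ as well. One then checks that the remaining tail product, now running from $j=0$, reproduces $\prod_{j=1}^n R_{0,0}(j)$: concretely the $j=0$ factor equals $R_{0,0}(1)=\frac{\mu+3}{2}$ (the conventions give $R_{0,0}(0)=1$), and the higher factors supply $\prod_{j=2}^n R_{0,0}(j)$, which together match the leading term of~\eqref{eq.cf1} exactly. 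I expect the main obstacle to be organizational rather than conceptual: keeping the even/odd index bookkeeping consistent across the two parities of $n$, making sure the upper limits of the split products land on $\lfloor (n-1)/2\rfloor$, $n/2$, and $n/2-1$ correctly, and verifying that the degenerate Pochhammer symbols at $k=0$ behave as the formula demands. Once these points are settled, the identity follows by direct comparison of the two expressions.
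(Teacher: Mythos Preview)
Your proposal is correct and follows essentially the same route as the paper: start from~\eqref{eq.cf1}, insert Lemma~\ref{lem.PR1001} (absorbing the prefactor $\tfrac{\mu-1}{2}$ into $(\mu-1)_{3k-2}$), split $\prod_{j=2k}^n R_{0,0}(j)$ according to the parity of the index using Proposition~\ref{prop.D00}, and verify that the $k=0$ term with the chosen value $\rho_0=4(\mu-2)$ reproduces the leading product $\prod_{j=1}^n R_{0,0}(j)$. Your explicit evaluation of the degenerate Pochhammer symbols at $k=0$ and the Pochhammer shift used to merge the two subproducts in the odd case are exactly the ``minor simplifications'' the paper alludes to without spelling out.
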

\begin{proof}
Starting from formula~\eqref{eq.cf1} we want to derive the asserted evaluation
of the determinant~$D_{1,1}(n)$.  By noting that $R_{0,0}(0)=1$ we can write
$\prod_{j=1}^n R_{0,0}(j) = \prod_{j=0}^n R_{0,0}(j)$, which allows us to
include it as a first summand into the sum, with some little adaption: the sum
is multiplied by the factor $(\mu-1)/2$, which is missing in the first
term. Moreover, when we want to set $k=0$ in the expression given in
Lemma~\ref{lem.PR1001}, the factorial $(2k-1)!$ in the denominator is
disturbing. Last but not least, when we multiply this expression by $(2k-1)!$
and then set $k=0$, we get $1/\bigl(2(\mu-1)(\mu-2)\bigr)$, and not~$1$.  All
these cases are taken care of by introducing the following $\rho_k$ term:
\[
  \rho_k{\left(2(\mu-1)(\mu-2),\frac{\mu-1}{2(2k-1)!}\right)} =
  \frac{\mu-1}{2} \cdot \rho_k{\left(4(\mu-2),\frac{1}{(2k-1)!}\right)}.
\]
By writing
\[
  \prod_{j=2k}^n R_{0,0}(j) = \left(\prod_{j=k}^{\lfloor n/2\rfloor} R_{0,0}(2j)\right) \left(\prod_{j=k}^{\lfloor (n-1)/2\rfloor} R_{0,0}(2j+1)\right)
\]
we can apply Proposition~\ref{prop.D00}. After putting everything together, and after
some minor simplifications, we obtain the formulas stated in the theorem.
\end{proof}

This derivation not only yields a new, and relatively nice, formula for
$D_{1,1}(n)$, but also explains the emergence of the ``ugly'' polynomial
factor.

\section{Proof of the Monstrous Conjecture}\label{sec.monst}

This section deals with the proof of our own conjecture concerning~$D_{1,1}(n)$.
In a previous paper~\cite{KoutschanThanatipanonda13}, we conjectured that for
every positive integer~$n$ we have
\[
  D_{1,1}(n) = \det_{1\leq i,j\leq n}\left(\delta_{i,j} + \binom{\mu+i+j-2}{j}\right) = 
  \textstyle C(n) F(n) G\!\left(\left\lfloor\frac12(n+1)\right\rfloor\right)
\]
where the quantities $C(n)$, $F(n)$, and $G(n)$ are defined as follows
\begin{align*}
C(n) & = \frac{(-1)^n+3}{2}\prod_{i=1}^n\frac{\left\lfloor\frac{i}{2}\right\rfloor !}{i!},\\
F(n) & = \begin{cases}  
    E(n) F_0(n), & \text{if}\ n\ \text{is even},\\
    \displaystyle E(n) F_1(n) \prod_{i=1}^{(n-5)/2} \!\! (\mu+2i+2n-1), & \text{if}\ n\ \text{is odd},
  \end{cases}\\
E(n) & = (\mu+1)_n 
  \Biggl(\prod_{i=1}^{\lfloor 3/2\lfloor(n-1)/2\rfloor-2\rfloor} \!\!
    \bigl(\mu+2i+6\bigr)^{2\lfloor(i+2)/3\rfloor}\Biggr)\\
  & \qquad \times
  \Biggl(\prod_{i=1}^{\lfloor 3/2\lfloor n/2\rfloor-2\rfloor} \!\!
    \Big(\mu+2i+{\textstyle 2\left\lfloor\frac32\left\lfloor\frac{n}{2}+1\right\rfloor\right\rfloor}-1
    \Big)^{2\lfloor \lfloor n/2\rfloor/2-(i-1)/3\rfloor-1}\Biggr),\\
F_m(n) & = \Biggl(\prod_{i=1}^{\lfloor(n-1)/4\rfloor} \!\!\! \bigl(\mu+2i+n+m\bigr)^{1-2i-m}\Biggr)
  \Biggl(\prod_{i=1}^{\lfloor n/4-1\rfloor} \! \bigl(\mu-2i+2n-2m+1\bigr)^{1-2i-m}\Biggr),\\
G(n) & = \begin{cases}
  P_1\!\left(\frac12(n+1)\right), & \text{if}\ n\ \text{is odd},\\
  P_2\!\left(\frac{n}{2}\right), & \text{if}\ n\ \text{is even}.
  \end{cases}
\end{align*}

$P_1(n)$ and $P_2(n)$ are polynomials in $\mu$, whose definition is quite
involved and therefore not reproduced here. However, it is important to note
that they satisfy, respectively, second-order recurrence relations. Actually,
they were originally found as solutions of these guessed recurrences.

In order to prove our conjecture, we investigate the expression
$D_{1,1}(n)/\bigl(C(n) F(n)\bigr)$, so that the goal is to show that this
expression equals $G\!\left(\left\lfloor\frac12(n+1)\right\rfloor\right)$ for
any positive integer~$n$.  For this purpose, we rewrite the single-sum
expression for $D_{1,1}(n)$ given in Theorem~\ref{thm.D11} by splitting some
of the Pochhammer symbols, so that they either produce factors of the form
$(\mu+2i)$ or $(\mu+2i-1)$, at the cost of introducing some floor functions.
For example, for even~$n$ we obtain:
\begin{multline*}
  \sum_{k=0}^{n/2} \rho_k{\left(\mu-2,\frac{1}{(2k-1)!}\right)}
    \frac{2^{n^2/4-k(k+1)+1} \bigl(\mu-1\bigr)_{\!3k-2}}{\left(\frac{\mu}{2}+k-\frac{1}{2}\right)_{\!k-1}}
  \left(\prod_{j=1}^{k-1}
    \frac{\left(\frac{\mu}{2}+2j+\frac{1}{2}\right)_{\!j-1} \left(\frac{\mu}{2}+j+1\right)_{\!j-2}}
         {\left(\frac{1}{2}\right)_{\!j-1} \bigl(\mu+3j\bigr)_{\!j-2}}
  \right)^{\!\!2}
  \\ \times
  \left(\prod_{j=k}^{n/2}
    \frac{
      \left(\frac{\mu}{2}+j\right)_{\!\lfloor (j+1)/2\rfloor}
      \left(\frac{\mu}{2}+2j+\frac{1}{2}\right)_{\!j-1}
    }{
      \bigl(j\bigr)_{\!j} \,
      \left(\frac{\mu}{2}+\left\lfloor\frac{3j}{2}\right\rfloor+\frac{1}{2}\right)_{\!\lfloor(j-1)/2\rfloor }
    }\right)
  \left(\prod_{j=k}^{n/2-1}
    \frac{
      \left(\frac{\mu}{2}+j\right)_{\!\lfloor(j+1)/2\rfloor }
      \left(\frac{\mu}{2}+2j+\frac{3}{2}\right)_{\!j+1}
    }{
      \bigl(j+1\bigr)_{\!j+1}
      \left(\frac{\mu}{2}+\left\lfloor\frac{3j}{2}\right\rfloor+\frac{1}{2}\right)_{\!\lfloor(j+1)/2\rfloor }
    }\right).
\end{multline*}
Next, we replace all products of the form $\prod_{j=k}^{n/2-c}f(j)$ by the
quotient $\bigl(\prod_{j=1}^{n/2-c}f(j)\bigr)/\bigl(\prod_{j=1}^{k-1}f(j)\bigr)$
(plus some correction for the case $k=0$). Then we can move those factors that
do not depend on~$k$ outside the summation sign. In order to handle the floor
functions, we make a case distinction according to the residue class of $n$
modulo~$4$.  We start by inspecting the case that $n$ is divisible by~$4$,
i.e., $n=4\ell$, $\ell\in\N$; then we have
\begin{align*}
 \frac{D_{1,1}(n)}{C(n) F(n)} &=
 \underbrace{
   \frac{
     \left(\prod_{j=1}^{2\ell-1} \left(\frac{\mu}{2}+j\right)_{\!\lfloor (j+1)/2\rfloor}\right) 
     \left(\prod_{j=1}^{2\ell} \left(\frac{\mu}{2}+j\right)_{\!\lfloor (j+1)/2\rfloor}\right)
   }{
     \left(\prod_{i=1}^{3\ell-4} (\mu+2i+6)^{2\lfloor(i+2)/3\rfloor}\right)
     \left(\prod_{i=1}^{\ell-1} (\mu+4\ell+2i)^{1-2i}\right)
   }
 }_{
   \textstyle = 2^{-(\ell-1)(2\ell-1)} \left(\frac{\mu}{2}+1\right)_{\!2\ell-1} \left(\frac{\mu}{2}+1\right)_{\!2\ell}
 }
 \\[2ex]
 & \qquad\times\quad
 \underbrace{
   \frac{\displaystyle
     \Biggl(\prod_{j=1}^{2\ell}
     \frac{\left(\frac{\mu}{2}+2j+\frac{1}{2}\right)_{\!j-1}}
          {\left(\frac{\mu}{2}+\left\lfloor \frac{3j}{2}\right\rfloor+\frac{1}{2} \right)_{\!\lfloor(j-1)/2\rfloor}}
     \Biggr)
     \Biggl(\prod_{j=1}^{2\ell-1}
     \frac{\left(\frac{\mu}{2}+2j+\frac{3}{2}\right)_{\!j+1}}
          {\left(\frac{\mu}{2}+\left\lfloor \frac{3j}{2}\right\rfloor+\frac{1}{2} \right)_{\!\lfloor (j+1)/2\rfloor}}
     \Biggr)
   }{
     \left(\prod_{i=1}^{3\ell-2} \left(\mu+6\ell+2i+1\right)^{2\ell\lfloor (1-i)/3\rfloor+1}\right)
     \left(\prod_{i=1}^{\ell-1} (\mu+8\ell-2i+1)^{1-2i}\right)
   }
 }_{
   \textstyle = \frac{1}{\mu+3} 2^{-2\ell(\ell-1)} \left(\frac{\mu}{2}+3\ell+\frac{1}{2}\right)_{\!3\ell-1}
 }
 \\[1ex]
 & \qquad\times\quad
 \underbrace{
   \frac{
     1
   }{
     \left(\prod_{j=1}^{2\ell} \bigl(j\bigr)_j\right)
     \left(\prod_{j=1}^{2\ell-1} \bigl(1+j\bigr)_{1+j}\right)
     \left(\prod_{i=1}^{4\ell} \frac{\left\lfloor i/2\right\rfloor !}{i!}\right)
   }
 }_{
   \textstyle = 2^{2\ell}
 }
 \quad\cdot\quad
 \underbrace{
   \frac{1}{2 \, \bigl(1+\mu\bigr)_{\!4\ell} \vphantom{\left(\prod_{i=1}^{4\ell} \frac{\left\lfloor i/2\right\rfloor !}{i!}\right)}}
 }_{
   \textstyle\kern-30pt
   = 2^{-4\ell-1} \left(\frac{\mu}{2}+\frac{1}{2}\right)^{\!-1}_{\!2\ell} \left(\frac{\mu}{2}+1\right)^{\!-1}_{\!2\ell}
   \kern-70pt
 }
 \quad\cdot\quad
 \sum_{k=0}^{2\ell}\bigl(\dots\bigr)
 \\[2ex]
 &= 2^{-4\ell^2+3\ell-2} \,
   \frac{\left(\frac{\mu}{2}+1\right)_{\!2\ell-1} \left(\frac{\mu}{2}+3\ell+\frac{1}{2}\right)_{\!3\ell-1}}
        {(\mu+3) \left(\frac{\mu}{2}+\frac{1}{2}\right)_{\!2\ell}} \,
   \sum_{k=0}^{2\ell}\bigl(\dots\bigr)
\end{align*}

Next, we treat the expression inside the sum, which was abbreviated by
$\bigl(\dots\bigr)$ in the previous calculation. Again, we separate ``even''
and ``odd'' factors by
\[
  \textstyle\bigl(\mu+3j\bigr)_{\!j-2} = 2^{j-2}
  \left(\frac{\mu}{2} + \bigl\lfloor \frac{3j}{2}+\frac{1}{2} \bigr\rfloor\right)_{\!\lfloor (j-2)/2\rfloor}
  \left(\frac{\mu}{2} + \bigl\lfloor \frac{3j}{2} \bigr\rfloor +\frac{1}{2}\right)_{\!\lfloor (j-1)/2\rfloor}.
\]
Then we can simplify as follows:
\begin{align*}
 \bigl(\dots\bigr) &=
   2^{4\ell^2-k^2-k+1} \, \rho_k{\left(\frac{1}{2} (\mu-2) (\mu+3),\frac{1}{(2k-1)!}\right)} \,
   \frac{\bigl(\mu-1\bigr)_{\!3k-2}}{\left(\frac{\mu}{2}+k-\frac{1}{2}\right)_{\!k-1}}
 \\[2ex]
 & \qquad\times\quad
 \underbrace{
   \prod_{j=1}^{k-1} 2^{4-2j}
 }_{
   \textstyle \kern-30pt = 2^{-4+5k-k^2} \rho_k(16,1) \kern-30pt
 }
 \quad\cdot\quad
 \underbrace{
   \prod_{j=1}^{k-1} \frac{\left(\frac{\mu}{2}+j+1\right)^{\!2}_{\!j-2}}
        {\left(\frac{\mu}{2}+j\right)^{\!2}_{\!\lfloor(j+1)/2\rfloor}
         \left(\frac{\mu}{2}+\left\lfloor\frac{3j}{2}+\frac{1}{2}\right\rfloor\right)^{\!2}_{\!\lfloor(j-2)/2\rfloor}}
 }_{
   \textstyle = \rho_k{\left(4\mu^{-2},1\right)}\left(\frac{\mu}{2}+1\right)^{\!-2}_{\!k-1}
 }
 \\[2ex]
 & \qquad\times\quad
 \underbrace{
   \prod_{j=1}^{k-1}
   \frac{
     \left(\frac{\mu}{2}+\left\lfloor\frac{3j}{2}\right\rfloor+\frac{1}{2}\right)_{\!\lfloor(j+1)/2\rfloor}
     \left(\frac{\mu}{2}+2j+\frac{1}{2}\right)_{\!j-1}
   }{
     \left(\frac{\mu}{2}+2j+\frac{3}{2}\right)_{\!j+1}
     \left(\frac{\mu}{2}+\left\lfloor\frac{3j}{2}\right\rfloor+\frac{1}{2}\right)_{\!\lfloor(j-1)/2\rfloor}
   }
 }_{
   \textstyle = \rho_k{\left(1,\frac{\mu+3}{2}\right)} \left(\frac{\mu}{2}+2k-\frac{1}{2}\right)^{\!-1}_{\!k}
 }
 \quad\cdot\quad
 \underbrace{
   \prod _{j=1}^{k-1} \frac{\bigl(j\bigr)_{\!j} \, \bigl(j+1\bigr)_{\!j+1}}{\left(\frac{1}{2}\right)^{\!2}_{\!j-1}}
 }_{
   \textstyle \kern-20pt
   = \rho_k{\left(\frac{1}{8},1\right)} 2^{2 k (k-1)} \left(\frac{3}{2}\right)_{\!k-1} \left(\frac{1}{2}\right)^{\!2}_{\!k-1}
   \kern-60pt
 }
 \\[2ex]
 &= 2^{4\ell^2+2k-3} \, \rho_k{\left(\frac{4(\mu-2)}{\mu^2},\frac{1}{2(2k-1)!}\right)} \,
 \frac{
   (\mu+3)\left(\frac{1}{2}\right)^{\!2}_{\!k-1}
   \left(\frac{3}{2}\right)_{\!k-1} \,
   \bigl(\mu-1\bigr)_{\!3k-2}
 }{
   \left(\frac{\mu}{2}+1\right)^{\!2}_{\!k-1}
   \left(\frac{\mu}{2}+k-\frac{1}{2}\right)_{\!k-1}
   \left(\frac{\mu}{2}+2k-\frac{1}{2}\right)_{\!k}
 }
\end{align*}

Putting everything together yields the following expression for $D_{1,1}(4\ell)/\bigl(C(4\ell) F(4\ell)\bigr)$:
\begin{equation} \label{eq.quot4}
  \sum_{k=0}^{2\ell} 2^{3\ell+k-2} \, \rho_k{\left(\frac{\mu-2}{\mu^2},\frac{1}{8(2k-2)!!}\right)} \,
  \frac{
    \bigl(\mu-1\bigr)_{\!3k-2}
    \left(\frac{1}{2}\right)^{\!2}_{\!k-1}
    \left(\frac{\mu}{2}+k\right)_{\!2\ell-k}
    \left(\frac{\mu}{2}+3\ell+\frac{1}{2}\right)_{3\ell-1}
  }{
    \left(\frac{\mu}{2}+\frac{1}{2}\right)_{\!2\ell}
    \left(\frac{\mu}{2}+1\right)_{\!k-1}
    \left(\frac{\mu}{2}+k-\frac{1}{2}\right)_{\!k-1} 
    \left(\frac{\mu}{2}+2k-\frac{1}{2}\right)_{\!k}
  }.
\end{equation}

We now have to show that \eqref{eq.quot4} equals
$G{\left(\left\lfloor\frac12(n+1)\right\rfloor\right)} = G(2\ell) =
P_2(\ell)$.  We do this by showing that \eqref{eq.quot4} satisfies the same
recurrence as $P_2(\ell)$.  Since we have the case distinction at $k=0$ given
by $\rho_k$, we split the sum as follows:
\[
  \sum_{k=0}^{2\ell} f(\ell,k) = \sum_{k=1}^{2\ell} f(\ell,k) + f(\ell,0),
\]
with
\[
  f(\ell,k) =
  \frac{
    2^{3 \ell+k-5} \bigl(\mu-1\bigr)_{\!3k-2} \left(\frac{1}{2}\right)^{\!2}_{\!k-1}
    \left(\frac{\mu}{2}+k\right)_{\!2\ell-k} \left(\frac{\mu}{2}+3\ell+\frac{1}{2}\right)_{\!3\ell-1}
  }{
    (2k-2)!! \left(\frac{\mu}{2}+\frac{1}{2}\right)_{\!2\ell}
    \left(\frac{\mu}{2}+1\right)_{\!k-1} \left(\frac{\mu}{2}+k-\frac{1}{2}\right)_{\!k-1}
    \left(\frac{\mu}{2}+2k-\frac{1}{2}\right)_{\!k}
  }.
\]
Next, we note that $f(\ell,k)$ satisfies the first-order recurrence
$p_1(\ell) f(\ell+1,k) + p_0(\ell) f(\ell,k)$ with
\begin{align*}
  p_1(\ell) &= (\mu +4 \ell+1) (\mu +4 \ell+3) (\mu +6 \ell+1) (\mu +6 \ell+3) (\mu +6 \ell+5) \\
  p_0(\ell) &= -(\mu +4 \ell) (\mu +4 \ell+2) (\mu +12 \ell-1) (\mu +12 \ell+1) (\mu +12 \ell+3) \\
  &\qquad \times (\mu +12 \ell+5) (\mu +12 \ell+7) (\mu +12 \ell+9)
\end{align*}
whose coefficients $p_0(\ell)$ and $p_1(\ell)$ are both free of~$k$.
Employing operator notation, where $S_\ell$ denotes the shift operator
w.r.t.~$\ell$ and $\bullet$ denotes operator application, we can write:
\begin{align*}
  0 &= \sum_{k=1}^{2\ell} \bigl(p_1(\ell)S_{\ell}+p_0(\ell)\bigr)\bullet f(\ell,k) \\
  &= \bigl(p_1(\ell)S_{\ell}+p_0(\ell)\bigr)\bullet\sum_{k=1}^{2\ell} f(\ell,k)
  - p_1(\ell) \bigl(f(\ell+1,2\ell+1) + f(\ell+1,2\ell+2)\bigr).
\end{align*}
Note that $f(\ell+1,2\ell+1) + f(\ell+1,2\ell+2)$ is a hypergeometric term,
and hence satisfies a first-order recurrence.  In other words, it is
annihilated by some operator of the form $q_1(\ell)S_{\ell}+q_0(\ell)$.  By an
explicit computation, we find
\begin{align*}
 q_1(\ell) &= (\ell+1) (2 \ell+3) (\mu+4 \ell+4) (\mu+4 \ell+6) (\mu+8 \ell+3) 
 \bigl(2 \mu^5 \ell+\mu^5+152 \mu^4 \ell^2+ \dots
 +420\bigr), \\
 q_0(\ell) &= -8 (4 \ell+1)^2 (4 \ell+3)^2 (\mu+6 \ell) (\mu+6 \ell+2) (\mu+6 \ell+4)
 (\mu+8 \ell+11) \bigl(2 \mu^5 \ell+ \dots
 +797916\bigr),
\end{align*}
where the dots hide, for the convenience of the reader, two irreducible
polynomials that are unhandy to display (each of them is several lines long).

It follows that $\sum_{k=1}^{2\ell} f(\ell,k)$ is annihilated by the product of the two operators
\begin{align*}
  A &= \bigl(q_1(\ell)S_{\ell}+q_0(\ell)\bigr)\cdot\bigl(p_1(\ell)S_{\ell}+p_0(\ell)\bigr) \\
  &= p_1(\ell+1)q_1(\ell)S_{\ell}^2 + \bigl(p_0(\ell+1)q_1(\ell) + p_1(\ell)q_0(\ell)\bigr)S_{\ell} + p_0(\ell)q_0(\ell).
\end{align*}

By a quick computer calculation, we can verify that this operator~$A$ also
annihilates $f(\ell,0)$, namely that the first-order operator killing
$f(\ell,0)$ is a right factor of~$A$, and hence $A$ annihilates also the sum
$\sum_{k=0}^{2\ell} f(\ell,k)$. We compare the operator~$A$ with the operator
that we guessed previously and whose solution yielded the family of
polynomials $P_2(\ell)$.  We find that both operators are identical. A routine
calculation confirms that \eqref{eq.quot4} equals $P_2(\ell)$ for $\ell=1$ and
$\ell=2$. This completes the proof, for the case $n=4\ell$, that the
conjectured formula in~\cite{KoutschanThanatipanonda13} agrees with the (much
simpler) formula that we derived in Section~\ref{sec.nice}.

We have to continue and treat the cases $n=4\ell-1$, $n=4\ell-2$, and $n=4\ell-3$ individually.
They can be done analogously, and we spare the reader from the details of the calculations,
which can be found in~\cite{EM}. To conclude, let
\[
  h(n,k) = 2^k \rho_k{\left(\frac{\mu-2}{\mu^2},\frac{1}{8(2k-2)!!}\right)}
  \frac{
    \left(\frac{1}{2}\right)^{\!2}_{\!k-1} \, \bigl(\mu-1\bigr)_{\!3k-2}
  }{
    \left(\frac{\mu}{2}+1\right)_{\!k-1}
    \left(\frac{\mu}{2}+k-\frac{1}{2}\right)_{\!k-1}
    \left(\frac{\mu}{2}+2k-\frac{1}{2}\right)_{\!k}
  }
\]
(this is the common factor that appears in all four cases). Using this notation,
we obtain the following result:
\[
  \frac{D_{1,1}(n)}{C(n) F(n)} =
  \begin{cases}
  \rule[-20pt]{0pt}{20pt}\displaystyle\sum_{k=0}^{n/2} 2^{(3n-8)/4} h(n,k)
  \frac{\left(\frac{\mu}{2}+k\right)_{\!n/2-k} \left(\frac{\mu}{2}+\frac{3n}{4}+\frac{1}{2}\right)_{\!(3n-4)/4}}
       {\left(\frac{\mu}{2}+\frac{1}{2}\right)_{\!n/2}}, &
  n\equiv0\!\mod4
  \\
  \rule[-20pt]{0pt}{20pt}\displaystyle\sum_{k=0}^{n/2} 2^{(3n-6)/4} h(n,k)
  \frac{\left(\frac{\mu}{2}+k\right)_{\!n/2-k} \left(\frac{\mu}{2}+\frac{3n}{4}\right)_{\!(3n-2)/4}}
       {\left(\frac{\mu}{2}+\frac{1}{2}\right)_{\!n/2}}, &
  n\equiv2\!\mod4
  \\
  \rule[-20pt]{0pt}{20pt}\displaystyle\sum_{k=0}^{(n+1)/2} 2^{(3n-3)/4} h(n,k)
  \frac{\left(\frac{\mu}{2}+k\right)_{\!(n+1)/2-k} \left(\frac{\mu}{2}+\frac{3n}{4}+\frac{3}{4}\right)_{\!(3n+1)/4}}
       {\left(\frac{\mu}{2}+\frac{1}{2}\right)_{\!(n+1)/2}}, &
  n\equiv1\!\mod4
  \\
  \displaystyle\sum_{k=0}^{(n+1)/2} 2^{(3n-5)/4} h(n,k)
  \frac{\left(\frac{\mu}{2}+k\right)_{\!(n+1)/2-k} \left(\frac{\mu}{2}+\frac{3n}{4}+\frac{5}{4}\right)_{\!(3n-1)/4}}
       {\left(\frac{\mu}{2}+\frac{1}{2}\right)_{\!(n+1)/2}}, &
  n\equiv3\!\mod4
  \end{cases}
\]

The above equations can be viewed as an alternative closed form for
$D_{1,1}(n)$.  In particular, they give nicer formulas for the ``ugly''
polynomials $P_1(n)$ and $P_2(n)$, compared to the ones presented
in~\cite{KoutschanThanatipanonda13}.

\section{The General Determinant}\label{sec.gen}

We now want to study the general determinant~$D_{s,t}(n)$, of which the
results in Section~\ref{sec.lemmas} were just special cases.  Indeed, once
several instances of $D_{s,t}(n)$ are settled, it is a natural question to ask
what happens for other values of~$s$ and~$t$. Unfortunately, it seems that
there is no nice formula for general~$s$ and~$t$, but at least we can identify
some infinite families of determinants that give nice evaluations.  Before
stating our results, we give a schematic overview.  We classify several
infinite families of determinants of the form $D_{s,t}(n)$ according to their
factorization properties. Notice that not all of them are proved. In this
context, a polynomial (or rational function) is called ``nice'' if it factors
completely.
\begin{center}
\begin{tabular}{c|ll}
Family & Property & Reference \\ \hline
 0   & $D_{s,t}(n)=0$ & Proposition~\ref{prop.fam0} \rule{0pt}{12pt} \\
 A   & $D_{s,t}(n)$ is nice & Theorem~\ref{thm.famA} \\
A$'$ & $D_{s,t}(n)$ is nice & Corollary~\ref{cor.Dmrmr} \\ 
 B   & $D_{s,t}(2n-1)$ is nice, $D_{s,t}(2n)=0$ & Theorem~\ref{thm.famB} \\
 C   & $D_{s,t}(2n)$ is nice & Conjecture~\ref{conj.famC} \\
 D & $D_{s,t}(2n)$ is nice & Conjecture~\ref{conj.famD} \\
 E   & $D_{s,t}(2n)/D_{s,t}(2n-1)$ is nice & Corollary~\ref{cor.famE} \\
 F   & $D_{s,t}(2n+1)/D_{s,t}(2n)$ is nice & Corollary~\ref{cor.famF}
\end{tabular}
\end{center}

The distribution of these families in the $s$-$t$-plane is shown below; bold
entries mark cases that have been treated in Sections~\ref{sec.lemmas}
and~\ref{sec.nice}. The empty places correspond to choices for $(s,t)$ for
which neither $D_{s,t}(n)$ nor any of its successive quotients is nice.
\begin{center}
{
\setlength{\tabcolsep}{0pt}
\begin{tabular}{c|cccccccccccc}
$t\setminus s\ $ &
$\ \cdots$ &
\parbox{16pt}{\centerline{$-3$}} &
\parbox{16pt}{\centerline{$-2$}} &
\parbox{16pt}{\centerline{$-1$}} &
\parbox{16pt}{\centerline{$0$}} &
\parbox{16pt}{\centerline{$1$}} &
\parbox{16pt}{\centerline{$2$}} &
\parbox{16pt}{\centerline{$3$}} &
\parbox{16pt}{\centerline{$4$}} &
\parbox{16pt}{\centerline{$5$}} &
\parbox{16pt}{\centerline{$6$}} &
$\cdots$
\\ \hline
$\vdots$ & & & & $\vdots$ & $\vdots$ & $\vdots$ & & & & & \\
$6$  & &      &      &  D   & A     & C     &       &   &   &   &   & \\
$5$  & &      &      &  F   & B     & E     &       &   &   &   &   & \\
$4$  & &      &      &  D   & A     & C     &       &   &   &   &   & \\
$3$  & &      &      &  F   & B     & E     &       &   &   &   &   & \\
$2$  & &      &      &  D   & \bf A & C     &       &   &   &   &   & \\
$1$  & &      &      &  F   & \bf B & \bf E & C     & E & C & E & C & $\cdots$ \\
$0$  & &      &      &  D   & \bf A & \bf B & \bf A & B & A & B & A & $\cdots$ \\
$-1$ & &      &      & A$'$ & 0     & 0     & 0     & 0 & 0 & 0 & 0 & $\cdots$ \\
$-2$ & &      & A$'$ &  0   & 0     & 0     & 0     & 0 & 0 & 0 & 0 & $\cdots$ \\
$-3$ & & A$'$ &  0   &  0   & 0     & 0     & 0     & 0 & 0 & 0 & 0 & $\cdots$ \\[-1ex]
$\vdots$ & $\iddots$ & $\iddots$ & $\vdots$ & $\vdots$ & $\vdots$ & $\vdots$ &
$\vdots$ & $\vdots$ & $\vdots$ & $\vdots$ & $\vdots$ & $\ddots$
\end{tabular}
}
\end{center}

Since in these families one of the parameters $s,t$ goes to infinity, we
encounter the situation that for small~$n$ the determinant~$D_{s,t}(n)$
reduces to a simple one, namely one where only the binomial coefficient but
not the Kronecker delta is present. This determinant is well-known, but for
sake of completeness we include it here; also its proof is very simple
(compare also \cite[Sec.~2.3]{Krattenthaler99}).
\begin{prop}\label{prop.bindet}
For $n,s,t\in\Z$, $t\geq 0, n\geq1$, and $\mu$ an indeterminate, we have that
\[
  \det_{\genfrac{}{}{0pt}{}{s \leq i < s+n}{t \leq j < t+n}}
  \left( { \binom{\mu+i+j-2}{j}}\right) =
  \prod_{i=0}^{t-1} \dfrac{\left(\mu+s+i-1 \right)_n}{\left(i+1 \right)_n} =: G_{s,t}(n).
\]
\end{prop}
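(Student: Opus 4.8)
The plan is to strip the matrix down to a Vandermonde determinant by two successive factorizations, one per column and one per row. First I would rewrite the entry as a rising factorial: since $\binom{\mu+i+j-2}{j} = (\mu+i-1)_j / j!$, I can pull the factor $1/j!$ out of the $j$-th column, obtaining
\[
  \det_{\substack{s\le i<s+n\\ t\le j<t+n}}\left(\binom{\mu+i+j-2}{j}\right)
  = \Biggl(\prod_{j=t}^{t+n-1}\frac{1}{j!}\Biggr)
    \det_{\substack{s\le i<s+n\\ t\le j<t+n}}\bigl((\mu+i-1)_j\bigr).
\]
At this point the $(i,j)$-entry is a polynomial of degree $j$ in $i$, so the column degrees run from $t$ to $t+n-1$ rather than from $0$ to $n-1$; the key idea is to correct this by a row factorization.

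Using the splitting $(\mu+i-1)_{j} = (\mu+i-1)_t\,(\mu+i+t-1)_{j-t}$, valid for $j\ge t\ge 0$, I would factor $(\mu+i-1)_t$ out of the $i$-th row. Writing $j=t+k$ with $0\le k\le n-1$, the remaining determinant has $(i,k)$-entry $(\mu+i+t-1)_k$, a \emph{monic} polynomial of degree $k$ in the row variable, so that
\[
  \det_{\substack{s\le i<s+n\\ t\le j<t+n}}\bigl((\mu+i-1)_j\bigr)
  = \Biggl(\prod_{i=s}^{s+n-1}(\mu+i-1)_t\Biggr)
    \det_{\substack{s\le i<s+n\\ 0\le k<n}}\bigl((\mu+i+t-1)_k\bigr).
\]
Because the surviving polynomials are monic of degrees $0,1,\dots,n-1$, the standard column-reduction argument (subtract suitable multiples of earlier columns; see \cite[Sec.~2.3]{Krattenthaler99}) replaces each $(\mu+i+t-1)_k$ by the pure power $(\mu+i+t-1)^k$ without changing the determinant. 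What remains is the Vandermonde determinant in the nodes $y_i=\mu+i+t-1$, which evaluates to $\prod_{s\le i<i'\le s+n-1}(y_{i'}-y_i)=\prod_{s\le i<i'\le s+n-1}(i'-i)=\prod_{k=1}^{n-1}k!$, since the $n$ nodes are consecutive integers shifted by $\mu+t-1$.

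It then remains to reassemble the three scalar factors and match them against $G_{s,t}(n)$. The row product reindexes cleanly: interchanging the two products gives $\prod_{i=s}^{s+n-1}(\mu+i-1)_t = \prod_{r=0}^{t-1}(\mu+s+r-1)_n$, which is exactly the numerator of $G_{s,t}(n)$. The only remaining work is the factorial bookkeeping, namely verifying
\[
  \frac{\prod_{k=1}^{n-1}k!}{\prod_{j=t}^{t+n-1}j!} = \prod_{i=0}^{t-1}\frac{1}{(i+1)_n},
\]
which, after using $(i+1)_n=(i+n)!/i!$ and cross-multiplying, reduces to the identity $\prod_{k=1}^{t+n-1}k! = \prod_{k=0}^{t+n-1}k!$, true because $0!=1$. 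I expect this last factorial comparison to be the only genuinely fiddly point, though it is entirely routine; the conceptual crux is the pair of factorizations that shifts the column degrees down to $0,\dots,n-1$ and thereby exposes the Vandermonde structure. Combining the three factors yields $G_{s,t}(n)$, completing the proof.
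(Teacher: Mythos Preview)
Your argument is correct and complete. The route, however, is genuinely different from the paper's. The paper proves the identity by induction on~$n$ via the Desnanot--Jacobi--Dodgson identity: one checks the base cases $n=1,2$ directly and then verifies that the closed form $G_{s,t}(n)$ satisfies
\[
  G_{s,t}(n) \, G_{s+1,t+1}(n-2) = G_{s,t}(n-1)\,G_{s+1,t+1}(n-1) - G_{s+1,t}(n-1)\,G_{s,t+1}(n-1),
\]
which is a routine rational-function identity. Your proof instead performs an explicit column/row factorization to reduce to a Vandermonde determinant in consecutive nodes; this is more direct and makes the product structure of the answer visible from the outset, at the price of a small amount of factorial bookkeeping at the end. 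The paper's DJD approach is uniform with the rest of the article (where \eqref{eq.DJD} is the workhorse) but treats the closed form as a black box to be verified; your approach actually \emph{derives} it. Both are short, and either would serve here.
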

\begin{proof}
We perform induction on $n$, using~\eqref{eq.DJD} (see p.~\pageref{eq.DJD}).
It is routine to check that the statement is true for the base cases $n=1$ and $n=2$,
and that
\[
  G_{s,t}(n) = \frac{G_{s,t}(n-1)G_{s+1,t+1}(n-1)-G_{s+1,t}(n-1)G_{s,t+1}(n-1)}{G_{s+1,t+1}(n-2)}.
\]
\end{proof}

\begin{cor}[Family~A$'$]\label{cor.Dmrmr}
Let $r\geq0$ be an integer, and let $D_{s,t}(n)$ be the determinant defined
in Definition~\ref{def.Dst}. Then the following holds:
\[
  D_{-r,-r}(n) = \begin{cases}
    D_{0,0}(n-r), & \text{if } r < n, \\
    1, & \text{if } r \geq n.
  \end{cases}
\]
\end{cor}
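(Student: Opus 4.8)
The plan is to exploit the fact that the column index $j$ appears as the lower argument of the binomial coefficient, together with the block structure this induces. For $D_{-r,-r}(n)$ the row and column indices both run over the common set $\{-r,-r+1,\dots,n-r-1\}$, and the $(i,j)$-entry equals $\delta_{ij}+\binom{\mu+i+j-2}{j}$. The decisive observation is that whenever $j$ is a negative integer the binomial $\binom{\mu+i+j-2}{j}$ vanishes, since a binomial coefficient with a negative integer lower index is zero under the standard convention (equivalently, the factor $1/\Gamma(j+1)$ is $0$ for $j\in\{-1,-2,\dots\}$). Consequently, in each of the $r$ columns with $j\in\{-r,\dots,-1\}$ the only surviving contribution comes from the Kronecker delta, so such a column is the standard basis vector with its single $1$ placed in the row $i=j$.

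First I would dispose of the degenerate case $r\geq n$. Here the upper bound $n-r-1$ is negative, so every index in $\{-r,\dots,n-r-1\}$ is negative and every binomial entry vanishes. The matrix then reduces to $(\delta_{ij})$ on a common index set for rows and columns, i.e.\ to the identity matrix, giving $D_{-r,-r}(n)=1$ as claimed.

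For the main case $r<n$ I would split the common index set into the negative part $\{-r,\dots,-1\}$ (of size $r$) and the non-negative part $\{0,1,\dots,n-r-1\}$ (of size $n-r$), ordering indices increasingly. Writing the matrix in the corresponding block form $\left(\begin{smallmatrix} A & B \\ C & D \end{smallmatrix}\right)$, the vanishing of the negative-index binomials pins down the two left blocks: the top-left block $A$ equals the $r\times r$ identity (diagonal Kronecker deltas), while the bottom-left block $C$ is zero, because its rows carry indices $i\geq 0$ and its columns indices $j<0$, so $\delta_{ij}=0$ throughout. The matrix is therefore block upper-triangular, and its determinant factors as $\det(A)\det(D)=\det(D)$.

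It then remains to identify the bottom-right block $D$. Its rows and columns are indexed by $\{0,1,\dots,n-r-1\}$, and its $(i,j)$-entry is exactly $\delta_{ij}+\binom{\mu+i+j-2}{j}$, which is precisely the defining entry of $D_{0,0}(n-r)$ in Definition~\ref{def.Dst}. Hence $\det(D)=D_{0,0}(n-r)$, completing the argument. The only point requiring care --- and the place a reader might stumble --- is the vanishing of $\binom{\mu+i+j-2}{j}$ for negative integer $j$: one must invoke the standard convention that a binomial coefficient with negative integer lower index is zero, which is the convention implicit throughout the paper. With that convention in hand, every step above is immediate, and no recurrence, computer-algebra computation, or appeal to the earlier lemmas is needed.
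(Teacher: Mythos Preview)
Your proof is correct and rests on the same key observation as the paper's, namely that $\binom{\mu+i+j-2}{j}=0$ for negative integer~$j$. The paper peels off one column at a time via the recursion $D_{-r,-r}(n)=D_{-r+1,-r+1}(n-1)$, whereas you handle all $r$ negative columns at once through the block upper-triangular structure; this is exactly the unrolled form of the paper's argument, so the two proofs are essentially the same.
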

\begin{proof}
For $r=0$ there is nothing to show. For $r>0$ the corresponding matrix has the
first unit vector in its first column. In its lower-right $(n-1)\times(n-1)$
block the entries are the same as in the matrix of $D_{-r+1,-r+1}(n-1)$. Hence
$D_{-r,-r}(n)=D_{-r+1,-r+1}(n-1)$ and by unrolling this recurrence, the
assertion follows.
\end{proof}

\begin{prop}\label{prop.fam0}
$D_{s,t}(n) = 0$ for $t \leq -1$ and $s \geq t+1$.
\end{prop}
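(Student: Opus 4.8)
The plan is to exhibit an identically-zero column in the matrix defining $D_{s,t}(n)$, so that the determinant vanishes trivially. Recall that the rows are indexed by $i\in\{s,s+1,\dots,s+n-1\}$ and the columns by $j\in\{t,t+1,\dots,t+n-1\}$, with $(i,j)$-entry $\delta_{ij}+\binom{\mu+i+j-2}{j}$. I would focus on the leftmost column, namely the one corresponding to $j=t$.

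In that column each entry is $\delta_{i,t}+\binom{\mu+i+t-2}{t}$. The key point is that the lower index of the binomial coefficient equals $t$, which by hypothesis satisfies $t\leq-1$ and is therefore a negative integer. Under the standard convention $\binom{x}{k}=0$ for every integer $k<0$ (equivalently, $1/\Gamma(k+1)=0$ at the nonpositive integer $k+1$), the binomial coefficient $\binom{\mu+i+t-2}{t}$ vanishes identically, regardless of the indeterminate $\mu$ and of the row index $i$.

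It remains to dispose of the Kronecker contribution, and here I would invoke the second hypothesis $s\geq t+1$: every row index satisfies $i\geq s\geq t+1>t$, so $i\neq t$ and hence $\delta_{i,t}=0$ for all $i$ in the row range. Combining the two observations, the entire column $j=t$ is zero, and therefore $D_{s,t}(n)=0$.

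I do not anticipate any real obstacle: the two hypotheses enter in a clean, complementary way, with $t\leq-1$ killing the binomial part of the column and $s\geq t+1$ killing the Kronecker part, so that no induction, no determinant identity, and no computer algebra is required. The only point demanding minor care is to fix the convention for binomial coefficients with a negative integer lower argument, so that the vanishing $\binom{\mu+i+t-2}{t}=0$ is justified rather than merely assumed; once this convention is made explicit, the argument is immediate.
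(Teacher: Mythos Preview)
Your proposal is correct and matches the paper's own proof essentially verbatim: both observe that the column $j=t$ is identically zero because $\binom{\mu+i+t-2}{t}=0$ (negative lower index) and $\delta_{i,t}=0$ for all $i\geq s\geq t+1$. The only addition you make is to spell out the convention that $\binom{x}{k}=0$ for negative integers~$k$, which the paper leaves implicit.
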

\begin{proof}
The first column of the matrix contains only $0$ as $\delta_{i,t}=0$ for all $i \geq t+1$ and 
\[
  \binom{\mu+i+t-2}{t} = 0 \qquad \mbox{ for all } i.
\]
Therefore the determinant is $0$.
\end{proof}  

The following theorem allows us to switch the values of $s$ and~$t$.
Therefore, we will afterwards only concentrate on the cases $s\geq t$.

\begin{thm}\label{thm.Dst2Dts}
For integers $t\geq s\geq0$ and $n\geq1$, and for an indeterminate~$\mu$, we have
\[
  D_{s,t}(n) = \Biggl(\prod_{i=0}^{t-s-1} \frac{\bigl(\mu+i+s-1\bigr)_{\!n}}{\bigl(i+s+1\bigr)_{\!n}}\Biggr) \cdot D_{t,s}(n).
\]
\end{thm}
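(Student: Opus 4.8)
The plan is to prove this by induction on $n$ using the Desnanot-Jacobi-Dodgson identity~\eqref{eq.DJD}, mirroring the strategy already employed in Proposition~\ref{prop.bindet} for the pure binomial determinant $G_{s,t}(n)$. The key structural observation is that the asserted ratio $\prod_{i=0}^{t-s-1}(\mu+i+s-1)_n/(i+s+1)_n$ is precisely $G_{s,t}(n)/G_{t,s}(n)$: indeed, by Proposition~\ref{prop.bindet} we have $G_{s,t}(n) = \prod_{i=0}^{t-1}(\mu+s+i-1)_n/(i+1)_n$ and $G_{t,s}(n) = \prod_{i=0}^{s-1}(\mu+t+i-1)_n/(i+1)_n$, and after telescoping the common factors the quotient collapses to exactly the displayed product over $i=0,\dots,t-s-1$. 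So the theorem is equivalent to the clean statement $D_{s,t}(n)\,G_{t,s}(n) = D_{t,s}(n)\,G_{s,t}(n)$, which is symmetric in form and more amenable to an inductive treatment.

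First I would verify the base cases $n=1$ and $n=2$ by direct computation; for $n=1$ the determinant is a single $(1\times1)$ entry $\delta + \binom{\mu+s+t-2}{t}$ versus $\delta + \binom{\mu+s+t-2}{s}$, and one checks the Kronecker deltas behave compatibly (for $t>s\geq 0$ the diagonal entry requires $i=j$, i.e.\ the corner, so the $\delta$ contributes only when $s=t$, a case excluded by $t>s$ or trivial when $t=s$). Then I would set up the induction: applying~\eqref{eq.DJD} to $D_{s,t}(n)$ expresses it in terms of the four $(n-1)\times(n-1)$ corner determinants $D_{s,t}(n-1)$, $D_{s+1,t+1}(n-1)$, $D_{s+1,t}(n-1)$, $D_{s,t+1}(n-1)$ and the $(n-2)\times(n-2)$ determinant $D_{s+1,t+1}(n-2)$. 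The induction hypothesis lets me rewrite each of these in terms of the corresponding swapped determinant $D_{t,s}(\cdot)$ and the relevant $G$-factors. Substituting into the DJD recurrence for $D_{t,s}(n)$ (which has the same algebraic shape), the goal reduces to a purely rational-function identity among the $G_{s,t}$ and $G_{t,s}$ quantities, which can be discharged using the product formula of Proposition~\ref{prop.bindet} together with its own DJD recurrence.

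The main obstacle, and the step demanding the most care, is bookkeeping the shifted subscripts under the induction hypothesis. The DJD identity moves the corner from $(s,t)$ to neighbours $(s+1,t+1)$, $(s+1,t)$, $(s,t+1)$, and for these to fall within the regime $t\geq s\geq 0$ where the hypothesis applies I must check the inequalities are preserved: $(s+1,t+1)$ and the diagonal case are fine, but $(s+1,t)$ can reach the boundary $s+1=t$ or even $s+1>t$, and $(s,t+1)$ increases the gap. I expect to handle this by treating $(s+1,t)$ with care when $s+1=t$ (where the product in the theorem becomes empty, giving $D_{s+1,t}=D_{t,s+1}$ directly) and by noting that $(s,t+1)$ still satisfies $t+1>s\geq 0$ so the hypothesis applies with a shifted range. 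Once every corner term is expressed through its swapped counterpart and the explicit $G$-ratios, the residual identity is a telescoping Pochhammer manipulation analogous to the verification in Proposition~\ref{prop.bindet}; I would confirm it by checking that both sides satisfy the same first-order recurrence in $n$ and agree at $n=1$, rather than expanding the products by hand.
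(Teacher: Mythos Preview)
Your proposal is correct and follows essentially the same route as the paper: induction on~$n$ with base cases $n=1,2$, then the Desnanot--Jacobi--Dodgson identity applied to $D_{s,t}(n)$, the induction hypothesis on the five smaller determinants, and a Pochhammer simplification that reassembles into DJD for $D_{t,s}(n)$. Your repackaging of the product as $G_{s,t}(n)/G_{t,s}(n)$ is a pleasant cosmetic device (the paper works with the explicit Pochhammer product directly), and your worry about ``$s+1>t$'' is unnecessary---once you assume $t>s$ you automatically have $t\geq s+1$, so the corner $(s+1,t)$ stays in the admissible range, with the boundary case $s+1=t$ giving an empty product as you note.
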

\begin{proof}
We prove the statement by induction on~$n$. The base cases $n=1$ and $n=2$ can
be checked by a routine calculation.  Obviously the statement is true for
$s=t$. Now assume that $t>s$.  Using our ``all-purpose weapon'' \eqref{eq.DJD}
(see p.~\pageref{eq.DJD}), the induction step can be done in a
straight-forward way:
\begin{align*}
  D_{s,t}(n) &= \frac{D_{s,t}(n-1) D_{s+1,t+1}(n-1) - D_{s+1,t}(n-1) D_{s,t+1}(n-1)}{D_{s+1,t+1}(n-2)} \\
  &= \frac{
    \Bigl(D_{t,s}(n-1) D_{t+1,s+1}(n-1) - D_{t,s+1}(n-1) D_{t+1,s}(n-1)\Bigr)
    \cdot \prod\limits_{i=0}^{t-s-1} 
    \frac{\left(\mu+i+s-1\right)_{\!n-1}\left(\mu+i+s\right)_{\!n-1}}
         {\left(i+s+1\right)_{\!n-1}\left(i+s+2\right)_{\!n-1}}
  }{
    D_{t+1,s+1}(n-2) \cdot \prod\limits_{i=0}^{t-s-1} \frac{\left(\mu+i+s\right)_{\!n-2}}{\left(i+s+2\right)_{\!n-2}}
  } \\
  &= \frac{D_{t,s}(n-1) D_{t+1,s+1}(n-1) - D_{t,s+1}(n-1) D_{t+1,s}(n-1)}{D_{t+1,s+1}(n-2)}
  \cdot \prod_{i=0}^{t-s-1} \frac{\bigl(\mu+i+s-1\bigr)_{\!n}}{\bigl(i+s+1\bigr)_{\!n}},
\end{align*}
which is exactly the asserted right-hand side, by applying \eqref{eq.DJD} in
the opposite direction.
\end{proof}

\begin{thm}[Family~A]\label{thm.famA}
Let $\mu$ be an indeterminate and let $r\geq0$ and $n>2r$ be integers. Then
\[
  D_{2r,0}(n) = 2 \cdot\! \prod_{i=2r+1}^{n-1} \! R_{2r,0}(i),
\]
where
\begin{align*}
  R_{2r,0}(2n) &=
   \frac{
    \bigl(\mu+2n+4r\bigr)_{\!n-r}
    \left(\frac{\mu}{2}+2n+r+\frac{1}{2}\right)_{\!n-r-1}
  }{
    \bigl(n-r\bigr)_{\!n-r}
    \left(\frac{\mu}{2}+n+2r+\frac{1}{2}\right)_{\!n-r-1}
  }, \\[1ex]
  R_{2r,0}(2n-1) &=
 \frac{
    \bigl(\mu+2n+4r-2\bigr)_{\!n-r-1}
    \left(\frac{\mu}{2}+2n+r-\frac{1}{2}\right)_{\!n-r}
  }{
    \bigl(n-r\bigr)_{\!n-r}
    \left(\frac{\mu}{2}+n+2r-\frac{1}{2}\right)_{\!n-r-1}
  }.
\end{align*}
Hence, we have $R_{2r,0}(n) = D_{2r,0}(n+1) / D_{2r,0}(n)$. 
\end{thm}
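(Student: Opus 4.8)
The plan is to split the product formula into two logically independent claims and settle each with machinery already in this section. First note that the product telescopes: by the very definition $R_{2r,0}(i)=D_{2r,0}(i+1)/D_{2r,0}(i)$, one has $\prod_{i=2r+1}^{n-1}R_{2r,0}(i)=D_{2r,0}(n)/D_{2r,0}(2r+1)$, so the claimed identity $D_{2r,0}(n)=2\prod_{i=2r+1}^{n-1}R_{2r,0}(i)$ is equivalent to the conjunction of the base case $D_{2r,0}(2r+1)=2$ and the two ratio evaluations of $R_{2r,0}(2n)$ and $R_{2r,0}(2n-1)$. I would establish these two parts separately.

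The base case I would settle directly from Proposition~\ref{prop.bindet}. When $n=2r+1$ the row indices satisfy $2r\leq i\leq 4r$ and the column indices $0\leq j\leq 2r$, so the Kronecker delta $\delta_{ij}$ survives only at the single corner $(i,j)=(2r,2r)$, which sits in the top-right position of the matrix. Applying the expansion $\det(A)=\det(A^-)+(-1)^{i+j}M^i_j$ once at that corner writes $D_{2r,0}(2r+1)$ as $\det\bigl(B_{2r,0}(2r+1)\bigr)$ plus the minor obtained by deleting the top row and the rightmost column, and this minor is precisely $\det\bigl(B_{2r+1,0}(2r)\bigr)$. Because $t=0$ in both, Proposition~\ref{prop.bindet} evaluates each of these pure-binomial determinants as an empty product equal to~$1$, so that $D_{2r,0}(2r+1)=1+1=2$.

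For the ratio evaluations I would run Zeilberger's holonomic ansatz exactly as in the proof of Lemma~\ref{lem.R20}, but now carrying the corner-shift parameter $r$ as a further symbolic variable alongside~$\mu$. Concretely, I would delete the last row of the matrix underlying $D_{2r,0}(n)$, compute for small $n$ the one-dimensional nullspace of the resulting $(n-1)\times n$ matrix, normalise a spanning vector $c_{n,j}$ so that its last component equals~$1$, and then guess recurrences in the indices $n$ and $j$ whose coefficients are polynomials in both $\mu$ and~$r$. With HolonomicFunctions I would certify, for all $n$, that this normalisation holds, that $\sum_{j}a_{i,j}\,c_{n,j}=0$ for every non-final row index~$i$ (writing $a_{i,j}$ for the matrix entries), and finally that the dot product of the last row with $c_{n,j}$ equals the asserted quotient of Pochhammer symbols; the even and odd values of~$n$ would be handled by restricting the recurrences to each residue class modulo~$2$. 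Telescoping as above then returns the product formula.

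I expect the guessing step to be the principal obstacle. Producing a single holonomic description of $c_{n,j}$ that is valid for all $r$ at once is considerably more delicate than in Lemma~\ref{lem.R20}, and I anticipate the small-$n$ anomaly already flagged in the proof of Lemma~\ref{lem.R02}, where the Kronecker delta is absent from the matrix and the generic recurrences fail. This is exactly the reason for the hypothesis $n>2r$: in the guessing phase I would discard the data for $n\leq 2r$ as potentially inconsistent with the generic recurrences before fitting and certifying them, and only afterwards reconnect to the base case $D_{2r,0}(2r+1)=2$ through the telescoping product.
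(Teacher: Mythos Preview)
Your base-case computation $D_{2r,0}(2r+1)=2$ is correct, but the main step---running the holonomic ansatz with $r$ kept symbolic---is precisely what the paper declares out of reach. In the final paragraph of Section~\ref{sec.gen} the authors write that they ``don't see how to use this computer algebra approach to prove them for symbolic~$r$, since the extra parameter appears also in the Kronecker delta.'' The issue is structural: in the matrix underlying $D_{2r,0}(n)$ the Kronecker contribution sits on the entries with column index $j\in\{2r,\dots,n-1\}$, so the location of the $+1$'s moves with~$r$. The certificate sequence $c_{n,j}$ you would need is therefore not expected to admit a holonomic description that is polynomial in $r$; your anticipated ``small-$n$ anomaly'' is not a boundary effect to be discarded but a manifestation of this non-holonomic dependence. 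So the proposal, as written, has a genuine gap at its core.

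The paper avoids this entirely by a combinatorial reduction: using the rhombus-tiling interpretation from Section~\ref{sec.comb}, the four triangular holes for $D_{2r,0}(n)$ can be merged (after filling in uniquely-tiled lozenges) into a single triangular hole, yielding the polynomial identity
\[
  D_{2r,0}(n) = D_{0,0}(n-2r)\big|_{\mu\to\mu+6r}.
\]
Everything then follows from Proposition~\ref{prop.D00} by substituting $n\mapsto n-r$, $\mu\mapsto\mu+6r$ into the expressions for $R_{0,0}$. This route never needs a symbolic-$r$ computer proof; the parameter $r$ is absorbed by a shift of variables before any computation happens. Compared to your plan, the paper's argument is both shorter and genuinely different in kind: it trades the algebraic certificate for a bijective/geometric reduction to an already-settled case.
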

\begin{proof}
Before we start with the actual proof, we note that $D_{2r,0}(n)=1$ if
$n\leq2r$; this is a consequence of Proposition~\ref{prop.bindet}.  The
value~$1$ can also be explained combinatorially: We note that $t=0$ implies
that there is no boundary line, but the three other triangular holes are
attached directly to the corners of the central triangular hole. Moreover, the
size of these three triangles is given by~$2r$, and if their size is equal
to~$n$, they divide the tiling region into three non-connected lozenges (left
part of Figure~\ref{fig.families}).  Since there is only one way to tile a
lozenge-shaped region with rhombi, we get $D_{2r,0}(n)=1$.

If $n>2r$ and $\mu\geq2$, the situation looks similar to the one displayed in
the right part of Figure~\ref{fig.families} (for the moment, ignore the shaded
regions and the dashed line). By the previous argument, the light-gray shaded
lozenges can be tiled in a unique way, and hence they can be declared to be
holes, without changing the tiling count. This way we obtain a hexagonal
region with a single, big triangular hole. Note that it is exactly the type of
region whose cyclically symmetric rhombus tilings are counted by $D_{0,0}(n)$.

The size of this hole is $\mu-2+6r$, which is just the sum of the sizes of the
four holes. The distance from the hole to the boundary is given by
$n-2r$. Since in Family~A we have that $s-t$ is even, we are counting all
cyclically symmetric rhombus tilings (without negative weights), and hence
\[
  D_{2r,0}(n) = D_{0,0}(n-2r)\big|_{\mu\to\mu+6r}.
\]
Note that this identity actually holds for all~$\mu$, since for fixed~$n$ we
have polynomials in~$\mu$ on both sides, that agree for infinitely many
values.  The proof is completed by noting that the above expressions for
$R_{2r,0}(n)$ follow immediately from those for $R_{0,0}(n)$ in
Proposition~\ref{prop.D00} by replacing $n$ by $n-r$ and $\mu$ by $\mu+6r$.
\end{proof}

Note that Lemma~\ref{lem.R20} now follows as a special case of
Theorem~\ref{thm.famA}.  The closed form for the other members of Family~A,
namely the determinants of the form $D_{0,2r}(n)$, are obtained by combining
Theorems~\ref{thm.famA} and~\ref{thm.Dst2Dts}.

\begin{figure}
\begin{center}
\includegraphics[width=0.4\textwidth]{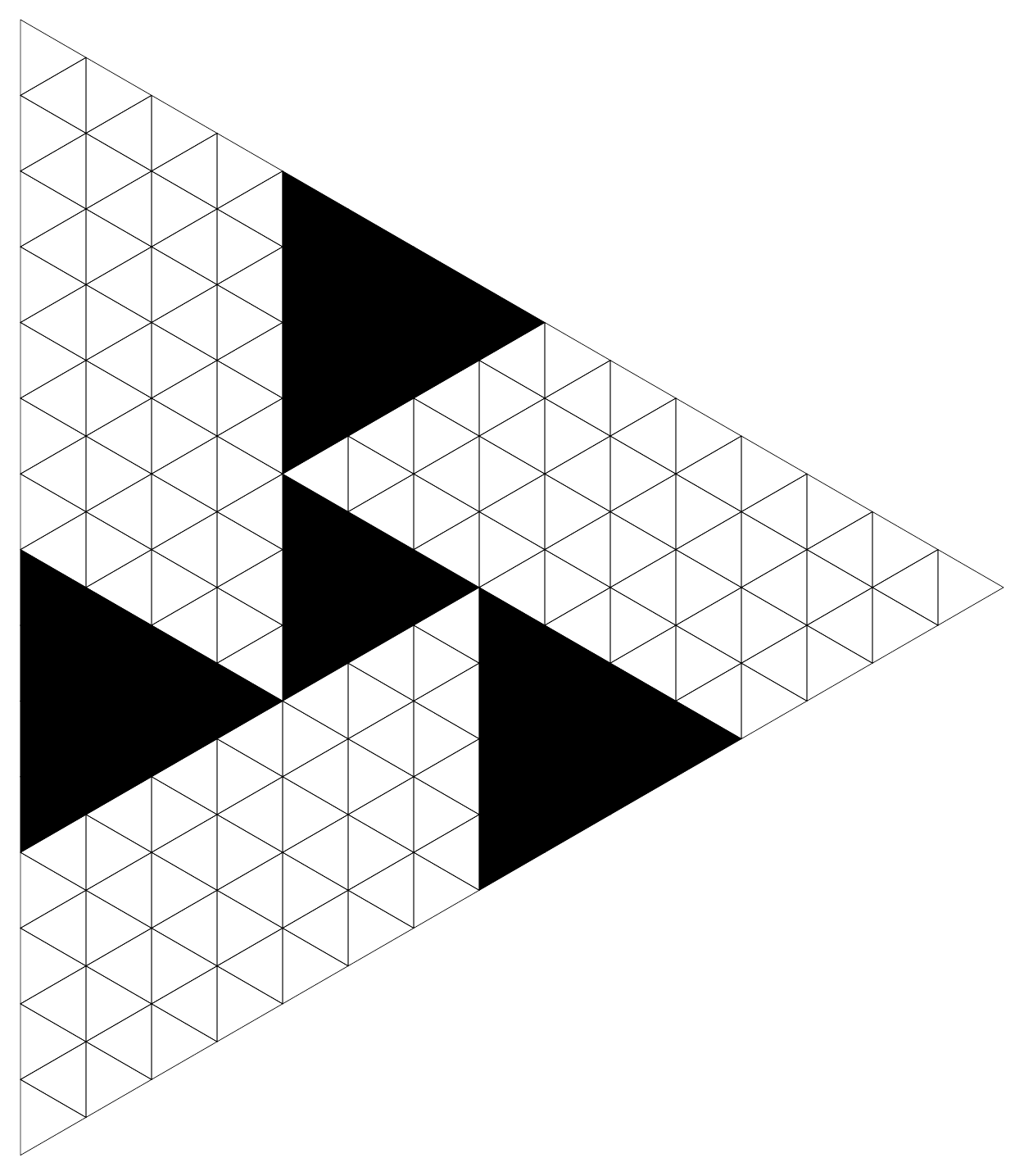}
\qquad\qquad
\includegraphics[width=0.4\textwidth]{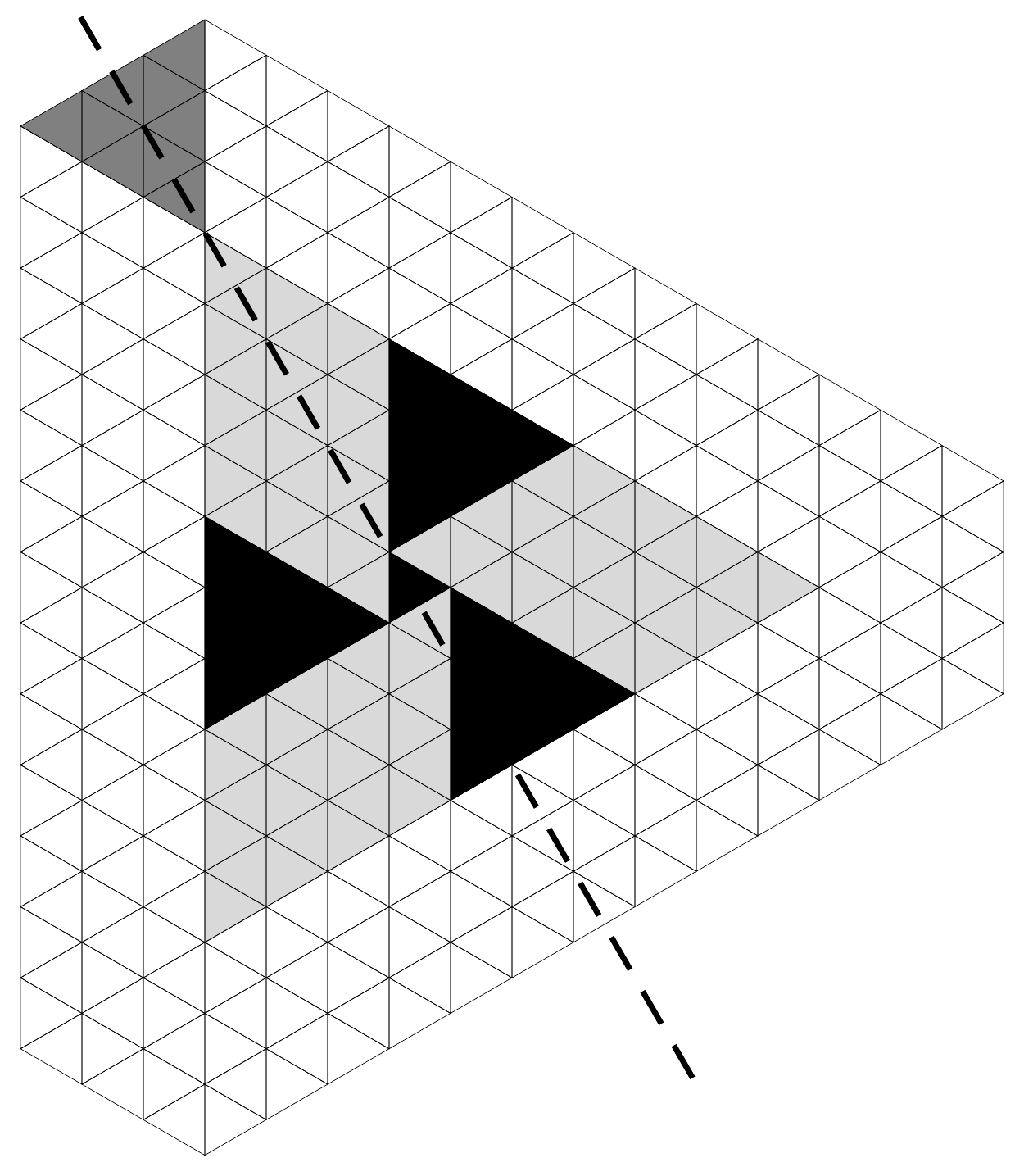}
\end{center}
\caption{Two hexagonal regions with holes, corresponding to Families~A and~B;
on the left with parameters $s=4$, $t=0$, $n=4$, $\mu=5$, on the
right with parameters $s=3$, $t=0$, $n=6$, $\mu=3$.}
\label{fig.families}
\end{figure}

\begin{thm}[Family~B]\label{thm.famB}
Let $\mu$ be an indeterminate, and let $r$ and $n$ be positive integers.  If
$n$ is an odd number, then
\[
  D_{2r-1,0}(n) = \prod_{i=r}^{(n-1)/2} \! R_{2r-1,0}(i),
\]
where
\[
  R_{2r-1,0}(n) = -\frac{
    \bigl(\mu+2n+4r-4\bigr)_{\!n-r+1} \,
    \bigl(\mu+2n+4r-3\bigr)_{\!n-r}
    \left(\frac{\mu}{2}+2n+r-\frac{1}{2}\right)^{\!2}_{\!n-r}
  }{
    \bigl(n-r+1\bigr)_{\!n-r+1} \,
    \bigl(n-r+1\bigr)_{\!n-r}
    \left(\frac{\mu}{2}+n+2r-\frac{3}{2}\right)^{\!2}_{\!n-r}
  }.
\]
Hence, for $n\geq r$ we have $R_{2r-1,0}(n) = D_{2r-1,0}(2n+1) / D_{2r-1,0}(2n-1)$.
If $n\geq 2r$ is an even number, then $D_{2r-1,0}(n)=0$.
\end{thm}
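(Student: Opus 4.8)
The plan is to adapt the proof of Theorem~\ref{thm.famA} to the ``odd'' case $s-t=2r-1$, reducing $D_{2r-1,0}(n)$ to the already-solved determinant $D_{1,0}$ of Proposition~\ref{prop.D10} rather than to $D_{0,0}$. First I would record the degenerate range $n\leq2r-1$: here there is no index with $i=j$ satisfying $\max(s,t)=2r-1\leq i<\min(s,t)+n=n$, so the Kronecker delta is absent and $D_{2r-1,0}(n)$ is a pure binomial determinant. By Proposition~\ref{prop.bindet} (with $t=0$, so the defining product is empty) this equals~$1$, which already matches the asserted odd formula at $n=2r-1$, where the product $\prod_{i=r}^{(n-1)/2}$ is empty.

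The crux is the reduction identity
\[
  D_{2r-1,0}(n) = D_{1,0}\bigl(n-2r+2\bigr)\big|_{\mu\to\mu+6r-6},
\]
valid for all $n>2r-1$. For $\mu\geq2$ I would argue combinatorially as in Family~A: the region counted by $D_{2r-1,0}(n)$ carries a central triangular hole of size $\mu-2$ together with three outer holes of size $2r-1$ attached directly at its corners (no border line, since $t=0$). Completing the forced lozenges absorbs $2r-2$ units of each outer hole into the center, enlarging the central hole by $3(2r-2)=6r-6$ and shrinking the distance to the boundary from $n$ to $n-2r+2$, while leaving three residual outer holes of size~$1$. The resulting region is exactly the one enumerated by $D_{1,0}(n-2r+2)$ with $\mu$ replaced by $\mu+6r-6$. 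As both sides are polynomials in~$\mu$ that agree for all $\mu\geq2$, the identity extends to all~$\mu$.

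Both assertions then follow from Proposition~\ref{prop.D10}. For even $n\geq2r$ the argument $n-2r+2$ is a positive even integer, so $D_{1,0}(n-2r+2)=0$ and hence $D_{2r-1,0}(n)=0$. For odd~$n$ the same proposition gives $D_{1,0}(n-2r+2)=\prod_{i=1}^{(n-2r+1)/2}R_{1,0}(i)$; substituting $\mu\to\mu+6r-6$ and re-indexing by $j=i+r-1$ converts the product into $\prod_{j=r}^{(n-1)/2}R_{2r-1,0}(j)$, once one verifies the elementary shift $R_{1,0}(j-r+1)\big|_{\mu\to\mu+6r-6}=R_{2r-1,0}(j)$, which amounts to adding $6r-6$ to~$\mu$ and $r-1$ to the index in each rising factorial.

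The main obstacle is the signed nature of the count: because $s-t=2r-1$ is odd, $D_{2r-1,0}(n)$ enumerates tilings with $\pm1$ weights determined by the parity of the visible vertical segments, so---unlike in Family~A---one cannot merely invoke unique tileability, and one must check that the hole-merging leaves precisely a size-$1$ residual (landing on $D_{1,0}$, not on $D_{0,0}$) and preserves every sign. The safest way to bypass this bookkeeping is to prove instead the single algebraic step $D_{s,0}(n)=D_{s-2,0}(n-2)\big|_{\mu\to\mu+6}$ directly from the Desnanot--Jacobi--Dodgson identity~\eqref{eq.DJD}, in the same inductive manner as Theorem~\ref{thm.Dst2Dts}, and then iterate it from $s=2r-1$ down to $s=1$; since each step lowers $s$ by~$2$, parity is preserved and one necessarily lands on $D_{1,0}$, sidestepping the sign argument entirely.
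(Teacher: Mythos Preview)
Your main argument---reduce $D_{2r-1,0}(n)$ to $D_{1,0}(n-2r+2)\big|_{\mu\to\mu+6r-6}$ through the forced-lozenge absorption and then apply Proposition~\ref{prop.D10}---is precisely the paper's proof, including the substitution $n\mapsto n-r+1$, $\mu\mapsto\mu+6r-6$ in $R_{1,0}$. The paper additionally supplies a self-contained reflection argument for the vanishing at even $n\geq2r$ (pairing tilings across a symmetry axis so that the $\pm1$ weights cancel), but remarks itself that this is subsumed by the reduction.

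One caveat on your proposed alternative: an inductive proof of $D_{s,0}(n)=D_{s-2,0}(n-2)\big|_{\mu\to\mu+6}$ via \eqref{eq.DJD} in the style of Theorem~\ref{thm.Dst2Dts} is not as clean as you suggest, because DJD applied to $D_{s,0}(n)$ drags in $D_{s,1}$ and $D_{s+1,1}$; you would therefore need the companion identity for $t=1$ (and recursively for larger~$t$), and once $s-t$ drops below~$2$ the geometry changes. The sign worry in the combinatorial route is in fact benign: the weighting segments for $D_{2r-1,0}(n)$ and for the target $D_{1,0}(n-2r+2)$ are the \emph{same} $n-2r+1$ unit segments on the same vertical line (the lower $2r-1$ segments are absorbed into the enlarged hole), so the forced-tiling bijection carries weights to weights automatically.
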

\begin{proof}
According to Proposition~\ref{prop.bindet} we have $D_{2r-1,0}(n)=1$ if
$n<2r$.  When $n$ is odd this is compatible with the asserted formula, since
in this case the product is empty.

The tiling regions corresponding to Family~B look like the ones for Family~A
(with the difference that the three outer holes have odd sizes). We first give
a combinatorial argument for the case when $n\geq 2r$ is even, i.e. the case
where the determinant vanishes.  An example for this situation is displayed on
the right part of Figure~\ref{fig.families}: By declaring the light-gray
lozenges to be holes, we get a hexagonal region with a single triangular hole,
as described before.  The difference now is that $D_{2r-1,0}(n)$ performs a
weighted count. This is the reason for the value~$0$ for even~$n$, since there
are as many tilings with weight~$+1$ as there are with weight~$-1$.  This can
be seen as follows.

In Figure~\ref{fig.families} (right picture) we identify the border of the
original lozenge-shaped region: its left vertical side starts at the top-most
vertex of the smallest black triangle. The lower $2r-1$ unit segments of this
side lie inside the black region, while each of the upper $n-2r+1$ unit
segments may or may not be covered by a rhombus when the whole region is
tiled. For a particular (cyclically symmetric) rhombus tiling, the number of
unit segments which are not crossed by a horizontal rhombus corresponds to the
cardinality of the set~$I$ in~\eqref{eq.SumOfMinors}, and hence its parity
determines whether this tiling is counted with weight $+1$ or with weight $-1$
(note that $s-t=2r-1$ is odd).

We now look at the lozenge-shaped region between the upper part of the
above-mentioned vertical line and the dark-gray shaded triangle (see the right
part of Figure~\ref{fig.families}); the tilings of this lozenge correspond to
a rectangle in which $|I|$ lattice paths connect two opposite sides.  Hence
there are also $|I|$ horizontal rhombi crossing the vertical side of the
dark-gray triangle. Inside the dark-gray triangle a rhombus tiling corresponds
to paths that start at the $|I|$ horizontal rhombi; this situation is depicted
in Figure~\ref{fig.triangle} where the start positions are shown as black
rhombi. Each path must end somewhere on the lower side of the triangle and its
last rhombus will share an edge with the boundary of the triangle. All other
segments of the lower side are crossed by rhombi (also colored black in
Figure~\ref{fig.triangle}). We see that any tiling with $|I|$ rhombi crossing
the vertical side of the triangle forces $n-2r+1-|I|$ rhombi to cross its
other side.

By considering the reflection across the dashed line in
Figure~\ref{fig.families}, one recognizes that there are as many cyclically
symmetric tilings with $|I|$ rhombi crossing the vertical side of the
dark-gray triangle as there are with $n-2r+1-|I|$ such rhombi. Hence, if
$n-2r+1$ is an odd number, the weighted count yields~$0$. Note that this
argument establishes an alternative proof of Lemma~\ref{lem.E10}.

However, to prove the full statement of the theorem, we take a different
approach (which also covers the already discussed cases). Similar to the proof
of Theorem~\ref{thm.famA}, one can reduce $D_{2r-1,0}$ to $D_{1,0}$.
$D_{2r-1,0}(n)$ corresponds to a triangular hole of size $\mu+6r-5$ whose
distance to the boundary of the hexagon is $n-2r+1$, while for $D_{1,0}(n)$ we
have a hole of size $\mu+1$ and distance $n-1$. Hence
\[
  D_{2r-1,0}(n) = D_{1,0}(n-2r+2)\big|_{\mu\to\mu+6r-6}.
\]
The proof is completed by noting that the above expression for $R_{2r-1,0}(n)$
follows immediately from the one for $R_{1,0}(n)$ in
Proposition~\ref{prop.D10} by replacing $n$ by $n-r+1$ and $\mu$ by
$\mu+6r-6$.
\end{proof}

\begin{figure}
  \begin{center}
    \includegraphics[width=0.4\textwidth]{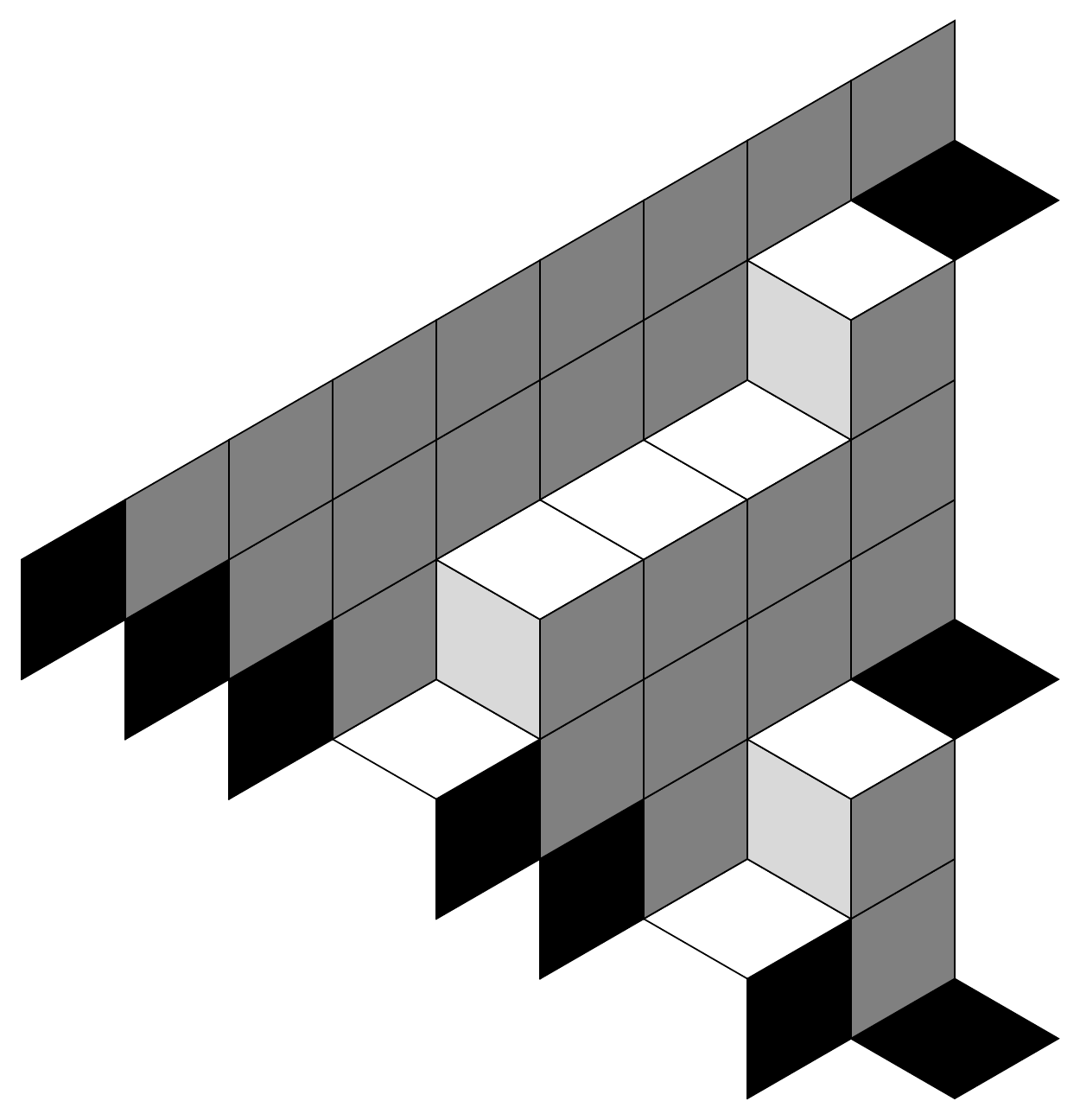}
  \end{center}
  \caption{A tiled triangular region of size~$9$ with $|I|=3$ ``paths''
    entering from the right, of lengths $7$, $3$, and $0$, respectively;
    consequently, $9-3=6$ rhombi have to cross the lower left boundary of the
    region.}
  \label{fig.triangle}
\end{figure}

The tiling regions for Families~C and~D are more complicated, and in particular
we cannot simplify the different holes and borders to a single large triangular
hole. For this reason, the proof strategy applied to Families~A and~B does not
work. So far we have not been able to come up with a proof and therefore we
state the following two formulas as conjectures.

\begin{conj}[Family~C]\label{conj.famC}
Let $\mu$ be an indeterminate and let $n$ and $r$ be positive integers.
If $n\geq r$ then
\[
  D_{2r,1}(2n) = \frac{(\mu-1) \, \bigl(\mu+2r\bigr)_{\!2r-1}}{(2r)!}
  \cdot \prod_{i=r}^{n-1} R_{2r,1}(i),
\]
where
\[
  R_{2r,1}(n) = -\frac{
    (2n+2r) (\mu+2n+2r-1) (\mu+2n+2r) \,
    \bigl(\mu+2n+4r\bigr)^{\!2}_{\!n-r}
    \left(\frac{\mu}{2}+2n+r+\frac{3}{2}\right)^{\!2}_{\!n-r+1}
  }{
    (2n+1)(2n+2)(\mu+2n+1)
    \bigl(n-r+1\bigr)^{\!2}_{\!n-r+1}
    \left(\frac{\mu}{2}+n+2r+\frac{1}{2}\right)^{\!2}_{\!n-r}
  }.
\]
Hence we have that $R_{2r,1}(n) = D_{2r,1}(2n+2) / D_{2r,1}(2n)$.
\end{conj}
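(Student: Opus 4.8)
The slick strategy used for Families~A and~B---reduce to a single large triangular hole and invoke a known evaluation---is unavailable here, as the length-one border line ($\min(2r,1)=1$) prevents the holes from merging. It is worth recording \emph{why} the other workhorse of this paper, the Desnanot--Jacobi--Dodgson identity~\eqref{eq.DJD}, also fails to isolate $D_{2r,1}(2n)$. In every instance of~\eqref{eq.DJD} that couples $D_{2r,1}$ solely to the already-settled neighbouring families, the coefficient multiplying $D_{2r,1}$ is a Family~B determinant; for instance, placing the corner at $(2r-1,0)$ yields
\[
  D_{2r-1,0}(n)\,D_{2r,1}(n-2) = D_{2r-1,0}(n-1)\,D_{2r,1}(n-1) - D_{2r,0}(n-1)\,D_{2r-1,1}(n-1),
\]
and by Theorem~\ref{thm.famB} the coefficients $D_{2r-1,0}(n)$ and $D_{2r-1,0}(n-1)$ vanish precisely when the attached $D_{2r,1}$ carries an even index. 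Thus the \emph{nice} (even-index) member always decouples, and no amount of condensation against Families~A, B, E can recover it. This diagnosis points to the correct tool.

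The plan is therefore to prove the conjecture by Zeilberger's holonomic ansatz~\cite{Zeilberger07}, exactly as in Lemmas~\ref{lem.R20} and~\ref{lem.R02}, but carrying the extra parameter~$r$ along. Fixing the orientation $s\geq t$ by Theorem~\ref{thm.Dst2Dts}, write $a_{i,j}$ for the entries of the $(2n\times 2n)$ matrix of $D_{2r,1}(2n)$, delete its last row, and let $c_{r,n,j}$ be the (normalised, last component~$1$) generator of the one-dimensional kernel of the resulting $(2n-1)\times 2n$ matrix. First I would guess, with the HolonomicFunctions package~\cite{Koutschan10b}, a holonomic description of the \emph{trivariate} sequence $c_{r,n,j}$ in the Ore algebra generated by the shifts $S_r,S_n,S_j$. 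I would then certify, by creative telescoping, the three identities that drive the ansatz: the normalisation $c_{r,n,2n}=1$, the kernel relations $\sum_{j=1}^{2n} a_{i,j}\,c_{r,n,j}=0$ for $1\leq i\leq 2n-1$, and the last-row evaluation
\[
  \sum_{j=1}^{2n} a_{2n,j}\,c_{r,n,j} = \frac{D_{2r,1}(2n)}{D_{2r,1}(2n-1)},
\]
whose right-hand side I would match against the asserted closed form divided by the (separately determined) value of $D_{2r,1}(2n-1)$.

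Because the odd-index determinant $D_{2r,1}(2n-1)$ is \emph{not} nice, the quotient on the right above is itself ugly, which makes the normalisation awkward. Two remedies suggest themselves. One is to sidestep the odd index entirely and run the ansatz on the even-index subsequence directly, certifying the two-step quotient $R_{2r,1}(n)=D_{2r,1}(2n+2)/D_{2r,1}(2n)$ via a cofactor identity that skips a row and a column; this keeps both sides nice at the cost of a less standard certificate. A second, purely algebraic route is to \emph{enlarge} the DJD network: the corner-$(2r,1)$ instance of~\eqref{eq.DJD} involves the non-nice determinants $D_{2r,2}$ and $D_{2r+1,2}$, and if one first pins these down by a holonomic-ansatz computation, Family~C follows by elimination. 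Finally, a combinatorial proof may be within reach: \eqref{eq.SumOfMinors} presents $D_{2r,1}(2n)$ as the signed sum $\sum_{I}(-1)^{|I|}\det\bigl(B^I_{I+2r-1}\bigr)$ of Lindstr\"om--Gessel--Viennot path counts, and the reflection that \emph{forced} the Family~B vanishing should, when adapted to absorb the single border segment, instead pair up all but a nice remainder and thereby \emph{evaluate} the sum.

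The hard part, in every version, is the guessing step. The authors already flag (in the remark preceding Corollary~\ref{cor.Dm11}) that they could not guess a holonomic description even for the one-parameter sequence attached to $D_{-1,1}$; here the certificate must additionally resolve the dependence on~$r$, enlarging the recurrence order and the amount of data needed to pin it down, while the ugliness of $D_{2r,1}(2n-1)$ further inflates the candidate ansatz. I expect the decisive practical question to be whether the trivariate $c_{r,n,j}$ admits a holonomic description of manageable order at all; if it does, the subsequent creative-telescoping verifications should be routine, as they were for Lemmas~\ref{lem.R20} and~\ref{lem.R02}. Should the direct guess prove infeasible, the combinatorial involution of the previous paragraph is, in my view, the most promising fallback, since it would also explain structurally why only the even-index determinants factor.
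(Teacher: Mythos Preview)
This statement is a \emph{conjecture} in the paper; there is no proof to compare against. The authors write, just before stating it: ``So far we have not been able to come up with a proof and therefore we state the following two formulas as conjectures.''

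Your diagnostic analysis---why the hole-merging trick of Families~A and~B fails, and why every application of~\eqref{eq.DJD} that links $D_{2r,1}$ to already-settled families kills the even-index member via a Family~B zero---is correct and sharper than anything the paper spells out. However, your principal remedy, running the holonomic ansatz with $r$ as a symbolic parameter, meets an obstacle that the paper itself flags at the end of Section~\ref{sec.gen}: ``we don't see how to use this computer algebra approach to prove them for symbolic~$r$, since the extra parameter appears also in the Kronecker delta.'' Concretely, after reindexing, the matrix entries contain the term $\delta_{i+2r-1,j}$, and this indicator is not holonomic in~$r$: no finite system of linear recurrences with polynomial coefficients in $r,i,j$ annihilates it. So there is no a~priori reason to expect the trivariate certificate $c_{r,n,j}$ to be holonomic, and the guessing step is not merely computationally heavy (as you suspect) but structurally ill-posed. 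The paper does remark that for each \emph{fixed} integer~$r$ the method of Lemmas~\ref{lem.R20} and~\ref{lem.R02} should succeed, so your plan would furnish proofs case by case but not a uniform one.

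Your fallbacks do not escape the same difficulty: the two-step cofactor variant and the enlarged DJD network through $D_{2r,2}$ and $D_{2r+1,2}$ still require certificates carrying symbolic~$r$, hence run into the same non-holonomicity; and the proposed sign-reversing involution is a genuinely open combinatorial problem rather than a proof sketch. In sum, what you have written is a well-reasoned proof \emph{strategy} that correctly identifies the obstructions, but not a proof---and on this point you and the paper's authors stand in the same place.
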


Note that for $n<r$ we have $D_{2r,1}(2n) = \bigl(\mu+2r-1\bigr)_{\!2n}/(2n)!$,
according to Proposition~\ref{prop.bindet}. The conjectured closed form for $D_{1,2r}(n)$
can be obtained via Theorem~\ref{thm.Dst2Dts}.

\begin{conj}[Family~D]\label{conj.famD}
Let $\mu$ be an indeterminate and let $n\geq1$ and $r\geq0$ be integers.  Then
\[
  D_{-1,2r}(2n) = \prod_{i=0}^{n-1} R_{-1,2r}(i),
\]
where
\[
  R_{-1,2r}(n) = \begin{cases}
    \displaystyle -\frac{
      \bigl(\mu+2n-1\bigr)_{\!2r} \,
      \bigl(\mu+2n-3\bigr)_{\!2r+1} \,
      \bigl(\mu+2n+4r\bigr)^{\!2}_{\!n-r}
      \left(\frac{\mu}{2}+2n+r+\frac{1}{2}\right)^{\!2}_{\!n-r-1}
    }{
      \bigl(2n+1\bigr)_{\!2r} \,
      \bigl(2n+2\bigr)_{\!2r+1} \,
      \bigl(n-r\bigr)^{\!2}_{\!n-r}
      \left(\frac{\mu}{2}+n+2r+\frac{1}{2}\right)^{\!2}_{\!n-r-1}
    }, & \text{if } n>r, \\
    \rule{0pt}{24pt}\displaystyle
    \frac{
      (3-\mu) \, \bigl(\mu+2r-2\bigr)_{\!2r} \, \bigl(\mu+2r-1\bigr)_{\!2r}
    }{
      \bigl(2r+1\bigr)_{\!2r} \, \bigl(2r+1\bigr)_{\!2r+1}
    }, & \text{if } n=r, \\
    \rule{0pt}{24pt}\displaystyle
    \frac{
      \bigl(\mu+2n-2\bigr)_{\!2r} \, \bigl(\mu+2n-1\bigr)_{\!2r}
    }{
      \bigl(2n+1\bigr)_{\!2r} \, \bigl(2n+2\bigr)_{\!2r}
    }, & \text{if } n<r.
  \end{cases}
\]
Hence, we have that $R_{-1,2r}(n) = D_{-1,2r}(2n+2) / D_{-1,2r}(2n)$.
\end{conj}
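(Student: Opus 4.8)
The combinatorial reduction that settled Families~A and~B is unavailable here, as already noted above: the holes and border lines of the Family~D region cannot be merged into a single triangular hole, so there is no analogue of $D_{2r,0}(n)=D_{0,0}(n-2r)$ or $D_{2r-1,0}(n)=D_{1,0}(n-2r+2)$. The plan is therefore to use the holonomic ansatz of Zeilberger~\cite{Zeilberger07}, exactly as in Lemmas~\ref{lem.R20} and~\ref{lem.R02}, but for the sequence indexed by both $n$ and~$r$.

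Before committing to that, I would record why the Desnanot--Jacobi--Dodgson route---so effective for Corollary~\ref{cor.Dmrmr}, Propositions~\ref{prop.D10} and~\ref{prop.D01}, and Corollary~\ref{cor.Dm11}---appears to be blocked for the even-indexed determinant $D_{-1,2r}(2n)$. Every instance of~\eqref{eq.DJD} whose corner has $s=-1$ pairs $D_{-1,2r}$ at an even size either with a determinant $D_{0,\mathrm{odd}}$ of even size, which vanishes by Theorem~\ref{thm.famB} together with the swap Theorem~\ref{thm.Dst2Dts} so that the term simply drops out, or else only with odd-size and Family~F determinants whose \emph{individual} values are themselves not evaluated---Corollary~\ref{cor.Dm11} determines only the quotient $D_{-1,1}(2n+1)/D_{-1,1}(2n)$, not the two factors separately. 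The only corners reaching $D_{-1,2r}(2n)$ with a genuinely new nonvanishing partner have $s=-2$ and involve $D_{-2,2r-1}$ and $D_{-2,2r}$, which lie in the uncharted region of the $s$-$t$ plane. In other words the whole $s=-1$ column is mutually entangled by~\eqref{eq.DJD} without a closed base, which is precisely why a standalone evaluation seems to require a direct method.

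For that direct method I would reindex $D_{-1,2r}(2n)$ as a $2n\times 2n$ matrix with $(p,q)$-entry $\delta_{p,q+2r+1}+\binom{\mu+p+q+2r-5}{q+2r-1}$, whose non-Kronecker part is a proper hypergeometric term, holonomic in $p$, $q$ and~$r$. Since the classification places Family~D with ``$D_{s,t}(2n)$ nice'' rather than in~E or~F, the two single-step quotients $D_{-1,2r}(2n)/D_{-1,2r}(2n-1)$ and $D_{-1,2r}(2n+1)/D_{-1,2r}(2n)$ are expected to be individually ``ugly''; I would nonetheless prove each of them by the step-one ansatz as in Lemma~\ref{lem.R20}---guessing the normalized cofactor sequence $c_{n,j}$, then verifying the nullspace identities $\sum_j a_{i,j}c_{n,j}=0$ and the last-row identity with HolonomicFunctions~\cite{Koutschan10b}---and finally show that the product of two consecutive quotients telescopes to $R_{-1,2r}(n)$, the irreducible parts cancelling exactly as the ugly factor does in Section~\ref{sec.nice}. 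I would carry $r$ as a symbol so that one computation settles all~$r$, expecting the three regimes $n>r$, $n=r$, $n<r$ of the statement to arise from the boundary behaviour of the recurrences, the phenomenon flagged in the proof of Lemma~\ref{lem.R02}.

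The hard part will be the guessing, not the verification. As observed in the aside preceding Corollary~\ref{cor.Dm11}, for the closely related Family~F determinant $D_{-1,1}$ no holonomic description of the cofactor sequence could be guessed at all; the $s=-1$ column is genuinely resistant, and the extra parameter~$r$ only enlarges the ansatz. My main hope is that keeping $r$ symbolic regularizes the sequence, so that recurrences with coefficients rational in $r$ and~$\mu$ exist even when a single specialization is intractable, and that the product structure together with the already-constrained leading behaviour reduces the number of unknowns enough to make guessing in $S_n$, $S_j$ and~$S_r$ feasible. If even the symbolic cofactor sequence resists a holonomic description, I would fall back on Krattenthaler's identification-of-factors method, exhibiting an explicit kernel vector of the specialized matrix for each conjectured linear factor $\mu+c$ and matching degrees and leading coefficients in~$\mu$; this too is delicate, precisely because of the shared irreducible factors that make Family~D the natural home for these determinants.
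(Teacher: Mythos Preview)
The statement you are addressing is a \emph{conjecture} in the paper, not a theorem: the authors give no proof and say explicitly that the proof strategy of Families~A and~B does not apply and that ``so far we have not been able to come up with a proof''. So there is no paper argument to compare against; the relevant question is whether your plan could succeed.

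The core of your proposal is to run Zeilberger's holonomic ansatz with $r$ carried symbolically. The paper addresses precisely this in its closing paragraph: for any \emph{fixed} integer~$r$ one may hope to imitate Section~\ref{sec.lemmas}, but the authors ``don't see how to use this computer algebra approach to prove them for symbolic~$r$, since the extra parameter appears also in the Kronecker delta''. That is a concrete obstruction, not a mere computational inconvenience. After your reindexing the $(p,q)$-entry is $\delta_{p,q+2r+1}+\binom{\mu+p+q+2r-5}{q+2r-1}$; the binomial part is proper hypergeometric in $p,q,r$, but the position of the $1$ contributed by the Kronecker delta \emph{moves} with~$r$. Consequently the normalized cofactor sequence $c_{n,j}$, which for fixed~$r$ already resisted guessing in the $s=-1$ column (the $D_{-1,1}$ aside you cite), has no reason to be holonomic jointly in $n,j,r$, and the summation identities you need to verify acquire an $r$-dependent pick-off term $c_{n,i+2r+1}$ that does not fit the creative-telescoping framework. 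Your hope that symbolic~$r$ ``regularizes'' the sequence is exactly the opposite of what the structural obstacle suggests.

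Your proposed intermediate step---proving the two ``ugly'' single-step quotients separately and then telescoping---does not sidestep this: the holonomic ansatz certifies a quotient only when that quotient is itself hypergeometric (so that the last-row identity has a closed right-hand side), whereas you yourself expect these quotients to carry an irreducible factor. And the identification-of-factors fallback is not yet a plan: the conjectured formula involves a sign, squared Pochhammer blocks, and a three-way case split in~$n$ versus~$r$, so exhibiting kernel vectors for each linear factor and matching degrees would require substantial new ideas that you have not supplied. In short, the proposal rediscovers the approaches the authors already considered and records the same impasse; it does not resolve the conjecture.
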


\begin{cor}[Family~E]\label{cor.famE}
Let $\mu$ be an indeterminate and let $n \geq r \geq 1$ be integers. Then:
\begin{align*}
  \frac{D_{2r-1,1}(2n)}{D_{2r-1,1}(2n-1)} &=
  \frac{
    \bigl(\mu+2n+4r-4\bigr)_{\!n-r+1}
    \left(\frac{\mu}{2}+2n+r-\frac{1}{2}\right)_{\!n-r}
  }{
    \bigl(n-r+1\bigr)_{\!n-r+1}
    \left(\frac{\mu}{2}+n+2r-\frac{3}{2}\right)_{\!n-r}
  },
  \\[2ex]
  \frac{D_{1,2r-1}(2n)}{D_{1,2r-1}(2n-1)} &=
  \frac{
    \bigl(\mu+2n-1\bigr)_{\!2r-2} \,
    \bigl(\mu+2n+4r-4\bigr)_{\!n-r+1}
    \left(\frac{\mu}{2}+2n+r-\frac{1}{2}\right)_{\!n-r}
  }{
    \bigl(2n+1\bigr)_{\!2r-2} \,
    \bigl(n-r+1\bigr)_{\!n-r+1}
    \left(\frac{\mu }{2}+n+2r-\frac{3}{2}\right)_{\!n-r}
  }.
\end{align*}
\end{cor}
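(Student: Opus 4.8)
The plan is to avoid the (only conjecturally evaluated) Family~C determinants by a cancellation trick, reducing everything to the already-established Families~A and~B. Concretely, I would apply the Desnanot--Jacobi--Dodgson identity~\eqref{eq.DJD} to the matrix whose upper-left corner sits at position $(2r-1,0)$, at the two sizes $m=2n$ and $m=2n+1$. Writing out \eqref{eq.DJD} with $s=2r-1$, $t=0$ gives
\[
  D_{2r-1,0}(m)\,D_{2r,1}(m-2) = D_{2r-1,0}(m-1)\,D_{2r,1}(m-1) - D_{2r,0}(m-1)\,D_{2r-1,1}(m-1).
\]
Since $n\geq r$, Theorem~\ref{thm.famB} tells us that $D_{2r-1,0}(2n)=0$. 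Substituting $m=2n$ therefore kills the left-hand side, while substituting $m=2n+1$ kills the first term on the right, leaving
\[
  D_{2r-1,1}(2n-1) = \frac{D_{2r-1,0}(2n-1)\,D_{2r,1}(2n-1)}{D_{2r,0}(2n-1)}, \qquad
  D_{2r-1,1}(2n) = -\,\frac{D_{2r-1,0}(2n+1)\,D_{2r,1}(2n-1)}{D_{2r,0}(2n)}.
\]

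The crucial observation is that the unknown Family~C factor $D_{2r,1}(2n-1)$ appears in both expressions and hence cancels in the quotient. (As an identity of rational functions in~$\mu$ this division is legitimate, because $D_{2r,1}(2n-1)$ is not the zero polynomial; the resulting equality of explicit rational functions then holds for all~$\mu$.) Dividing yields
\[
  \frac{D_{2r-1,1}(2n)}{D_{2r-1,1}(2n-1)} = -\,\frac{D_{2r-1,0}(2n+1)/D_{2r-1,0}(2n-1)}{D_{2r,0}(2n)/D_{2r,0}(2n-1)} = -\,\frac{R_{2r-1,0}(n)}{R_{2r,0}(2n-1)},
\]
where the last step invokes $R_{2r-1,0}(n)=D_{2r-1,0}(2n+1)/D_{2r-1,0}(2n-1)$ from Theorem~\ref{thm.famB} and $R_{2r,0}(2n-1)=D_{2r,0}(2n)/D_{2r,0}(2n-1)$ from Theorem~\ref{thm.famA}. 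Plugging in the explicit Pochhammer expressions for $R_{2r-1,0}$ and $R_{2r,0}$ and simplifying then produces the first asserted formula. The main simplification steps are the factorization $(\mu+2n+4r-3)_{n-r} = (\mu+2n+4r-3)\,(\mu+2n+4r-2)_{n-r-1}$, the telescoping $(n-r)_{n-r}/(n-r+1)_{n-r} = \tfrac12$, and the observation that the leftover linear factor $\mu+2n+4r-3 = 2\bigl(\tfrac{\mu}{2}+n+2r-\tfrac32\bigr)$ precisely absorbs both the stray factor of~$2$ and the extra half-integer Pochhammer factor coming from $R_{2r-1,0}$; these are routine and I would relegate them to a direct computation.

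The second formula is then immediate by exchanging $s$ and~$t$ via Theorem~\ref{thm.Dst2Dts}. Applied with $s=1$, $t=2r-1$, that theorem gives $D_{1,2r-1}(m) = \bigl(\prod_{i=0}^{2r-3}(\mu+i)_m/(i+2)_m\bigr)\,D_{2r-1,1}(m)$, so in the quotient $D_{1,2r-1}(2n)/D_{1,2r-1}(2n-1)$ the prefactor contributes $\prod_{i=0}^{2r-3}(\mu+2n+i-1)/(2n+i+1) = (\mu+2n-1)_{2r-2}/(2n+1)_{2r-2}$, which is exactly the additional factor distinguishing the second formula from the first.

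I expect the only genuine difficulty to be bookkeeping rather than anything conceptual: one must track the integer-index and half-integer-index Pochhammer symbols separately through the quotient and check that the stray factor of~$2$ and the stray linear factor cancel as claimed; and the boundary case $n=r$ (where $p=n-r=0$ makes several Pochhammer symbols empty or of negative length, and the product in Theorem~\ref{thm.famB} becomes empty) should be recorded as a separate routine verification. The conceptual heart -- and the reason this is a corollary rather than a theorem in its own right -- is simply that the conjectural Family~C determinant $D_{2r,1}(2n-1)$ drops out, so that the two proven Families~A and~B alone pin down the quotient.
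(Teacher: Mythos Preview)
Your argument is correct, but it takes a longer route than the paper's. The paper applies \eqref{eq.DJD} \emph{once}, at the corner $(2r-2,0)$ with size $2n+1$:
\[
  D_{2r-2,0}(2n+1)\,D_{2r-1,1}(2n-1) = D_{2r-2,0}(2n)\,D_{2r-1,1}(2n) - D_{2r-1,0}(2n)\,D_{2r-2,1}(2n),
\]
and the last term vanishes because $D_{2r-1,0}(2n)=0$. This yields immediately
\[
  \frac{D_{2r-1,1}(2n)}{D_{2r-1,1}(2n-1)} = \frac{D_{2r-2,0}(2n+1)}{D_{2r-2,0}(2n)} = R_{2(r-1),0}(2n),
\]
which is already the asserted Pochhammer expression with no further simplification. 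The second formula is obtained analogously from the corner $(0,2r-2)$.

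Your version instead anchors \eqref{eq.DJD} at $(2r-1,0)$ and applies it at two consecutive sizes; the Family~C determinant $D_{2r,1}(2n-1)$ then appears on both sides and must be cancelled, after which you still have to simplify $-R_{2r-1,0}(n)/R_{2r,0}(2n-1)$ by hand. So the difference is this: the paper suppresses the unwanted Family~C term by arranging for it to be \emph{multiplied by zero} (it sits next to $D_{2r-1,0}(2n)$), whereas you suppress it by arranging for it to \emph{cancel in a quotient}. Both are legitimate; the paper's choice of corner buys a one-line proof with no bookkeeping, while your approach has the appealing feature of making explicit that the Family~C value is genuinely irrelevant (it drops out identically, not just because of a fortuitous zero factor). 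Your use of Theorem~\ref{thm.Dst2Dts} for the second formula is also fine and arguably more transparent than the paper's parallel DJD computation.
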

\begin{proof}
Using Theorems~\ref{thm.famA} and~\ref{thm.famB}, we can express the above
quotients in terms of known determinants, by using the Desnanot-Jacobi-Dodgson
identity~\eqref{eq.DJD}:
\vspace{-8pt}
\[
  D_{2r-2,0}(2n+1)D_{2r-1,1}(2n-1) = D_{2r-2,0}(2n)D_{2r-1,1}(2n) 
  -\cancelto{0}{D_{2r-1,0}(2n)}D_{2r-2,1}(2n),
\]
where $D_{2r-1,0}(2n)=0$ only if $n \geq r$. Therefore
\[
  \frac{D_{2r-1,1}(2n)}{D_{2r-1,1}(2n-1)} = \frac{D_{2r-2,0}(2n+1)}{D_{2r-2,0}(2n)}
  \qquad (n\geq r).
\]
The following fact can be derived similarly:    
 \[
  \frac{D_{1,2r-1}(2n)}{D_{1,2r-1}(2n-1)} = \frac{D_{0,2r-2}(2n+1)}{D_{0,2r-2}(2n)}
  \qquad (n\geq r).
\]
\end{proof}

\begin{cor}[Family~F]\label{cor.famF}
Let $\mu$ be an indeterminate and let $n \geq r \geq 1$ be integers. Then:
\[
  \frac{D_{-1,2r-1}(2n+1)}{D_{-1,2r-1}(2n)} =
  \frac{
    2 \, \bigl(\mu+2n-2\bigr)_{\!2r} \,
    \bigl(\mu+2n+4r-2\bigr)_{\!n-r-1}
    \left(\frac{\mu}{2}+2n+r-\frac{1}{2}\right)_{\!n-r}
  }{
    \bigl(2n\bigr)_{\!2r} \,
    \bigl(n-r+1\bigr)_{\!n-r}
    \left(\frac{\mu}{2}+n+2r-\frac{1}{2}\right)_{\!n-r-1}
  }.
\]
\end{cor}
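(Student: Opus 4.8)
The plan is to imitate the proofs of Corollary~\ref{cor.Dm11} (the special case $r=1$) and of Corollary~\ref{cor.famE}: apply the Desnanot-Jacobi-Dodgson identity~\eqref{eq.DJD} once, exploit a vanishing result to discard one of the two products on its right-hand side, and thereby reduce the Family-F quotient to a quotient of determinants whose closed forms are already at our disposal.

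Concretely, I would instantiate~\eqref{eq.DJD} with $s=-1$, $t=2r-1$, and matrix size $2n+1$, obtaining
\[
  D_{-1,2r-1}(2n+1)\,D_{0,2r}(2n-1) = D_{-1,2r-1}(2n)\,D_{0,2r}(2n) - D_{0,2r-1}(2n)\,D_{-1,2r}(2n).
\]
The key point is that the last term vanishes whenever $n\geq r$: by Theorem~\ref{thm.Dst2Dts} the determinant $D_{0,2r-1}(n)$ is a product of Pochhammer symbols times $D_{2r-1,0}(n)$, and by Theorem~\ref{thm.famB} the even-index factor $D_{2r-1,0}(2n)$ is zero as soon as $2n\geq 2r$. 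This is precisely where the hypothesis $n\geq r$ is used. After cancelling $D_{0,2r-1}(2n)D_{-1,2r}(2n)$ and dividing through, I arrive at the clean reduction
\[
  \frac{D_{-1,2r-1}(2n+1)}{D_{-1,2r-1}(2n)} = \frac{D_{0,2r}(2n)}{D_{0,2r}(2n-1)}.
\]

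It then remains to evaluate the right-hand side, for which all ingredients are available: $(s,t)=(0,2r)$ lies in Family~A (after swapping $s$ and $t$), so Theorem~\ref{thm.Dst2Dts} with $s=0$, $t=2r$ gives $D_{0,2r}(n) = \bigl(\prod_{i=0}^{2r-1}(\mu+i-1)_n/(i+1)_n\bigr)\,D_{2r,0}(n)$. Taking the quotient at $2n$ and $2n-1$, each factor of the product telescopes via $(\mu+i-1)_{2n}/(\mu+i-1)_{2n-1}=\mu+i+2n-2$ and $(i+1)_{2n-1}/(i+1)_{2n}=1/(i+2n)$, which collapses to $(\mu+2n-2)_{2r}/(2n)_{2r}$. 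For $n>r$ both $D_{2r,0}(2n)$ and $D_{2r,0}(2n-1)$ lie in the non-trivial regime of Theorem~\ref{thm.famA}, so their quotient equals $R_{2r,0}(2n-1)$, and hence
\[
  \frac{D_{0,2r}(2n)}{D_{0,2r}(2n-1)} = \frac{(\mu+2n-2)_{2r}}{(2n)_{2r}}\cdot R_{2r,0}(2n-1).
\]

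The main (and only genuinely delicate) point will be the final Pochhammer bookkeeping. First, the leading factor~$2$ in the claimed formula is produced by the identity $(n-r)_{n-r} = \tfrac12\,(n-r+1)_{n-r}$, valid for $n>r$, which converts the denominator $(n-r)_{n-r}$ of $R_{2r,0}(2n-1)$ into the $(n-r+1)_{n-r}$ appearing in the statement. Second, the boundary case $n=r$ must be handled separately: there $2n-1=2r-1$ and $2n=2r$ both fall into the trivial regime $D_{2r,0}(\,\cdot\,)=1$ of Proposition~\ref{prop.bindet}, so the Family-A relation $D_{2r,0}(2n)/D_{2r,0}(2n-1)=R_{2r,0}(2n-1)$ is \emph{not} valid at $n=r$; instead $D_{2r,0}(2r)/D_{2r,0}(2r-1)=1$, and one checks directly that the asserted expression, read with the negative-index Pochhammer conventions, still evaluates to $(\mu+2r-2)_{2r}/(2r)_{2r}$ at $n=r$. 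Apart from this case distinction, the remaining simplifications are purely mechanical.
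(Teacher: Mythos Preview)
Your proof is correct and follows exactly the paper's own approach: apply \eqref{eq.DJD} at $(s,t)=(-1,2r-1)$, kill the term $D_{0,2r-1}(2n)D_{-1,2r}(2n)$ via the vanishing in Theorem~\ref{thm.famB} (through Theorem~\ref{thm.Dst2Dts}), and reduce to $D_{0,2r}(2n)/D_{0,2r}(2n-1)$, which is then evaluated from Theorems~\ref{thm.Dst2Dts} and~\ref{thm.famA}. You are in fact more careful than the paper, which omits the Pochhammer bookkeeping and the separate check of the boundary case $n=r$ (where $D_{2r,0}(2n-1)$ and $D_{2r,0}(2n)$ fall into the trivial regime); both of your verifications are correct.
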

\begin{proof}
Using Theorems~\ref{thm.famA} and~\ref{thm.famB}, we can express the quotient
in terms of known determinants, by using~\eqref{eq.DJD}:
\vspace{-8pt}
\[
  D_{-1,2r-1}(2n+1)D_{0,2r}(2n-1) = D_{-1,2r-1}(2n)D_{0,2r}(2n)
  -\cancelto{0}{D_{0,2r-1}(2n)}D_{-1,2r}(2n),
\]
where $D_{0,2r-1}(2n)=0$ only if $n\geq r$. Therefore
\[
  \frac{D_{-1,2r-1}(2n+1)}{D_{-1,2r-1}(2n)} = \frac{D_{0,2r}(2n)}{D_{0,2r}(2n-1)}
  \qquad (n\geq r).
\]
\end{proof}

\begin{conj}\label{conj.reci}
There is a combinatorial reciprocity between determinants $D_{s,t}(n)$ which
just count cyclically symmetric rhombus tilings (the case when $s-t$ is even)
and determinants $D_{s,t}(n)$ which perform a weighted count (the case when
$s-t$ is odd). For example, we conjecture that
\[
  D_{2r-1,0}(2n+1) = D_{0,0}(2n-2r+2)\big|_{\mu\to 1-\mu-6n}
\]
for $n\geq r\geq1$. Note that, when setting $r,n,\mu$ to concrete integers, at
least one of the two determinants does not allow the combinatorial
interpretation given in Section~\ref{sec.comb}, for instance, because the hole
is larger than the hexagon.
\end{conj}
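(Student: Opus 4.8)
The plan is to establish the displayed example identity $D_{2r-1,0}(2n+1) = D_{0,0}(2n-2r+2)\big|_{\mu\to 1-\mu-6n}$ (for $n\geq r\geq1$) by algebraic means, using the closed forms already derived, rather than by a tiling bijection. A direct combinatorial proof looks problematic: as noted after the conjecture, for concrete integer data one of the two regions usually fails to admit the interpretation of Section~\ref{sec.comb}, so there is no pair of genuine tiling regions to biject. Since for fixed $n,r$ both sides are polynomials in $\mu$, it suffices to prove the identity as an equality of rational functions in~$\mu$, and for this we have explicit product formulas on each side.

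First I would reduce to the base case $r=1$. In the proof of Theorem~\ref{thm.famB} we established $D_{2r-1,0}(N)=D_{1,0}(N-2r+2)\big|_{\mu\to\mu+6r-6}$; taking $N=2n+1$ and setting $m:=n-r+1$ rewrites the left-hand side as $D_{1,0}(2m+1)\big|_{\mu\to\mu+6r-6}$. Granting the base case $D_{1,0}(2m+1)=D_{0,0}(2m)\big|_{\mu\to 1-\mu-6m}$, one then substitutes $\mu\to\mu+6r-6$ and composes the two affine shifts; a short computation (using $6m=6n-6r+6$, so that $7-\mu-6r-6m=1-\mu-6n$, and $2m=2n-2r+2$) turns this into exactly $D_{0,0}(2n-2r+2)\big|_{\mu\to 1-\mu-6n}$. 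Thus the entire family collapses to a single reciprocity between $D_{1,0}$ and~$D_{0,0}$.

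For that base case I would insert the product formulas from Propositions~\ref{prop.D10} and~\ref{prop.D00}, namely $D_{1,0}(2m+1)=\prod_{i=1}^{m}R_{1,0}(i)$ and $D_{0,0}(2m)=2\prod_{i=1}^{2m-1}R_{0,0}(i)$, splitting the latter into its odd- and even-indexed factors $R_{0,0}(2j-1)$ and $R_{0,0}(2j)$. Applying the substitution $\mu\to 1-\mu-6m$ to the second product reduces everything to an identity between two explicit finite products of Pochhammer symbols. The tool that makes these match is the reflection formula $(x)_k=(-1)^k\,(1-x-k)_k$: the affine map $\mu\mapsto 1-\mu-6m$ acts on each Pochhammer symbol by reflecting its argument, and I expect it to convert the substituted product built from the $R_{0,0}(i)$ into the product built from the $R_{1,0}(i)$, the accumulated signs $(-1)^k$ being absorbed by the global minus sign in $R_{1,0}(i)$.

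The main obstacle is the bookkeeping in this last step. One must track how the integer-shift symbols such as $(\mu+2i)_i$ and the half-integer symbols such as $\bigl(\tfrac{\mu}{2}+2i+\tfrac12\bigr)_{i-1}$ respond to $\mu\mapsto 1-\mu-6m$ (the half-integer arguments acquire their own shifts and signs), and one must check that reflection reverses the index ranges so that the length-$(2m-1)$ product on the $D_{0,0}$ side realigns with the length-$m$ product on the $D_{1,0}$ side. Reconciling every sign, argument shift, and index reversal is the delicate part; once the two products are shown to coincide as rational functions, the polynomial identity follows, since both determinants are polynomials in~$\mu$. As a sanity check before the general manipulation I would verify the smallest case $m=1$ (i.e.\ $r=n=1$), where $D_{1,0}(3)=-(\mu+2)$ and $D_{0,0}(2)\big|_{\mu\to-5-\mu}=(\mu+3)\big|_{\mu\to-5-\mu}=-(\mu+2)$ indeed agree.
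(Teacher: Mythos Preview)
The paper offers no proof of this statement: it is explicitly labelled a \emph{conjecture} and left open. So there is nothing to compare your attempt against; rather, you are proposing a proof where the authors gave none.

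Your plan is sound and, if carried out, would settle the displayed identity. The reduction to $r=1$ via the relation $D_{2r-1,0}(N)=D_{1,0}(N-2r+2)\big|_{\mu\to\mu+6r-6}$ from the proof of Theorem~\ref{thm.famB} is correct, and your composition of the two affine substitutions is right. For the base case $D_{1,0}(2m+1)=D_{0,0}(2m)\big|_{\mu\to 1-\mu-6m}$, both sides have fully proven closed product forms (Propositions~\ref{prop.D10} and~\ref{prop.D00}), so the question is a concrete identity between two finite products of Pochhammer symbols, and the reflection rule $(x)_k=(-1)^k(1-x-k)_k$ is exactly the tool needed. In other words, the ``conjectured'' example is in fact a consequence of results already established in the paper; the authors seem to have left it as a conjecture because their emphasis is on the broader (and vaguer) combinatorial reciprocity phenomenon, for which no bijective mechanism is proposed.

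The only real gap in your write-up is that you stop short of actually performing the Pochhammer bookkeeping: you say you ``expect'' the reflected $R_{0,0}$ product to match the $R_{1,0}$ product. To turn this into a proof you must carry out that computation. A clean way to organise it is to verify the telescoping ratio: show that
\[
  \frac{D_{0,0}(2m)}{D_{0,0}(2m-2)}\Big|_{\mu\to1-\mu-6m}
  \;=\;
  R_{0,0}(2m-2)\,R_{0,0}(2m-1)\Big|_{\mu\to1-\mu-6m}\cdot
  \frac{D_{0,0}(2m-2)\big|_{\mu\to1-\mu-6m}}{D_{0,0}(2m-2)\big|_{\mu\to1-\mu-6(m-1)}}
\]
equals $R_{1,0}(m)$; this is a single rational-function identity in $m$ and $\mu$ that can be checked directly (by hand or by computer), and together with the initial value $m=1$ that you already verified, it gives the base case by induction. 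Once that is written down, the displayed identity is a theorem.
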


We would like to point out that special instances (setting the parameter~$r$
to a concrete integer) of the results presented in this last section, in
particular Conjectures~\ref{conj.famC} and~\ref{conj.famD}, may be provable in
the same manner as the results in Section~\ref{sec.lemmas}.  However, we don't
see how to use this computer algebra approach to prove them for symbolic~$r$,
since the extra parameter appears also in the Kronecker delta.

These conjectures are along the same line as Conjecture~37 in \cite[page
  50]{Krattenthaler05}, which, for the same reason, we have not been able to
prove in~\cite{KoutschanThanatipanonda13}.  In this related family of
determinants, the Kronecker delta is multiplied by~$-1$. Obviously, they count
the same kind of objects, but total count vs. weighted count change their
roles.  It would be worthwhile to investigate the connections between these
determinants and our determinant~$D_{s,t}(n)$, in the spirit of
Conjecture~\ref{conj.reci}.  First experiments suggest that also the
determinants $\tilde{D}_{s,t}(n)$ with negative Kronecker delta comprise
several infinite families that have nice evaluations or quotients. The
analysis of those should not be too different from what we did in the present
paper. Another interesting direction of research would be to find $q$-analogs
of all these determinants.

\paragraph{Acknowledgment.}
We are grateful to Christian Krattenthaler for helpful comments on an earlier
draft of this paper, and for explaining some of the combinatorial background
to us, and to Elaine Wong for proofreading and useful suggestions.  We thank
the anonymous referees for their reports that helped us improve the paper
significantly.

\bibliographystyle{plain}

\end{document}